\documentclass{zariski}

\title{A Foundation for Synthetic Stone Duality}

\begin{document}

\author{Felix Cherubini$^1$, Thierry Coquand$^1$, Freek Geerligs$^1$ and Hugo Moeneclaey$^1$}
\date{
  $^1$University of Gothenburg and Chalmers University of Technology \\[2ex]%
  \today
}

\maketitle
\begin{abstract}
  The language of homotopy type theory has proved to be appropriate as an internal language for various higher toposes, 
for example with Synthetic Algebraic Geometry for the Zariski topos.
In this paper we apply such techniques to the higher topos corresponding to the light condensed sets 
of Dustin Clausen and Peter Scholze.
This seems to be an appropriate setting to develop synthetic topology, similar to the work of 
Martín Escardó.
To reason internally about light condensed sets, we use homotopy type theory extended with 4 axioms.
Our axioms are strong enough to prove Markov's principle, LLPO and the negation of WLPO. 
We also define a type of open propositions, inducing a topology on any type. 
This leads to a synthetic topological study of (second countable)
Stone and compact Hausdorff spaces. 
Indeed all functions are continuous in the sense that they respect this induced topology, 
and this topology is as expected for these classes of types.
For example, any map from the unit interval to itself is continuous in the usual epsilon-delta sense.
We also use the synthetic homotopy theory 
given by the higher types of homotopy type theory to define and 
work with cohomology.
As an application, we prove Brouwer's fixed-point theorem
internally. 

\end{abstract}

\section*{Acknowledgements}
The idea to use the topological characterization of stone spaces as totally disconnected, compact Hausdorff spaces to prove \Cref{stone-sigma-closed} was explained to us by Martín Escardó.
We profited a lot from a discussion with Reid Barton and Johann Commelin. 
David Wärn noticed that Markov's principle (\Cref{MarkovPrinciple}) holds. 
At TYPES 2024, we had an interesting discussion with Bas Spitters on the topic of the article.
Work on this article was supported by the ForCUTT project, ERC advanced grant 101053291.

\section*{Introduction}
The language of homotopy type theory is a  dependent type theory enriched with the univalence axiom and higher inductive types. It has proven exceptionnally well-suited to
develop homotopy theory in a synthetic way \cite{hott}. It also provides
the precision needed to analyze categorical models of type theory \cite{vanderweide2024}.
Moreover, the arguments in this language can be rather directly represented in proof assistants. We use homotopy type theory to give a synthetic development of topology, which is analogous to the work on synthetic algebraic geometry \cite{draft}. 

We introduce 
four axioms which seem sufficient for expressing and proving basic notions of topology, based on the light condensed
sets approach, introduced in \cite{Scholze}.
Interestingly, this development establishes strong connections with constructive mathematics \cite{Bishop},
particularly constructive reverse mathematics \cite{ReverseMathsBishop,HannesDiener}. Several of Brouwer's principles, such that
any real function on the unit interval is continuous, or the celebrated fan theorem, are consequences of this system
of axioms. Furthermore, we can also prove principles that are not intuitionistically valid, such as Markov's Principle,
or even the so-called Lesser Limited Principle of Omniscience, a principle well studied in constructive reverse mathematics,
which is {\em not} valid effectively.

This development also aligns closely with the program of Synthetic
Topology \cite{SyntheticTopologyEscardo,SyntheticTopologyLesnik}:
there is a dominance of open propositions, providing any type with an intrinsic
topology, and we capture in this way synthetically the notion of (second-countable) compact Hausdorff spaces.
While working on this axiom system, we learnt about the related work \cite{bc24}, which provides a different axiomatisation
at the set level. We show that some of their axioms are consequences of our axiom system. In particular, we can introduce
in our setting a notion of ``Overtly Discrete'' spaces, dual in some way to the notion of compact Hausforff spaces, like
in Synthetic Topology\footnote{We actually have a
derivation of their ``directed univalence'', but this will be presented in a following paper.}.

A central theme of homotopy type theory is that the notion of {\em type} is more general than the notion of {\em set}. We illustrate
this theme here as well: we can form in our setting the types of Stone spaces and of compact Hausdorff spaces
(types which don't form a set but a groupoid),
and show these types are
closed under sigma types. It would be impossible to formulate such properties in the setting of simple type or set theory.
Additionally, leveraging the elegant definition of cohomology groups in homotopy type theory \cite{hott}, which relies
on higher types that are not sets, we prove, in a purely axiomatic way,
a special case of a theorem of Dyckhoff \cite{dyckhoff76}, describing
the cohomology of compact Hausdorff spaces. This characterisation also supports a type-theoretic proof of
Brouwer's fixed point theorem, similar to the proof in \cite{shulman-Brouwer-fixed-point}. In our setting the theorem can be formulated in the usual
way, and not in an approximated form.

It is important to stress that what we capture in this axiomatic way are the properties of light condensed
sets that are {\em internally} valid. David W\"arn \cite{warn2024} has proved that an important property of abelian
groups in the setting of light condensed sets, is {\em not} valid internally and thus cannot be proved in this axiomatic context.
We believe however that our axiom system can be convenient for proving the results that are internally valid, as we hope
is illustrated by the present paper. We also conjecture that the present axiom system is actually {\em complete}
for the properties that are internally valid. Finally, we think that this system can be justified in a constructive metatheory
using the work \cite{CRS21}.

\section{Stone duality}
\subsection{Preliminaries}
%
\begin{remark}
  For $X$ any type, a subtype $U$ is a family of propositions over $X$. 
  We write $U\subseteq X$.
  If $X$ is a set, we call $U$ a subset. Given $x:X$ we sometimes write $x\in U$ instead of $U(x)$. 
  For subtypes $A,B\subseteq X$, we write $A\subseteq B$ for pointwise implication.
  We will freely switch between subtypes $U\subseteq X$ 
  and the corresponding embeddings
  $
    \sum_{x:X}U(x) \hookrightarrow  X.
  $
In particular, if we write $x: U$ we mean $x:X$ such that $U(x)$.
\end{remark}
\begin{definition}
  A type is countable if and only if it is 
  merely equal to some decidable subset of $\N$. 
\end{definition}

\begin{definition}
  For $I$ a set we write $2[I]$ for the free Boolean algebra on $I$.
  A Boolean algebra $B$ is countably presented,
  if there exist countable sets $I,J$, 
  generators $g_i:C,~{i\in I}$ and relations $f_j:2[I],~{j\in J}$ 
  such that $g$ induces an equivalence between $B$ and $2[I]/(f_j)_{j:J}$.
\end{definition} 

\begin{remark}\label{BooleAsCQuotient}
Any countably presented algebra is merely of the form 
$2[\N]/ (r_n)_{n:\N}$.
\end{remark}


\begin{remark}
  We denote the type of countably presented Boolean algebras by $\Boole$. 
  This type does not depend on a choice of universe. 
  Moreover $\Boole$ has a natural category structure. 
\end{remark}

\begin{example}
  If both the set of generators and relations are empty, we have the Boolean algebra $2$.
  Its underlying set is $\{0,1\}$ and $0\neq_2 1$.
  $2$ is initial in $\Boole$. 
\end{example}
%
\begin{definition}
  For $B$ a countably presented Boolean algebra, 
  we define $Sp(B)$ as the set of Boolean morphisms from $B$ to $2$.
  Any type which is merely equivalent to a type of the form $Sp(B)$ is called a Stone space.
\end{definition}

\begin{example}
  \label{boolean-algebra-examples}
  \item 
  \begin{enumerate}[(i)]
  \item There is only one Boolean morphism from $2$ to $2$, thus $Sp(2)$ is the singleton type $\top$. 
  \item   
    The tivial Boolean algebra is given by $2/(1)$. 
    We have $0=1$ in the trivial Boolean algebra, so  
    there cannot be a map from it into $2$ preserving both $0$ and $1$.
    So the corresponding Stone space is the empty type $\bot$.
  \item\label{ExampleBAunderCantor}   
    The type $Sp(2[\N])$ is called the Cantor space. It is equivalent to the set of binary sequences $2^\N$. 
    If $\alpha:Sp(2[\N])$ and $n:\N$ we write $\alpha_n $ for $\alpha(g_n)$. 
  \item\label{ExampleBAunderNinfty}
    We denote by $B_\infty$ the Boolean algebra generated by 
    $(g_n)_{n:\N}$ quotiented by the relations $g_m \wedge g_n = 0$ for ${n\neq m}$.
    A morphism $B_\infty\to 2$ corresponds to a function 
    $\mathbb N \to 2$ that hits $1$ at most once. 
    We denote $Sp(B_\infty)$ by $\Noo$. 
    For $\alpha:\Noo$ and $n:\N$ we write $\alpha_n$ for $\alpha(g_n)$. 
  By conjunctive normal form, 
  any element of $B_\infty$ can be written uniquely as 
  $\bigvee_{i:I} g_n$ or as $\bigwedge_{i:I} \neg g_n$ for some finite $I\subseteq \N$. 
  \end{enumerate}
\end{example}

\begin{lemma}\label{ClosedPropAsSpectrum}
  For $\alpha:2^\N$, we have an equivalence of propositions: 
 \[ 
    (\forall_{n:\N} \alpha_n = 0 )\leftrightarrow Sp(2/(\alpha_n)_{n:\N}).
  \] 
\end{lemma}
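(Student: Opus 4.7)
The plan is to unfold the right-hand side using the universal property of the Boolean-algebra quotient together with the initiality of $2$ in $\Boole$.

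First I would argue that $Sp(2/(\alpha_n)_{n:\N})$ really is a proposition, so that the bi-implication makes sense. Since $2$ is initial in $\Boole$ (the previous example), the quotient $2/(\alpha_n)_{n:\N}$ receives a unique morphism from $2$, and dually any two morphisms from $2/(\alpha_n)_{n:\N}$ to $2$ must agree on the image of $2$, which generates the quotient. Hence $Sp(2/(\alpha_n)_{n:\N})$ has at most one element.

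Next I would use the universal property of the quotient: a Boolean morphism $2/(\alpha_n)_{n:\N} \to 2$ is the same datum as a Boolean morphism $\phi : 2 \to 2$ such that $\phi(\alpha_n) = 0$ for every $n : \N$. By initiality of $2$, the only such $\phi$ is the identity, so the side condition becomes $\alpha_n = 0$ in $2$ for every $n : \N$. This simultaneously gives both directions of the equivalence: if all $\alpha_n$ vanish then the identity on $2$ descends to the quotient and witnesses $Sp(2/(\alpha_n)_{n:\N})$; conversely, any point of $Sp(2/(\alpha_n)_{n:\N})$ forces $\alpha_n = 0$ for all $n$.

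There is no serious obstacle here; the only thing to be careful about is treating $Sp(2/(\alpha_n)_{n:\N})$ as a proposition (justified above) and correctly invoking the universal property of quotient Boolean algebras, remembering that $\alpha_n$ lives in $2$ and hence is already interpreted tautologically under the initial morphism $2 \to 2$.
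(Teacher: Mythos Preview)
Your argument is correct and is essentially the same as the paper's proof: both reduce $Sp(2/(\alpha_n)_{n:\N})$ via the universal property of the quotient to the condition that the unique Boolean morphism $2\to 2$ sends each $\alpha_n$ to $0$, which is just $\alpha_n=0$. You have merely been more explicit about why the right-hand side is a proposition and about invoking the quotient's universal property, whereas the paper compresses all of this into one sentence.
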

\begin{proof}
  There is only one boolean morphism $x:2\to 2$, and it satisfies 
  $x(\alpha_n) = 0$ for all $n:\N$ if and only if
  $\alpha_n = 0$ for all $n:\N$. 
\end{proof}


\subsection{Axioms}\label{Axioms}
\begin{axiom}[Stone duality]\label{AxStoneDuality}
  For any $B:\Boole$, 
  the evaluation map $B\rightarrow  2^{Sp(B)}$ is an isomorphism.
\end{axiom} 


\begin{axiom}[Surjections are formal surjections]\label{SurjectionsAreFormalSurjections}
  For $g:B\to C$ a map in $\Boole$, $g$ is injective if and only if
  $(-)\circ g: Sp(C) \to Sp(B)$ is surjective. 
\end{axiom} 
\begin{axiom}[Local choice]\label{AxLocalChoice}
  Whenever we have $B:\Boole$, and some type family $P$ over $Sp(B)$ with 
  $\Pi_{s:Sp(B)} \propTrunc{P(s)}$, then there 
  merely exists some $C:\Boole$ and surjection $q:Sp(C)\to Sp(B)$ with 
$  \Pi_{t:Sp(C)} P(q(t))$.
\end{axiom}

\begin{axiom}[Dependent choice]\label{axDependentChoice}
Given types $(E_n)_{n:\N}$ with for all $n:\N$ a surjection $E_{n+1}\twoheadrightarrow E_n$, the projection from the sequential limit $\lim_kE_k$ to $E_0$ is surjective.
\end{axiom}

\subsection{Anti-equivalence of $\Boole$ and $\Stone$}
By \Cref{AxStoneDuality}, $Sp$ is an embedding of $\Boole$ into any universe of types. 
We denote its image by $\Stone$. 

\begin{remark}\label{SpIsAntiEquivalence}
Stone spaces will take over the role of affine scheme from \cite{draft}, 
and we repeat some results here. 
Analogously to Lemma 3.1.2 of \cite{draft}, 
for $X$ Stone, Stone duality tells us that $X = Sp(2^X)$. 
Proposition 2.2.1 of \cite{draft} now says that 
$Sp$ gives a natural equivalence 
\[
   Hom_{\Boole} (A, B) = (Sp(B) \to Sp(A))
\]
%
$\Stone$ also has a natural category structure.
By the above and Lemma 9.4.5 of \cite{hott}, 
the map $Sp$ defines a dual equivalence of categories between $\Boole$ and $\Stone$.
In particular the spectrum of any colimit in $\Boole$ is the limit of 
the spectrum of the opposite diagram. 
\end{remark}
\begin{remark}\label{LocalChoiceSurjectionForm}
  \Cref{AxLocalChoice} can also be formulated as follows:
  whenever we have $S:\Stone$, $E,F$ arbitrary types, a map $f:S \to F$ and a 
  surjection $e:E \twoheadrightarrow F$, 
  there exists a Stone space $T$, a surjective map 
  $T\twoheadrightarrow S$ and an arrow $T\to E$ making the following diagram commute:
    \[\begin{tikzcd}
      T \arrow[d,dashed, two heads ] \arrow[r,dashed]&  E \arrow[d,""',two heads, "e"]\\
      S  \arrow[r, swap,"f"] & F
    \end{tikzcd}\]  
\end{remark}

\begin{lemma}\label{SpectrumEmptyIff01Equal}
  For $B:\Boole$, we have $0=_B1$ if and only if $\neg Sp(B)$.
\end{lemma}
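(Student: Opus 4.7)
The statement decomposes into two short directions; the forward implication is immediate and the reverse is where \Cref{AxStoneDuality} does the work.

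For the forward direction ($0 =_B 1 \Rightarrow \neg Sp(B)$), my plan is to assume some $\phi : Sp(B)$, i.e., a Boolean morphism $\phi : B \to 2$. Since $\phi$ preserves $0$ and $1$, applying $\phi$ to the equation $0 =_B 1$ yields $0 =_2 1$, which contradicts the fact noted in the first item of \Cref{boolean-algebra-examples} that $0 \neq_2 1$. Hence $Sp(B)$ is empty.

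For the reverse direction ($\neg Sp(B) \Rightarrow 0 =_B 1$), the idea is to use Stone duality to transport the question to $2^{Sp(B)}$. By \Cref{AxStoneDuality}, the evaluation map $B \to 2^{Sp(B)}$ is an isomorphism, so it suffices to show $0 = 1$ in $2^{Sp(B)}$. But if $\neg Sp(B)$, then $Sp(B)$ is empty, so the function type $2^{Sp(B)}$ has a unique element (the empty function), and in particular its two distinguished elements $0$ and $1$ coincide. Transporting back across the isomorphism gives $0 =_B 1$.

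Neither step looks like an obstacle: the forward direction is a one-line consequence of the definitions, and the reverse just invokes Stone duality together with the observation that the Boolean algebra of functions from the empty set is trivial. The only minor subtlety to be careful about is avoiding a detour through $\neg\neg(0 =_B 1)$ — for instance, one might be tempted to argue via \Cref{SurjectionsAreFormalSurjections} applied to the initial map $2 \to B$, concluding only $\neg\neg(0 =_B 1)$, which is not obviously enough since $0 =_B 1$ need not be decidable in general. Going directly through \Cref{AxStoneDuality} sidesteps this double-negation issue entirely.
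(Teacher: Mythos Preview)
Your proposal is correct and takes essentially the same approach as the paper: the forward direction is identical, and for the reverse direction the paper invokes Stone duality in the form ``$Sp$ is an embedding, so $Sp(B)=\bot=Sp(2/(1))$ implies $B$ is trivial,'' while you invoke it directly as the isomorphism $B\cong 2^{Sp(B)}$ with $2^{\emptyset}$ trivial---these are the same argument in slightly different packaging. Your remark about avoiding the $\neg\neg$-detour via \Cref{SurjectionsAreFormalSurjections} is a nice observation and not needed for the proof itself.
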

\begin{proof}
  If $0=_B1$, there is no map $B\to 2$ preserving both $0$ and $1$, thus $\neg Sp(B)$. 
  Conversely, if $\neg Sp(B)$, then 
  $Sp(B)$ equals $\bot$, the spectrum of the trivial Boolean algebra. 
  As $Sp$ is an embedding, $B$ is equivalent to the trivial Boolean algebra, hence $0=_B1$. 
\end{proof}

\begin{corollary}\label{LemSurjectionsFormalToCompleteness}
 For $S:\Stone$, we have that $\neg \neg S \to  \propTrunc{S}$
\end{corollary}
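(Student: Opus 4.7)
The plan is to combine \Cref{SpectrumEmptyIff01Equal} with \Cref{SurjectionsAreFormalSurjections} applied to the unique morphism out of the initial Boolean algebra $2$.

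First, since $S$ is Stone, write $S = Sp(B)$ for some $B:\Boole$. Taking contrapositives in \Cref{SpectrumEmptyIff01Equal} yields the equivalence $\neg\neg Sp(B) \leftrightarrow \neg(0 =_B 1)$, so from the hypothesis $\neg\neg S$ we extract $0_B \neq 1_B$.

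Next, I would consider the unique morphism $\iota : 2 \to B$ in $\Boole$, which exists because $2$ is initial; it sends $0 \mapsto 0_B$ and $1 \mapsto 1_B$. Since $2$ has only the two elements $0$ and $1$, the inequality $0_B \neq 1_B$ makes $\iota$ injective. By \Cref{SurjectionsAreFormalSurjections}, the induced map $\iota^\ast : Sp(B) \to Sp(2)$ is then surjective. But $Sp(2) = \top$ by \Cref{boolean-algebra-examples}, and a surjection from any type $X$ onto $\top$ is logically equivalent to $\propTrunc{X}$. This gives $\propTrunc{Sp(B)} = \propTrunc{S}$, as required.

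There is no serious obstacle here; the proof is a direct chase through the axioms. The key insight is that \Cref{SurjectionsAreFormalSurjections} applied to $\iota$ repackages ``formal nonemptiness of the spectrum'' (that is, $0_B \neq 1_B$) as ``mere inhabitation of the spectrum''. In this sense the double-negation stability of mere inhabitation for Stone spaces is essentially a special case of surjections being formal surjections, which also explains the name of the corollary.
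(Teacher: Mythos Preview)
Your argument is correct and matches the paper's proof essentially step for step: from $\neg\neg Sp(B)$ deduce $0\neq_B 1$ via \Cref{SpectrumEmptyIff01Equal}, conclude that the initial map $2\to B$ is injective, and then apply \Cref{SurjectionsAreFormalSurjections} to obtain a surjection $Sp(B)\to Sp(2)=\top$, hence $\propTrunc{Sp(B)}$.
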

\begin{proof}
  Let $B:\Boole$ and suppose $\neg \neg Sp(B)$. By \Cref{SpectrumEmptyIff01Equal} we have that $0\not=_B1$, therefore the morphism $2\to B$ is injective. By \Cref{SurjectionsAreFormalSurjections} the map $Sp(B) \to Sp(2)$ is surjective, thus $Sp(B)$ is merely inhabited. 
\end{proof}

\subsection{Principles of omniscience}
In constructive mathematics, we do not assume the law of excluded middle (LEM).
There are some principles called principles of omniscience that are weaker than LEM, which can be used to describe 
how close a logical system is to satisfying LEM.
References on these principles include \cite{HannesDiener, ReverseMathsBishop}.
In this section, we will show that two of them (MP and LLPO) hold, 
and one (WLPO) fails in our system.

\begin{theorem}[The negation of the weak lesser principle of omniscience ($\neg$WLPO)]\label{NotWLPO}
  \[
    \neg \forall_{\alpha:2^\N} 
    ((\forall_{n:\N} \alpha_n = 0 ) \vee \neg (\forall_{n:\N} \alpha_n = 0))
  \]
\end{theorem}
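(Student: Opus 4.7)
The plan is to derive a contradiction from WLPO by producing a function $f : 2^\N \to 2$ which, via Stone duality, must correspond to an element of the free Boolean algebra $2[\N]$; a finiteness property of such elements will then conflict with how $f$ was defined.

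First I would assume WLPO. Using the assumed decidability, for each $\alpha : 2^\N$ I define $f(\alpha) = 0$ if $\forall_{n:\N}\, \alpha_n = 0$ and $f(\alpha) = 1$ otherwise; this yields a function $f : 2^\N \to 2$ since $2$ is a set. Next I invoke \Cref{AxStoneDuality} for $B = 2[\N]$, together with the identification $Sp(2[\N]) = 2^\N$ from \Cref{ExampleBAunderCantor}. The evaluation map then gives an isomorphism $2[\N] \cong 2^{2^\N}$, so $f$ corresponds to some $b \in 2[\N]$ with $f(\alpha) = \alpha(b)$ for all $\alpha$.

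Since $2[\N]$ is the free Boolean algebra on $\N$, every element only involves finitely many generators, so I would fix $k$ such that $b$ lies in the subalgebra generated by $g_0, \ldots, g_k$. Consequently $f(\alpha)$ depends only on $\alpha_0, \ldots, \alpha_k$. To finish, I would compare the constantly-zero sequence $\alpha^0$ with the sequence $\beta$ defined by $\beta_{k+1} = 1$ and $\beta_n = 0$ for $n \ne k+1$: these agree on positions $0, \ldots, k$, forcing $f(\alpha^0) = f(\beta)$, but by construction $f(\alpha^0) = 0$ and $f(\beta) = 1$, the required contradiction.

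The main obstacle is making precise the claim that every element of $2[\N]$ only involves finitely many generators, since this is the only non-tautological input. One route is to write $2[\N]$ as the sequential colimit of the finite free Boolean algebras $2[\{0,\ldots,k\}]$ and argue that $b$ factors through some finite stage; alternatively one can induct on a representative term for $b$. Either way, the consequence is that the function $\alpha \mapsto \alpha(b)$ factors through the projection $2^\N \to 2^{\{0,\ldots,k\}}$, which is exactly what the final comparison exploits.
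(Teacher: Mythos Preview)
Your argument is correct and essentially identical to the paper's proof: both derive from WLPO a decision map $f:2^\N\to 2$, invoke Stone duality to obtain a corresponding element of $2[\N]$, use that this element involves only finitely many generators, and then exhibit two sequences agreeing on those generators but with different $f$-values. The only cosmetic differences are your choice of witness sequence (a single $1$ at position $k{+}1$ versus the paper's sequence with $1$'s at all positions $>k$) and your explicit discussion of why elements of $2[\N]$ are finitely supported, which the paper simply asserts.
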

\begin{proof}
  Assume $f:2^\mathbb N \to 2$ such that 
  $f(\alpha) = 0$ if and only if $\forall_{n:\mathbb N} \alpha_n= 0$. 
  By \Cref{AxStoneDuality}, there is some $c:2[\N]$ with 
  $f(\alpha) = 0 \leftrightarrow \alpha(c) = 0$. 
  There exists $k:\N$ such that $c$ is expressed the generators $(g_n)_{n\leq k}$. 
  Now consider $\beta,\gamma:2^\N$ given by 
  $\beta(g_n) = 0$ for all $n:\mathbb N$ and
  $\gamma(g_n) = 0$ if and only if $n\leq k$. 
  As $\beta, \gamma$ are equal on $(g_n)_{n\leq k}$, we have $\beta(c) = \gamma(c)$. 
  However, $f(\beta) = 0$ and $f(\gamma) = 1$, giving a contradiction. 
\end{proof}

\begin{theorem}
  For $\alpha:\Noo$, we have that 
  \[
    (\neg (\forall_{n:\mathbb N} \alpha_n= 0)) \to \Sigma_{n:\mathbb N} \alpha_n= 1
  \]
\end{theorem}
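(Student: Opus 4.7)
The plan is to apply dependent choice (\Cref{axDependentChoice}) to a sequence of decidable approximations, exploiting that since $\alpha : \Noo$ has at most one $n$ with $\alpha_n = 1$, the target type $\Sigma_{n:\N}\, \alpha_n = 1$ is a proposition; this reduces the task to producing a mere inhabitant.

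Concretely, for each $k : \N$, I would set
\[ E_k := \bigl(\Sigma_{n:\N}\, \alpha_n = 1\bigr) + \bigl(\forall_{n\leq k}\, \alpha_n = 0\bigr). \]
Each $E_k$ is decidable and inhabited: one simply examines $\alpha_0,\dots,\alpha_k$ in $\{0,1\}$ and either produces a witness in the left summand or a proof of the right one. The natural projection $E_{k+1} \to E_k$ is surjective by the same argument, since a left-summand point lifts to itself and a proof $\forall_{n\leq k}\, \alpha_n = 0$ can be extended using decidability of $\alpha_{k+1}$, either strengthening the bound or producing a witness at $k+1$. Then \Cref{axDependentChoice} yields that $\lim_k E_k$ is merely inhabited.

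A standard coherence analysis identifies $\lim_k E_k$ with $(\Sigma_{n:\N}\, \alpha_n = 1) + (\forall_{n:\N}\, \alpha_n = 0)$: once a component of a coherent sequence lies in the left summand, compatibility forces all subsequent components to agree with it, and otherwise all components lie in the right summand, whose bounded universal quantifiers assemble into an unbounded one. Because $\alpha : \Noo$ makes the left summand a proposition and the two summands are mutually exclusive, the disjoint union is itself a proposition, so the mere inhabitation gives an actual element. The hypothesis $\neg(\forall_{n:\N}\, \alpha_n = 0)$ then eliminates the right summand, leaving $\Sigma_{n:\N}\, \alpha_n = 1$.

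The main subtlety I anticipate is the identification of $\lim_k E_k$ with the global disjoint union: one has to argue that a coherent sequence cannot oscillate between summands and that the bounded universal quantifiers in the right summand consistently assemble into an unbounded one. Surjectivity of the transition maps and the final propositional reduction are essentially bookkeeping.
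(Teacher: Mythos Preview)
The transition maps $E_{k+1}\to E_k$ are not surjective. As you set them up, the left summand maps identically to the left summand and the right summand maps by restriction to the right summand. Now take any $\alpha$ with $\alpha_n=0$ for all $n\leq k$ but $\alpha_{k+1}=1$. The right-summand element $\mathrm{inr}(q):E_k$ has no preimage: every left-summand element of $E_{k+1}$ lands in the left summand of $E_k$, while the right summand of $E_{k+1}$ is empty since $\alpha_{k+1}\neq 0$. Your suggested lift, ``producing a witness at $k{+}1$'', lives in the \emph{left} summand of $E_{k+1}$ and therefore does not project to $\mathrm{inr}(q)$.

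This is not locally repairable. Your construction of the $E_k$, the transition maps, and the appeal to dependent choice nowhere use the $\Noo$ hypothesis; combined with your (correct) identification $\lim_k E_k \simeq (\Sigma_n\,\alpha_n=1)+(\forall_n\,\alpha_n=0)$, a successful argument would show this sum is merely inhabited for \emph{every} $\alpha:2^\N$, i.e.\ LPO, which implies WLPO and contradicts \Cref{NotWLPO}. The paper's proof proceeds via Stone duality rather than a choice principle: from $\neg(\forall_n\alpha_n=0)$ one obtains $Sp(2/(\alpha_n)_{n:\N})=\emptyset$, hence by \Cref{SpectrumEmptyIff01Equal} the quotient algebra is trivial, which is the \emph{finitary} statement $\bigvee_{i\leq k}\alpha_i=1$ for some explicit $k$; the unique witness is then read off using that $\alpha:\Noo$.
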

\begin{proof}
  By \Cref{ClosedPropAsSpectrum}, we have that $\neg(\forall_{n:\N} \alpha_n = 0)$ implies that 
  $Sp(2/(\alpha_n)_{n:\N}$ is empty. 
  Hence $2/(\alpha_n)_{n:\N}$ is trivial by \Cref{SpectrumEmptyIff01Equal}. 
  Then there exists $k:\N$ such that $\bigvee_{i\leq k} \alpha_i = 1$. 
  As $\alpha_i = 1$ for at most one $i:\N$, 
  there exists an unique $n:\mathbb N$ with $\alpha_n = 1$. 
\end{proof}

\begin{corollary}[Markov's principle (MP)]\label{MarkovPrinciple}
  For $\alpha:2^\mathbb N$, we have that 
  \[
    (\neg (\forall_{n:\mathbb N} \alpha_n= 0)) \to \Sigma_{n:\mathbb N} \alpha_n= 1
  \]
\end{corollary}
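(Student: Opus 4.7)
The plan is to reduce the statement for an arbitrary $\alpha : 2^\N$ to the previous theorem, which already gives MP for sequences in $\Noo$. The idea is to replace $\alpha$ by its ``first one'' indicator.

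Given $\alpha : 2^\N$, define $\beta : \N \to 2$ by
\[
\beta_n = 1 \iff \alpha_n = 1 \ \text{and}\ \alpha_i = 0\ \text{for all } i < n.
\]
This condition is decidable since it involves only finitely many values of $\alpha$. I would first check that $\beta$ defines an element of $\Noo$: if $\beta_n = \beta_m = 1$ with $n < m$, then on the one hand $\alpha_n = 1$, while on the other hand $\beta_m = 1$ forces $\alpha_n = 0$, a contradiction. Hence $\beta$ hits $1$ at most once, so via the identification of \Cref{boolean-algebra-examples}\ref{ExampleBAunderNinfty}, it is an element of $\Noo$.

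Next I would show the key reduction: $\neg(\forall_{n:\N}\alpha_n = 0) \to \neg(\forall_{n:\N} \beta_n = 0)$. By contrapositive, it suffices to prove that $\forall_{n:\N}\beta_n = 0$ implies $\forall_{n:\N}\alpha_n = 0$. This goes by strong induction on $n$: assuming $\alpha_i = 0$ for all $i < n$, the definition of $\beta$ gives $\beta_n = 1 \leftrightarrow \alpha_n = 1$, so $\beta_n = 0$ forces $\alpha_n = 0$. Applying the previous theorem to $\beta$ then yields some $n$ with $\beta_n = 1$, and this same $n$ witnesses $\alpha_n = 1$.

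The only mildly delicate point is keeping the construction of $\beta$ decidable and verifying that both directions of the reduction are constructive; in particular one must avoid appealing to a least-witness principle for $\alpha$ itself (which would already amount to MP). The strong-induction argument above is precisely what sidesteps that issue.
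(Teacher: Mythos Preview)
Your proof is correct and follows exactly the same approach as the paper: the paper's $\alpha'$ (defined by $\alpha'_n = 1$ iff $n$ is minimal with $\alpha_n = 1$) is precisely your $\beta$, and the paper then appeals to the preceding theorem without spelling out the reduction. You have simply filled in the details the paper omits (decidability of the defining condition, membership in $\Noo$, and the strong-induction argument for the contrapositive).
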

\begin{proof}
  Given $\alpha:2^\mathbb N$, consider the sequence $\alpha':\Noo$ satisfying $\alpha'_n = 1$ if and only if
  $n$ is minimal with $\alpha_n = 1$. Then apply the above theorem.
\end{proof}

\begin{theorem}[The lesser limited principle of omniscience (LLPO)]\label{LLPO}
  For $\alpha:\N_\infty$, 
  we have: 
  \[\label{eqnLLPO}
    \forall_{k:\N} \alpha_{2k} = 0  \vee \forall_{k:\N} \alpha_{2k+1} = 0
  \]
\end{theorem}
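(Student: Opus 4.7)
The plan is to realize each disjunct as a Stone-space proposition and combine the two via a finite product of Boolean algebras. Setting $B_e := 2/(\alpha_{2k})_{k:\N}$ and $B_o := 2/(\alpha_{2k+1})_{k:\N}$, \Cref{ClosedPropAsSpectrum} identifies $Sp(B_e)$ and $Sp(B_o)$ with the two disjuncts of LLPO, so the goal becomes $Sp(B_e) \vee Sp(B_o)$.

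Next I would form $B_e \times B_o \in \Boole$ and observe that its spectrum decomposes as $Sp(B_e) + Sp(B_o)$: any Boolean morphism $B_e \times B_o \to 2$ must send exactly one of the complementary idempotents $(1,0),(0,1)$ to $1$ and so factors through a projection. Applying \Cref{LemSurjectionsFormalToCompleteness} to the Stone space $Sp(B_e \times B_o)$, it then suffices to produce $\neg\neg Sp(B_e \times B_o)$, which by \Cref{SpectrumEmptyIff01Equal} amounts to showing that $B_e \times B_o$ is not trivial, i.e., $0 \neq 1$ in the product.

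The main obstacle is this non-triviality. Suppose $0 = 1$ in $B_e \times B_o$; then $0 = 1$ in both factors. Triviality of $B_e = 2/(\alpha_{2k})_k$ means $1$ lies in the ideal generated by the $\alpha_{2k}$'s, so some finite join $\bigvee_{k \in S}\alpha_{2k}$ equals $1$ in $2$, producing a concrete $k_1 \in S$ with $\alpha_{2k_1} = 1$; symmetrically there is $k_2$ with $\alpha_{2k_2+1} = 1$. But the defining relation $g_m \wedge g_n = 0$ for $m \neq n$ in $B_\infty$, applied with $m = 2k_1$ and $n = 2k_2+1$, forces $1 = 1 \wedge 1 = 0$, contradiction. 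Hence $B_e \times B_o$ is non-trivial, and chasing back through the reductions delivers LLPO.
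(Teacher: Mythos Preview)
Your proof is correct and takes a genuinely different route from the paper's.

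The paper works uniformly: it constructs a single morphism $f:B_\infty \to B_\infty \times B_\infty$ in $\Boole$ sending $g_{2k} \mapsto (g_k,0)$ and $g_{2k+1}\mapsto (0,g_k)$, verifies injectivity of $f$ by a case analysis on the conjunctive-normal-form description of elements of $B_\infty$, and then invokes \Cref{SurjectionsAreFormalSurjections} directly to obtain a surjection $s:\Noo+\Noo\twoheadrightarrow\Noo$; a preimage of a given $\alpha$ under $s$ witnesses one of the two disjuncts. Your argument instead works pointwise: for a fixed $\alpha$ you form the $\alpha$-dependent algebras $B_e,B_o$ and reduce to non-triviality of their product, which follows immediately from the at-most-one property of $\alpha:\Noo$; you then reach the conclusion via \Cref{LemSurjectionsFormalToCompleteness} (itself a corollary of \Cref{SurjectionsAreFormalSurjections}). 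Your route is shorter and avoids the normal-form analysis of $B_\infty$ entirely. On the other hand, the paper's uniform construction is not merely a device for proving LLPO: the explicit surjection $\Noo+\Noo\twoheadrightarrow\Noo$ is reused in the very next lemma to show that it admits no section, illustrating that the LLPO witness cannot be chosen continuously in $\alpha$---a point your pointwise argument does not address.
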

\begin{proof}
%
  Define $f:B_\infty \to B_\infty \times B_\infty$ on generators as follows:
  \[\label{eqnLLPOProofMap}
    f(g_n) =\begin{cases}
      (g_k,0) \text{ if } n = 2k\\
      (0,g_k) \text{ if } n = 2k+1\\
    \end{cases}
  \]
  Note that $f$ is well-defined as map in $\Boole$ as 
  $f(g_n) \wedge f(g_m) = 0$ whenever $m\neq n$. 
  We claim that $f$ is injective. 
  If $I\subseteq \N$, write 
  $ I_0 =\{k\ |\ 2k \in I\}, 
    I_1 =\{k\ |\ 2k+1 \in I\}
  $.
  Recall that any $x:B_\infty$ is of the form 
  $\bigvee_{i\in I} g_i$ or $\bigwedge_{i\in I} \neg g_i$ for some finite set $I$. 
  \begin{itemize}
    \item If $x = \bigvee_{i\in I} g_i$, then 
      $f(x) = (\bigvee_{i\in I_0}g_{i}, \bigvee_{i\in I_1}g_i)$. 
      So if $f(x) = 0$, then $I_0=I_1 = I = \emptyset$ and $x = 0$. 
    \item Suppose 
      $x = \bigwedge_{i\in I} \neg g_i$.
      Then $f(x) = (\bigwedge_{i\in I_0} \neg g_i, \bigwedge_{i\in I_1} \neg g_i)$, 
      so $f(x) \neq 0$. 
  \end{itemize}
  By \Cref{SurjectionsAreFormalSurjections},
  $f$ corresponds to a surjection 
  $s:\Noo + \Noo \to \Noo$.
  Thus for $\alpha : \Noo$, 
  there exists some $x:\Noo + \Noo$ such that $s(x) = \alpha$. 
  If $x = inl(\beta)$, 
  for any $k:\N$, we have that 
\[\alpha_{2k+1} = s(x)_{2k+1} = x(f(g_{2k+1})) = inl(\beta) (0,g_k)  = \beta(0) = 0.\]
  Similarly, if $x = inr(\beta)$, we have $\alpha_{2k} = 0$ for all $k:\N$. 
\end{proof}
The surjection $s:\Noo + \Noo \to \Noo$ as above does not have a section 
as the following shows:
\begin{lemma}
  The function $f$ defined above does not have a retraction. 
\end{lemma}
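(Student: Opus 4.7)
The plan is to leverage Stone duality. By \Cref{SpIsAntiEquivalence}, together with the fact that $Sp$ sends products in $\Boole$ to coproducts in $\Stone$, a retraction of $f$ in $\Boole$ corresponds precisely to a section of the surjection $s:\Noo+\Noo\to\Noo$ built in the proof of \Cref{LLPO}. It therefore suffices to show that no function $\sigma:\Noo\to\Noo+\Noo$ with $s\circ\sigma = \mathrm{id}$ exists.

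Suppose such a $\sigma$ is given. Post-composing with the canonical decidable map $\Noo+\Noo\to 2$ sending $\mathrm{inl}$-values to $0$ and $\mathrm{inr}$-values to $1$ yields a function $h:\Noo\to 2$. By \Cref{AxStoneDuality} applied to $B_\infty$, this $h$ is evaluation at some $c \in B_\infty$, i.e.\ $h(\alpha) = \alpha(c)$. The conjunctive normal form recorded in \Cref{ExampleBAunderNinfty} then allows me to write $c$ as either $\bigvee_{i\in I} g_i$ or $\bigwedge_{i\in I}\neg g_i$ for some finite $I\subseteq\N$.

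To finish, I would probe $\sigma$ at the Kronecker delta sequences $\alpha^n\in\Noo$ (with $\alpha^n_m = 1$ iff $m = n$). Since $I$ is finite, indices $n\notin I$ of both parities exist. In the disjunctive case, choosing odd $n\notin I$ gives $h(\alpha^n)=0$, so $\sigma(\alpha^n)=\mathrm{inl}(\beta)$ for some $\beta:\Noo$; but then $s(\mathrm{inl}(\beta))=\alpha^n$ has $0$ at every odd index, contradicting $\alpha^n_n=1$. The conjunctive case is symmetric: an even $n\notin I$ forces $\sigma(\alpha^n)=\mathrm{inr}(\beta)$ and hence $\alpha^n_n=0$.

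The main obstacle, to the extent there is one, is noticing that Stone duality does double duty here: it first reduces the Boolean-algebraic retraction question to a set-theoretic section question in $\Stone$, and then via $2^{\Noo}\simeq B_\infty$ it forces the resulting $h$ to be described by only finitely much data, after which the contradiction is just combinatorics on finite index sets.
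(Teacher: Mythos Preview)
Your proof is correct but takes a different route from the paper. The paper argues purely in $\Boole$: assuming a retraction $r$, it notes $r(0,g_k)=g_{2k+1}$, so $r(0,1)\geq g_{2k+1}$ for every $k$, which forces $r(0,1)$ into the conjunctive form $\bigwedge_{i\in I}\neg g_i$; the same holds for $r(1,0)$; but then $r(0,1)\wedge r(1,0)=r(0,0)=0$, impossible for two elements of that form. You instead pass to the Stone side, build $h:\Noo\to 2$, and probe at Kronecker deltas. The two arguments are closer than they look: under the duality your element $c$ is exactly $r(0,1)$, since the summand indicator $\Noo+\Noo\to 2$ corresponds to $(0,1)\in B_\infty\times B_\infty$. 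The paper's version is shorter and never leaves the algebra; yours makes the topological content explicit and mirrors the point-probing style used elsewhere (e.g.\ in \Cref{NotWLPO}), which some readers may find more transparent.
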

\begin{proof}
  Suppose $r:B_\infty \times B_\infty \to B_\infty$ is a retraction of $f$. 
  Then $r(0,g_k) = g_{2k+1}$ and $r(g_k,0) = g_{2k}$. 
  Note that $r(0,1) \geq r(0,g_k) = g_{2k+1}$ for all $k:\N$. 
  As a consequence, $r(0,1)$ is of the form $\bigwedge_{i\in I} \neg g_i$ for some finite set $I$.
  By similar reasoning so is $r(1,0)$. 
  But this contradicts:
  \[r(0,1) \wedge r(1,0) = r( (1,0) \wedge (0,1)) = r(0,0) = 0.\]
  Thus no retraction exists. 
\end{proof}

\subsection{Open and closed propositions}
In this section we will introduce a topology on the type of propositions, and 
study their logical properties.
We think of open and closed propositions respectively as countable disjunctions and conjunctions of decidable propositions.
Such a definition is universe-independent, and can be made internally.

\begin{definition}
A proposition $P$ is open (resp. closed) if there exists some $\alpha:2^\N$ such that $P \leftrightarrow \exists_{n:\mathbb N} \alpha_n = 0$ (resp. $P \leftrightarrow \forall_{n:\mathbb N} \alpha_n = 0$). We denote by $\Open$ and $\Closed$ the types of open and closed propositions.
\end{definition}

\begin{remark}\label{rmkOpenClosedNegation}
  The negation of an open proposition is closed, 
  and by MP (\Cref{MarkovPrinciple}), the negation of a closed proposition is open 
  and both open, closed propositions are $\neg\neg$-stable. 
  By $\neg$WLPO (\Cref{NotWLPO}), 
  not every closed proposition is decidable. 
  Therefore, not every open proposition is decidable. 
  Every decidable proposition is both open and closed.
\end{remark}
\begin{lemma}
  We have the following results on open and closed propositions:
  \begin{itemize}
    \item Closed propositions are closed under finite disjunctions. 
    \item Closed propositions are closed under countable conjunctions. 
    \item Open propositions are closed under finite conjunctions. 
    \item Open propositions are closed under countable disjunctions. 
  \end{itemize}
\end{lemma}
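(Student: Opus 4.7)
The plan is to handle the four closure properties in order of difficulty, using the pairing bijection $\N \cong \N\times\N$ to interleave sequences, \Cref{axDependentChoice} to obtain coherent witnesses for countable families, and \Cref{LemSurjectionsFormalToCompleteness} for the most subtle case.

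For \emph{finite conjunctions of open}, given $P \leftrightarrow \exists_n \alpha_n = 0$ and $Q \leftrightarrow \exists_n \beta_n = 0$, I take $\gamma_{\langle m,n\rangle} := \alpha_m \vee \beta_n$, so that $\gamma_{\langle m,n\rangle} = 0$ iff both $\alpha_m$ and $\beta_n$ vanish, giving $\exists_k \gamma_k = 0 \leftrightarrow P \wedge Q$. For \emph{countable disjunctions of open} and, symmetrically, \emph{countable conjunctions of closed}, each $P_k$ only \emph{merely} has a witness $\alpha^k$, so a raw function $k\mapsto\alpha^k$ is not directly available. My plan is to apply \Cref{axDependentChoice} to the tower
\[
  E_n := \sum_{(\alpha^k)_{k<n}:(2^\N)^n}\prod_{k<n}\bigl(P_k\leftrightarrow\exists_j\alpha^k_j=0\bigr),
\]
whose forgetful projection $E_{n+1}\twoheadrightarrow E_n$ is surjective precisely because each $P_n$ is open; the axiom provides $\propTrunc{\lim_n E_n}$, which merely furnishes an entire sequence $(\alpha^k)_k$, and then $\gamma_{\langle k,n\rangle}:=\alpha^k_n$ exhibits $\bigvee_k P_k$ as open. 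The countable-conjunction-of-closed case is identical with $\exists$ replaced by $\forall$ throughout.

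For \emph{finite disjunctions of closed}, given $P \leftrightarrow \forall_n \alpha_n = 0$ and $Q \leftrightarrow \forall_n \beta_n = 0$, I set $\gamma_{\langle m,n\rangle} := \alpha_m \wedge \beta_n$; a direct calculation using Markov's principle shows $\forall_k \gamma_k = 0 \leftrightarrow \neg(\neg P \wedge \neg Q)$. It then suffices to verify $P \vee Q \leftrightarrow \neg(\neg P \wedge \neg Q)$. The forward direction is immediate; for the converse, by \Cref{ClosedPropAsSpectrum} I write $P \leftrightarrow Sp(A)$ and $Q \leftrightarrow Sp(B)$ for some $A, B : \Boole$, so that $P + Q \simeq Sp(A\times B)$ is a Stone space (by \Cref{SpIsAntiEquivalence}, products in $\Boole$ correspond to coproducts of spectra). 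Then \Cref{LemSurjectionsFormalToCompleteness} supplies $\neg\neg(P+Q) \to \propTrunc{P+Q} = P \vee Q$, and intuitionistically $\neg\neg(P+Q) \leftrightarrow \neg(\neg P \wedge \neg Q)$, closing the argument.

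The main obstacle is this final equivalence $P \vee Q \leftrightarrow \neg(\neg P \wedge \neg Q)$ for closed $P,Q$, which is not intuitionistically valid in general; my proof leans essentially on \Cref{LemSurjectionsFormalToCompleteness}, the synthetic avatar of LLPO-style behaviour in the axiom system. A secondary subtlety is that plain countable choice is not assumed, so \Cref{axDependentChoice} must do real work in extracting coherent sequences of witnesses from the propositional truncations built into the definitions of open and closed.
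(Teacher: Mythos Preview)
Your proof is correct. The countable cases match the paper's approach: the paper writes ``by countable choice'' (a standard corollary of \Cref{axDependentChoice}) to extract a family of witnessing sequences and then interleaves via a pairing surjection $\N\twoheadrightarrow\N\times\N$, which is exactly what your tower construction with $E_n$ accomplishes; your version is just the unfolded form of countable choice.

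The one genuine difference is the finite--disjunction--of--closed case. The paper simply cites the folklore equivalence (Proposition~1.4.1 of \cite{HannesDiener}) between LLPO and ``closed propositions are closed under binary disjunction'', and then invokes \Cref{LLPO}. You instead argue directly: with $\gamma_{\langle m,n\rangle}=\alpha_m\wedge\beta_n$ you exhibit $\neg(\neg P\wedge\neg Q)$ as closed, and then recover $P\vee Q\leftrightarrow\neg\neg(P+Q)$ by observing that $P+Q\simeq Sp(A\times B)$ is Stone and applying \Cref{LemSurjectionsFormalToCompleteness}. This is essentially a second, more structural derivation of the relevant instance of LLPO, using the same axiom (\Cref{SurjectionsAreFormalSurjections}, via its corollary) that powers the paper's proof of \Cref{LLPO}. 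Your route is more self-contained and avoids the external citation; the paper's route makes the connection to constructive reverse mathematics explicit. One small point worth stating in your write-up is that $A\times B$ is indeed in $\Boole$ (e.g.\ $2[e]/(\alpha_n\cdot e,\ \beta_m\cdot\neg e)_{n,m}$ presents it), since \Cref{SpIsAntiEquivalence} is phrased for colimits in $\Boole$ and you need the dual direction here.
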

\begin{proof}
  By Proposition 1.4.1 of \cite{HannesDiener}, LLPO(\Cref{LLPO}) is equivalent to the statement that 
  the disjunction of two closed propositions are closed. 
  The other statements have similar proofs, and we only present the proof that closed propositions are closed under 
  countable conjunctions. 
  Let $(P_n)_{n:\N}$ be a countable family of closed propositions. 
  By countable choice, for each 
  $n:\N$ we have an $\alpha_n:2^\N $ 
  such that $P_n \leftrightarrow \forall_{m:\N} \alpha_{n,m} =0$. 
  Consider a surjection $s:\N \twoheadrightarrow \N \times \N$, and let 
  $\beta_k = \alpha_{s(k)}.$
  Note that $\forall_{k:\N} \beta_k = 0$ if and only if 
  $\forall_{n:\N} P_n$. 
\end{proof}
We will use the above properties silently from now on. 
\begin{corollary}\label{ClopenDecidable}
  If a proposition is both open and closed, it is decidable. 
\end{corollary}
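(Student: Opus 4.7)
The plan is to use the closure properties just established, together with the $\neg\neg$-stability of closed propositions noted in \Cref{rmkOpenClosedNegation}, to reduce decidability of $P$ to the intuitionistic tautology $\neg\neg(P\vee\neg P)$.

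First I would unpack the hypothesis: suppose $P$ is both open and closed. Then by \Cref{rmkOpenClosedNegation}, the negation of an open proposition is closed, so $\neg P$ is closed; and by the same remark (which invokes Markov's principle), the negation of a closed proposition is open, so $\neg P$ is also open. Hence both $P$ and $\neg P$ lie in $\Closed$.

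Next I would apply the preceding lemma, namely that closed propositions are closed under finite disjunctions (a consequence of LLPO). This yields that $P\vee\neg P$ is a closed proposition. Since closed propositions are $\neg\neg$-stable (again from \Cref{rmkOpenClosedNegation}) and $\neg\neg(P\vee\neg P)$ holds intuitionistically, we conclude $P\vee\neg P$, i.e.\ $P$ is decidable.

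I do not foresee a real obstacle here; the only mild care is to make sure we are legitimately using MP to obtain that $\neg P$ is open (needed to get $\neg P$ closed is free, but the symmetry of the argument above actually only needs $P$ closed and $\neg P$ closed, and the latter follows from $P$ being open without MP). So in fact the proof goes through using only LLPO and the fact that closed propositions are $\neg\neg$-stable, which itself is what requires MP. The whole argument is just a few lines once the remarks and lemma are in place.
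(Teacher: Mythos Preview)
Your proof is correct and follows essentially the same strategy as the paper: show that $P\vee\neg P$ lies in a class of $\neg\neg$-stable propositions and then invoke the tautology $\neg\neg(P\vee\neg P)$. The only difference is that you work on the closed side (using $P$ closed and $\neg P$ closed, then LLPO to get $P\vee\neg P$ closed), whereas the paper works on the open side (using $P$ open and, via MP, $\neg P$ open, so $P\vee\neg P$ is open by the easy closure of open propositions under finite disjunctions). Both routes rely on MP through the $\neg\neg$-stability step; the paper's variant is marginally more economical in that it does not need LLPO for the disjunction, but the two arguments are dual and equally valid.
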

\begin{proof}
  If $P$ is open and closed, 
  $P\vee \neg P$ is open, 
  hence $\neg\neg$-stable and provable. 
%
\end{proof}


\begin{lemma}\label{ClosedMarkov}
  For $(P_n)_{n:\N}$ a sequence of closed propositions, we have 
  $\neg \forall_{n:\N} P_n \leftrightarrow  \exists_{n:\N} \neg P_n$. 
\end{lemma}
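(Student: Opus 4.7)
The plan is to reduce the statement to Markov's principle applied to a single binary sequence. The backward implication is immediate logic: from $\exists_{n:\N} \neg P_n$ and $\forall_{n:\N} P_n$ we would get both $P_n$ and $\neg P_n$ for some $n$, a contradiction. So the content lies in the forward direction.

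For the forward direction, I would first unpack closedness and combine. By countable choice, pick $\alpha^{(n)} : 2^\N$ for each $n$ such that $P_n \leftrightarrow \forall_{m:\N} \alpha^{(n)}_m = 0$. Using a surjection $s:\N \twoheadrightarrow \N\times\N$, define $\beta:2^\N$ by $\beta_k = \alpha^{(\pi_1 s(k))}_{\pi_2 s(k)}$, exactly as in the preceding lemma. Then $\forall_{n:\N} P_n$ is equivalent to $\forall_{k:\N} \beta_k = 0$, so $\neg \forall_{n:\N} P_n$ is equivalent to $\neg \forall_{k:\N} \beta_k = 0$.

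Now I would apply Markov's principle (\Cref{MarkovPrinciple}) to $\beta$ to obtain some $k:\N$ with $\beta_k = 1$. Writing $(n_0,m_0) = s(k)$, this means $\alpha^{(n_0)}_{m_0} = 1$, which witnesses $\neg \forall_{m:\N} \alpha^{(n_0)}_m = 0$, i.e. $\neg P_{n_0}$. Hence $\exists_{n:\N} \neg P_n$, as required.

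The only potentially delicate point is the appeal to countable choice to pick the witnesses $\alpha^{(n)}$, but this is unproblematic in this setting (and was used silently in the previous lemma). Everything else is routine once MP is available, so I do not expect a real obstacle.
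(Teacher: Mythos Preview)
Your proof is correct but takes a different route from the paper. You unfold the definition of closedness via countable choice, merge the witnessing sequences $\alpha^{(n)}$ into a single $\beta$, and apply Markov's principle directly to extract a witness. This works fine and makes the role of MP completely explicit.

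The paper's proof is more abstract and shorter: it simply observes that both $\neg\forall_{n}P_n$ and $\exists_{n}\neg P_n$ are open (the former because $\forall_n P_n$ is closed and negations of closed propositions are open by \Cref{rmkOpenClosedNegation}; the latter as a countable disjunction of open propositions), hence both are $\neg\neg$-stable. One implication is trivial; for the other, from $\neg\exists_n\neg P_n$ one gets $\forall_n\neg\neg P_n$, hence $\forall_n P_n$ by $\neg\neg$-stability of closed propositions, so $\neg\forall_n P_n \to \neg\neg\exists_n\neg P_n$, and $\neg\neg$-stability finishes. In effect, the paper leverages the closure lemma and \Cref{rmkOpenClosedNegation} as black boxes, whereas you re-derive the relevant instance of ``countable conjunction of closed is closed'' by hand before invoking MP. Your argument is more self-contained; the paper's buys brevity by reusing the infrastructure already in place.
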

\begin{proof}
  Both $\neg \forall_{n:\N} P_n$ and $\exists_{n:\N} \neg P_n$ are open, hence $\neg\neg$-stable.
  The equivalence follows. 
\end{proof} 

%
%
\begin{lemma}\label{ImplicationOpenClosed}
  If $P$ is open 
  and $Q$ is closed 
  then $P\to Q$ is closed. 
  If $P$ is closed and $Q$ open, then $P\to Q$ is open. 
\end{lemma}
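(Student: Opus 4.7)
The plan is to reduce both statements to the single observation that, under our hypotheses, $P\to Q$ is equivalent to the disjunction $\neg P \vee Q$, and then to read off openness or closedness from the stability properties listed in \Cref{rmkOpenClosedNegation}.

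More precisely, I would first prove the equivalence $(P\to Q) \leftrightarrow (\neg P \vee Q)$ whenever $P$ and $Q$ are each either open or closed. The direction $(\neg P \vee Q)\to (P\to Q)$ is trivial intuitionistic logic. For the converse, observe that in each of our two cases $\neg P \vee Q$ is a disjunction of two propositions which are both open or both closed, and in either case this disjunction is $\neg\neg$-stable (open propositions are $\neg\neg$-stable by MP, closed ones directly). So it suffices to rule out $\neg(\neg P\vee Q)$, which unfolds to $\neg\neg P \wedge \neg Q$; since $P$ itself is open or closed, it is $\neg\neg$-stable, giving $P$, and then $P\to Q$ yields $Q$, contradicting $\neg Q$.

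Given this equivalence, the two claims follow immediately from the closure properties of the previous lemma. If $P$ is open, then $\neg P$ is closed (by \Cref{rmkOpenClosedNegation}), so when $Q$ is closed, $\neg P \vee Q$ is a finite disjunction of closed propositions, hence closed. If $P$ is closed, then $\neg P$ is open (using MP again), so when $Q$ is open, $\neg P \vee Q$ is a finite disjunction of open propositions, hence open.

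The only delicate point is the appeal to $\neg\neg$-stability of closed propositions when verifying the equivalence in the second case: this is exactly where MP enters, via the fact that the negation of a closed proposition is open and closed propositions are $\neg\neg$-stable. Otherwise the argument is purely formal, and no further axiom beyond the closure properties already recorded is needed.
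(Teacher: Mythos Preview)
Your proof is correct and follows essentially the same approach as the paper: establish $(P\to Q)\leftrightarrow(\neg P\vee Q)$ via $\neg\neg$-stability of the right-hand side, then read off closedness or openness from the closure properties of $\neg$ and finite $\vee$ recorded in \Cref{rmkOpenClosedNegation} and the preceding lemma. The paper's version is terser but the argument is the same; your remarks about exactly where MP enters are slightly tangled (it is the $\neg\neg$-stability of \emph{open} propositions and the fact that $\neg P$ is open for closed $P$ that require MP), but this does not affect correctness.
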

\begin{proof}
  Note that $\neg P \vee Q$ is closed. Using $\neg\neg$-stability
  we can show $(P\to Q) \leftrightarrow (\neg P \vee Q)$. 
  The other proof is similar. 
\end{proof}
%

\subsection{Types as spaces}
The subobject $\Open$ of the type of propositions induces a topology on every type. 
This is the viewpoint taken in synthetic topology. 
We will follow the terminology of \cite{SyntheticTopologyEscardo, SyntheticTopologyLesnik}. 

\begin{definition}
  Let $T$ be a type, and let $A\subseteq T$ be a subtype. 
  We call $A\subseteq T$ open (resp. closed) if $A(t)$ is open (resp. closed) for all $t:T$.
\end{definition}

\begin{remark}
  It follows immediately that the pre-image of an open by any map of types is open, so that any map is continuous. 
  In \Cref{StoneClosedSubsets}, we shall see that the resulting topology is as expected for second countable Stone spaces.
  In \Cref{IntervalTopologyStandard}, we shall see that the same holds for the unit interval. 
\end{remark}


\section{Overtly discrete spaces}
%

\begin{definition}
  We call a type overtly discrete if
  it is a sequential colimit of finite sets. 
\end{definition} 
\begin{remark}
  It follows from Corollary 7.7 of \cite{SequentialColimitHoTT} that 
  overtly discrete types are sets, and that the colimit can be defined as in set theory. 
  The type of overtly discrete types is independent on a choice of universe, 
  so we can write $\ODisc$ for this type. 
\end{remark}
  Using dependent choice, we have the following results: 
  \begin{lemma}\label{lemDecompositionOfColimitMorphisms}
      A map between overtly discrete sets is a sequential colimit of maps between finite sets. 
  \end{lemma}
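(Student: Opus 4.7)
The plan is to fix sequential presentations $A \simeq \mathop{\mathrm{colim}}_n A_n$ and $B \simeq \mathop{\mathrm{colim}}_m B_m$ with each $A_n, B_m$ finite (which exist by the definition of overtly discrete), and to promote a given map $f\colon A \to B$ to a compatible sequence of maps $\tilde f_n \colon A_n \to B_{k_n}$ between finite stages, for some non-decreasing cofinal sequence $(k_n)$ in $\N$. The colimit of $(\tilde f_n)$ will then recover $f$, exhibiting $f$ as a sequential colimit of maps between finite sets.

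The first building block is a factorization lemma: any map $g$ from a finite set $F$ into $B$ factors strictly through some $B_m \to B$. For each $x : F$ there merely exists a preimage at some stage $m_x$; since $F$ is finite one consolidates these into a single $m \geq \max_x m_x$, using that each $B_m$ is a finite set and that two elements of $B_m$ which agree in the colimit already agree at some later $B_{m'}$. With this in hand, define $E_n$ to be the set of coherent partial lifts $(k_0 \leq \cdots \leq k_n, \tilde f_0, \dots, \tilde f_n)$ where $\tilde f_i \colon A_i \to B_{k_i}$ strictly factors $f \circ \iota_i^A$ and commutes strictly with both families of bonding maps.

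Next I verify that each restriction $E_{n+1} \twoheadrightarrow E_n$ is surjective: given data of length $n{+}1$, the factorization lemma applied to $A_{n+1}$ produces a candidate $\tilde f_{n+1}\colon A_{n+1} \to B_{k_{n+1}}$ for some $k_{n+1} \geq k_n$, and enlarging $k_{n+1}$ further forces the square with $\tilde f_n$ to commute on the nose, again because equalities in $B$ are realized at some finite stage. Applying \Cref{axDependentChoice} to the tower $(E_n)_n$ (whose base $E_0$ is inhabited for the same reason) yields an inhabitant of $\lim_n E_n$, i.e.\ a coherent sequence $(\tilde f_n)_n$ presenting $f$ as a morphism of sequential diagrams $(A_n)_n \to (B_{k_n})_n$.

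Since $(k_n)$ is cofinal in $\N$, the sequential colimit of $(B_{k_n})_n$ coincides with $B$, and the induced map on colimits sends $\iota_n^A(a)$ to $\iota_{k_n}^B(\tilde f_n(a)) = f(\iota_n^A(a))$, so it equals $f$ by the universal property. The main obstacle I expect is the consolidation step in the factorization lemma: turning the merely-existing preimages of the finitely many points of $A_n$ into a single strict factorization at a common stage, and realizing all required equalities at a single finite level. Both rely crucially on the finiteness of the $A_n$ and $B_m$ (giving decidable equality); once that is in place, the rest is a routine application of \Cref{axDependentChoice}.
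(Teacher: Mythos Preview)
Your argument is correct and is exactly what the paper intends: the lemma is stated immediately after the sentence ``Using dependent choice, we have the following results'' with no further proof given, and your tower-of-partial-lifts construction is the standard way to unpack that phrase. The only point to tighten is that your inductive step enforces $k_{n+1}\geq k_n$ but not cofinality of $(k_n)$ in $\N$; simply require in addition $k_n\geq n$ at each stage (this is free, since you may always post-compose with a further bonding map) so that $\mathrm{colim}_n B_{k_n}\simeq B$ as you later use.
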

  \begin{lemma}\label{lemDecompositionOfEpiMonoFactorization}
    For 
    $f:A\to B$ a sequential colimit of maps of finite sets $f_n:A_n \to B_n$, we have 
      that the factorisation $A \twoheadrightarrow Im(f) \hookrightarrow B$ is the 
      sequential colimit of the factorisations 
      $A_n \twoheadrightarrow Im(f_n) \hookrightarrow B_n$. 
    \end{lemma}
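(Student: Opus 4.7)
The plan is to verify that the two maps produced as sequential colimits of the pointwise factorisations coincide with the (surjection, embedding) factorisation of $f$, using that image factorisations are unique.

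First, I would check functoriality: a commuting square of finite sets
\[
\begin{tikzcd}
A_n \arrow[r] \arrow[d,"f_n"'] & A_{n+1} \arrow[d,"f_{n+1}"] \\
B_n \arrow[r] & B_{n+1}
\end{tikzcd}
\]
induces a unique map $\mathrm{Im}(f_n) \to \mathrm{Im}(f_{n+1})$ compatible with the epi-mono factorisations, by the universal property of image factorisation in sets. This produces a sequential diagram $(\mathrm{Im}(f_n))_n$; write $I$ for its sequential colimit. By functoriality of sequential colimits I obtain a factorisation $A \to I \to B$ whose composite is $f$.

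Next I would show that $A \to I$ is surjective and $I \to B$ is an embedding, invoking the cited Corollary~7.7 of \cite{SequentialColimitHoTT} so that $A$, $I$, $B$ are sets whose underlying sets are computed as ordinary filtered colimits of sets. For surjectivity, an element of $I$ is merely represented by some $y_n : \mathrm{Im}(f_n)$; since $A_n \twoheadrightarrow \mathrm{Im}(f_n)$ is a surjection between finite sets, it has a (merely existing) preimage $x_n : A_n$, whose class $[x_n] : A$ lands on $[y_n]$. For embeddedness, if $[y_n], [y'_m] : I$ have equal images in $B$, then by the set-theoretic description of the sequential colimit of $(B_n)_n$ they must become equal at some stage $B_k$ for $k \geq n,m$; the injectivity of $\mathrm{Im}(f_k) \hookrightarrow B_k$ then forces their images in $\mathrm{Im}(f_k)$ to agree, so they agree in $I$.

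Finally, since $A \twoheadrightarrow I \hookrightarrow B$ is a surjection followed by an embedding with composite $f$, uniqueness of the image factorisation in the universe of sets gives $I \simeq \mathrm{Im}(f)$, which is the claim.

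The main obstacle is the handling of the HoTT-level characterisation of sequential colimits of sets, in particular the fact that such colimits commute with propositional truncation (so that every element at the colimit genuinely comes from some stage, and two classes that become equal at the colimit do so at a finite stage). This is not formal and is precisely what is supplied by the invoked result on sequential colimits, together with dependent choice as used for \Cref{lemDecompositionOfColimitMorphisms}. Once that fact is in hand, the rest of the argument reduces to set-level reasoning.
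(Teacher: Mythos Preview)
Your argument is correct. The paper does not actually supply a proof of this lemma: it merely records it, together with \Cref{lemDecompositionOfColimitMorphisms}, under the preamble ``Using dependent choice, we have the following results,'' and leaves the verification to the reader. Your sketch is exactly the natural expansion: form the colimit $I$ of the $\mathrm{Im}(f_n)$, check that $A\to I$ is surjective and $I\to B$ is an embedding using the set-level description of sequential colimits from \cite{SequentialColimitHoTT}, and conclude by uniqueness of epi--mono factorisations.

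One small remark: your final paragraph suggests dependent choice is needed here as in \Cref{lemDecompositionOfColimitMorphisms}, but in fact the sequence $(f_n)_n$ is already given as a hypothesis, and your argument only uses that elements and identifications in a sequential colimit of sets arise at a finite stage (the cited result on sequential colimits). Dependent choice is essential for \Cref{lemDecompositionOfColimitMorphisms}, where one must \emph{construct} such a sequence, but the present lemma does not require it; the paper's blanket attribution of both lemmas to dependent choice is slightly imprecise on this point.
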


\begin{corollary}\label{decompositionInjectionSurjectionOfOdisc}
  An injective (resp. surjective) map between overtly discrete types 
  is a sequential colimit of injective (resp. surjective) maps between finite sets. 
\end{corollary}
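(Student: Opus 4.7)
The plan is to chain the two preceding lemmas, exploiting the fact that for an injective (resp.\ surjective) map between sets the epi-mono factorisation is trivial on the surjective (resp.\ injective) side. Concretely, given $f:A\to B$ either injective or surjective with $A,B:\ODisc$, I would first apply \Cref{lemDecompositionOfColimitMorphisms} to present $f$ as the sequential colimit of a map of diagrams $f_n:A_n\to B_n$ between finite sets, and then invoke \Cref{lemDecompositionOfEpiMonoFactorization} to obtain the factorisation $A \twoheadrightarrow Im(f) \hookrightarrow B$ as the sequential colimit of the factorisations $A_n \twoheadrightarrow Im(f_n) \hookrightarrow B_n$. In particular the finite sets $Im(f_n)$ assemble into a sequential diagram whose colimit is $Im(f)$, and the inclusions into $B_n$ (resp.\ the surjections out of $A_n$) form a map of diagrams.

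If $f$ is injective, the surjective half $A \twoheadrightarrow Im(f)$ of its factorisation is both surjective and injective, hence an equivalence of sets, so identifying $A$ with $Im(f)$ along this equivalence exhibits $f$ as the sequential colimit of the injections $Im(f_n)\hookrightarrow B_n$ between finite sets. Dually, if $f$ is surjective, the injective half $Im(f)\hookrightarrow B$ is an equivalence, and identifying $B$ with $Im(f)$ exhibits $f$ as the sequential colimit of the surjections $A_n\twoheadrightarrow Im(f_n)$.

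The only piece of bookkeeping to verify is that, once one side of the factorisation has been turned into an equivalence, the remaining side really recovers $f$ up to the chosen identification on colimits. This is immediate from the equivalence of sequential diagrams produced by \Cref{lemDecompositionOfEpiMonoFactorization}, so no serious obstacle remains; the real content of the corollary is already packaged in the preceding lemma, and this proof is principally an unpacking.
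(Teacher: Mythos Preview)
Your proposal is correct and matches the paper's intended approach: the corollary is stated without proof immediately after \Cref{lemDecompositionOfColimitMorphisms} and \Cref{lemDecompositionOfEpiMonoFactorization}, and your argument is precisely the unpacking the reader is expected to supply. The only thing worth noting is that the replacement of the diagram $(A_n)$ by $(Im(f_n))$ (resp.\ $(B_n)$ by $(Im(f_n))$) is legitimate because the colimit of the surjective (resp.\ injective) half is an equivalence, so the universal property transfers; you already flag this bookkeeping point.
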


\subsection{Closure properties of $\ODisc$}


We can get the following result using \Cref{lemDecompositionOfColimitMorphisms} and dependent choice.

\begin{lemma}\label{ODiscClosedUnderSequentialColimits}
  Overtly discrete types are closed under sequential colimits. 
\end{lemma}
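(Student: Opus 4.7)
The plan is to realize the sequential colimit of overtly discrete types as a sequential colimit of finite sets by a diagonal construction on a bi-indexed grid. Given a sequence $(X_n)_{n:\N}$ of overtly discrete types with transition maps $f_n:X_n\to X_{n+1}$, I would first build, for each $n:\N$, a presentation $X_n \simeq \colim_k A_{n,k}$ with $A_{n,k}$ finite, together with coherent vertical maps $v_{n,k}:A_{n,k}\to A_{n+1,k}$ whose induced map on horizontal colimits is $f_n$. This produces a commuting grid of finite sets whose rows present the $X_n$ and whose columns assemble the $f_n$.

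The grid is constructed by induction on $n$ using \Cref{axDependentChoice}. At stage $n$, having a presentation of $X_n$ as $\colim_k A_{n,k}$, apply \Cref{lemDecompositionOfColimitMorphisms} to $f_n$ to obtain some presentation $X_n \simeq \colim_k A'_{n,k}$, some presentation $X_{n+1} \simeq \colim_k A_{n+1,k}$, and maps $A'_{n,k}\to A_{n+1,k}$ colimiting to $f_n$. The presentations $A_{n,k}$ and $A'_{n,k}$ of $X_n$ must then be reconciled; this is handled by interleaving, using that any two presentations of the same overtly discrete set admit a common refinement (passing to a cofinal subsystem of the product), which is a routine application of DC to extract the reconciling choices. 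This inductive data, being a choice sequence of finite-set-valued compatible structures, is organized into an honest bi-indexed grid by \Cref{axDependentChoice}.

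Next, I would take the diagonal $D_n := A_{n,n}$ equipped with the composite transition $A_{n,n}\xrightarrow{h_{n,n}} A_{n,n+1}\xrightarrow{v_{n,n+1}} A_{n+1,n+1}$, which agrees with $v_{n,n}$ followed by $h_{n+1,n}$ by commutativity of the grid. Each $D_n$ is finite, so $\colim_n D_n$ is overtly discrete by definition. Finally, I would check that $\colim_n D_n \simeq \colim_n X_n$ using that for a commuting $\N\times\N$ grid, the diagonal colimit agrees with the iterated colimit; the diagonal $\{(n,n)\}$ is cofinal in $\N\times\N$, so this identification follows from the standard interchange result for sequential colimits in HoTT (the reference \cite{SequentialColimitHoTT} provides the required machinery).

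The main obstacle is the coherent construction of the grid in the first step: \Cref{lemDecompositionOfColimitMorphisms} produces presentations that are not a priori compatible across different $n$, so one has to carefully use dependent choice to thread together the reconciliations at every stage. Once the grid is in place, the diagonal argument and the cofinality-based interchange are standard. An alternative, perhaps cleaner, route would be to observe that $\colim_n X_n$ is itself an iterated sequential colimit of finite sets and appeal directly to a bi-sequential Fubini statement; but this ultimately requires the same coherent choice, so the essential difficulty is unchanged.
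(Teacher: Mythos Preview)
Your proposal is correct and follows exactly the approach the paper indicates: the paper's proof is only the one-line hint ``using \Cref{lemDecompositionOfColimitMorphisms} and dependent choice,'' and you have faithfully unpacked this into the grid-then-diagonal argument, including correctly flagging the reconciliation of presentations as the place where dependent choice does the real work. There is nothing to add on the comparison side; if anything, your write-up is more detailed than what the paper provides.
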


We have that $\Sigma$-types, identity types and propositional truncation commutes with sequential colimits (Theorem 5.1, Theorem 7.4 and Corollary 7.7 in \cite{SequentialColimitHoTT}). Then by closure of finite sets under these constructors, we can get the following: 

\begin{lemma}\label{OdiscSigma}
  Overtly discrete types are closed under $\Sigma$-type, identity type and propositional truncation.
\end{lemma}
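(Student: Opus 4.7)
The plan is to treat all three constructors uniformly. Given overtly discrete input, present it as a sequential colimit of finite sets, use the cited commutation theorems from \cite{SequentialColimitHoTT} to push the constructor inside the colimit, check that each finite stage is still overtly discrete (ideally even finite), and then invoke \Cref{ODiscClosedUnderSequentialColimits}.

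For propositional truncation, if $A$ is overtly discrete, write $A$ as a sequential colimit of finite sets $A_n$. By Corollary 7.7 of \cite{SequentialColimitHoTT}, $\propTrunc{A}$ is the sequential colimit of the $\propTrunc{A_n}$, and each $\propTrunc{A_n}$ is the truncation of a finite set and is therefore a decidable proposition, hence finite. For identity types, let $a,b:A$ with $A$ the sequential colimit of finite sets $A_n$. Since colimits of sets are computed on underlying sets, every element factors through some stage, so after passing to a common index we may assume $a$ and $b$ come from the same $A_n$, say $a=\iota_n(a')$ and $b=\iota_n(b')$. Theorem 7.4 of \cite{SequentialColimitHoTT} then identifies $a=_A b$ with the sequential colimit of the identity types $f_{kn}(a')=_{A_k}f_{kn}(b')$ for $k\ge n$, each of which is a decidable proposition on a finite set and hence finite.

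For $\Sigma$-types, let $A$ be overtly discrete with presentation $(A_n)$, and let $B:A\to\ODisc$. Theorem 5.1 of \cite{SequentialColimitHoTT} presents $\Sigma_{a:A}B(a)$ as the sequential colimit of the types $\Sigma_{a:A_n}B(\iota_n(a))$. For each $n$, since $A_n$ is finite, this type is a finite iterated binary coproduct of overtly discrete types. Binary coproducts of overtly discrete types are overtly discrete: if $C\simeq \mathrm{colim}\,C_k$ and $D\simeq\mathrm{colim}\,D_k$ with $C_k,D_k$ finite, then $C+D\simeq \mathrm{colim}(C_k+D_k)$ because coproducts commute with sequential colimits, and $C_k+D_k$ is finite. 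So each stage $\Sigma_{a:A_n}B(\iota_n(a))$ is overtly discrete, and \Cref{ODiscClosedUnderSequentialColimits} concludes the argument.

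The main obstacle is the bookkeeping in the $\Sigma$-case: the family $B$ is defined over the whole colimit, so one first has to use $\Sigma$-commutation to transport the family to the finite stages before the finite-$\Sigma$ can be decomposed into iterated coproducts; this is the only place where the argument is not a one-step application of a commutation lemma. The identity-type case also requires the small but essential step of lifting two elements of the colimit to a common finite stage, which relies on the explicit set-level description of sequential colimits.
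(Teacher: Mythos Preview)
Your proof is correct and follows the same strategy as the paper: commute each constructor with the sequential colimit via the cited results from \cite{SequentialColimitHoTT}, then reduce to finite stages. Your treatment of the $\Sigma$-case is in fact more careful than the paper's terse appeal to ``closure of finite sets under these constructors'', since the fibers $B(\iota_n(a))$ at a finite stage are only overtly discrete, and your detour through finite coproducts together with \Cref{ODiscClosedUnderSequentialColimits} is exactly what is needed to close that gap.
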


\subsection{$\Open$ and $\ODisc$} 
\begin{lemma}\label{PropOpenIffOdisc}
  A proposition is open if and only if it is overtly discrete.
\end{lemma}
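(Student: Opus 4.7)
The plan is to prove each direction separately, using decidable inhabitedness of finite sets and the commutation of propositional truncation with sequential colimits cited just above \Cref{OdiscSigma}.

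\emph{Forward direction.} Suppose $P$ is open, so $P\leftrightarrow\exists_{n:\N}\alpha_n=0$ for some $\alpha:2^{\N}$. I would exhibit $P$ as a sequential colimit of finite sets as follows. Set $Q_n:=\bigvee_{k\leq n}(\alpha_k=0)$; each $Q_n$ is a decidable proposition, hence equal to $\top$ or $\bot$, which are finite sets. The implications $Q_n\to Q_{n+1}$ give a chain. A sequential colimit of a chain of propositions is itself a proposition (any two elements come from a common stage $Q_n$, where they must coincide), and this colimit is inhabited iff some $Q_n$ is, iff $\exists_{n:\N}\alpha_n=0$, iff $P$. By univalence $P$ is this colimit, so $P$ is overtly discrete.

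\emph{Backward direction.} Suppose $P$ is overtly discrete, so that $P$ is merely equal to $\lim_n F_n$ for some sequence of finite sets $F_n$. Finiteness of $F_n$ makes $\propTrunc{F_n}$ decidable, so pick $\beta_n:2$ with $\beta_n=0\leftrightarrow\propTrunc{F_n}$. Since $P$ is a proposition it equals its own truncation, and the commutation of propositional truncation with sequential colimits yields
\[P\leftrightarrow\propTrunc{\lim_n F_n}\leftrightarrow\exists_{n:\N}\propTrunc{F_n}\leftrightarrow\exists_{n:\N}\beta_n=0,\]
so $P$ is open.

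The only real subtlety is the middle equivalence above: after commuting the truncation past the colimit one is left with a sequential colimit of decidable propositions, which (by the same reasoning used in the forward direction) is a proposition equivalent to the countable disjunction $\exists_{n:\N}\propTrunc{F_n}$. Everything else is routine decidability bookkeeping.
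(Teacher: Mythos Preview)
Your argument is correct and follows essentially the same route as the paper: for open $\Rightarrow$ overtly discrete you take the finite partial disjunctions $Q_n=\bigvee_{k\leq n}(\alpha_k=0)$ as the chain of finite sets, exactly as the paper does; for overtly discrete $\Rightarrow$ open you reduce to $P\leftrightarrow\exists_{n:\N}\propTrunc{F_n}$ and use decidability of $\propTrunc{F_n}$, which is precisely the paper's argument (you are simply more explicit about why that equivalence holds, invoking the commutation of truncation with sequential colimits that the paper cites just before \Cref{OdiscSigma}). One cosmetic remark: you write $\lim_n F_n$ where the paper reserves $\lim$ for sequential limits, so it would be clearer to write ``$\mathrm{colim}_n F_n$'' or simply ``the sequential colimit of the $F_n$''.
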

\begin{proof}
  If $P$ is overtly discrete, then $P\leftrightarrow \exists_{n:\N} \propTrunc{F_n}$ with $F_n$ finite sets. 
  But a finite set being merely inhabited is decidable, hence $P$ is a countable disjunction of decidable propositions, hence open.
  Suppose $P\leftrightarrow \exists_{n:\N} \alpha_n = 1$. 
  Let $P_n = \exists_{n\leq k} (\alpha_n = 1)$, which is a decidable proposition, hence a finite set. 
  Then the colimit of $P_n$ is $P$. 
\end{proof}

\begin{corollary}\label{OpenDependentSums}
  Open propositions are closed under sigma types. 
\end{corollary}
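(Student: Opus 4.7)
The plan is to reduce the statement to the already-established closure properties of overtly discrete types. Concretely, suppose $P$ is an open proposition and $Q : P \to \Open$ is a family of open propositions. I want to show that $\Sigma_{p:P} Q(p)$ is an open proposition.

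First I would observe that $\Sigma_{p:P} Q(p)$ is indeed a proposition: since $P$ is a proposition and each $Q(p)$ is a proposition, the usual argument (any two elements are equal componentwise) applies. So it suffices to exhibit it as an open proposition.

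Next, by \Cref{PropOpenIffOdisc}, the hypothesis ``$P$ is open'' and ``each $Q(p)$ is open'' can be restated as ``$P$ is overtly discrete'' and ``each $Q(p)$ is overtly discrete''. Then by \Cref{OdiscSigma}, overtly discrete types are closed under $\Sigma$-types, so $\Sigma_{p:P} Q(p)$ is overtly discrete. Applying \Cref{PropOpenIffOdisc} in the other direction then shows that this proposition is open, completing the proof.

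I do not expect any real obstacle here: all the work was already done in proving \Cref{PropOpenIffOdisc} (which packages the characterization of open propositions as countable disjunctions of decidable propositions in terms of colimits of finite sets) and \Cref{OdiscSigma} (which gives closure under $\Sigma$ for overtly discrete types via commutation of $\Sigma$ with sequential colimits). The corollary is then essentially the composition of these two facts, once one notes that being a proposition is preserved by the $\Sigma$ construction in our situation.
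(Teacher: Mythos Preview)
Your proof is correct and matches the paper's intended argument: the corollary is placed immediately after \Cref{PropOpenIffOdisc} with no explicit proof, and the only reasonable way to read it is exactly what you wrote---translate open propositions to overtly discrete ones via \Cref{PropOpenIffOdisc}, apply closure of $\ODisc$ under $\Sigma$ from \Cref{OdiscSigma}, and translate back.
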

\begin{corollary}[transitivity of openness]\label{OpenTransitive}
  Let $T$ be a type, let $V\subseteq T$ open and let $W\subseteq V$ open. 
  Then $W\subseteq T$ is open as well. 
\end{corollary}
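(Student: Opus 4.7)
The plan is to reduce transitivity of openness at the level of subtypes to the already-established closure of open propositions under $\Sigma$-types (\Cref{OpenDependentSums}). Unfolding the definitions, I have $V : T \to \Prop$ with each $V(t)$ open, and $W$ is a subtype of $\sum_{t:T}V(t)$, so concretely $W$ is a family of propositions $W(t,v)$ indexed by $t:T$ and $v:V(t)$, each of which is open. The composite embedding $W \hookrightarrow V \hookrightarrow T$ realises $W$ as the subtype of $T$ whose fibre over $t:T$ is
\[
  W_T(t) \;:=\; \sum_{v:V(t)} W(t,v).
\]
Since $V(t)$ is a proposition, this $\Sigma$-type is also a proposition (so it really is a subtype), and my job is to show it is open for every $t$.

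Fix $t:T$. By hypothesis $V(t)$ is open, and for each $v:V(t)$ the proposition $W(t,v)$ is open. This is exactly the data needed to invoke \Cref{OpenDependentSums}: a proposition built as a dependent sum $\sum_{v:V(t)} W(t,v)$ of open propositions over the open proposition $V(t)$ is again open. Applying this pointwise in $t$ yields that $W_T \subseteq T$ is open, which is the desired conclusion.

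There is no real obstacle here; the corollary is essentially a direct consequence of \Cref{OpenDependentSums}, with the only bookkeeping being the identification of the composite embedding $W \hookrightarrow T$ with the family $t \mapsto \sum_{v:V(t)} W(t,v)$, which is routine from the convention fixed in the preliminary remark about switching between subtypes and their corresponding embeddings.
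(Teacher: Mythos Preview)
Your proposal is correct and follows exactly the intended approach: the paper states this as an immediate corollary of \Cref{OpenDependentSums} without further argument, and your unfolding of the composite embedding as $t\mapsto \sum_{v:V(t)} W(t,v)$ together with the pointwise application of \Cref{OpenDependentSums} is precisely the routine verification that justifies this.
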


\begin{remark}\label{OpenDominance}
  It follows from  Proposition 2.25 of \cite{SyntheticTopologyLesnik} that 
  $\Open$ is a dominance in the setting of synthetic topology. 
\end{remark}

\begin{lemma}\label{OdiscQuotientCountableByOpen}\label{ODiscEqualityOpen}
  A type $B$ is overtly discrete if and only if it merely is the quotient of a countable set by an open equivalence relation. 
\end{lemma}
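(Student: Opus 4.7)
The plan is to prove both directions by converting between sequential colimit presentations of overtly discrete sets and presentations as set-quotients of countable sets.

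For the forward direction, I would suppose $B$ is overtly discrete, so it is the sequential colimit of a sequence of finite sets $(B_n)_{n:\N}$. Take $C := \sum_{n:\N} B_n$, which is countable as a $\Sigma$-type over $\N$ with finite fibres. The canonical map $C \to B$ is surjective, so $B$ is the set-quotient of $C$ by the kernel relation $\sim$ given by ``$(n,x) \sim (m,y)$ iff they have the same image in $B$''. Since $B$ is overtly discrete and hence a set, its identity types are overtly discrete by \Cref{OdiscSigma}, and as propositions they are then open by \Cref{PropOpenIffOdisc}. Hence $\sim$ is an open equivalence relation, witnessing $B$ as a quotient of a countable set by an open equivalence relation.

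For the reverse direction, suppose $B = C/\sim$ with $C$ countable and $\sim$ open; the goal is to exhibit $B$ as a sequential colimit of finite sets. Without loss of generality $C$ is a decidable subset of $\N$, filtered by the finite sets $C_n := C \cap \{0,\ldots,n\}$ with union $C$. Applying countable choice (a consequence of \Cref{axDependentChoice}) to the open family $(x \sim y)_{x,y : C}$, I would extract a binary double-indexed family $\beta$ with $x \sim y \leftrightarrow \exists_{k:\N}\, \beta_{x,y,k} = 1$. Define the decidable relation $R_n(x,y) := \exists_{k \leq n}\, \beta_{x,y,k} = 1$ on $C_n$, and let $\sim_n$ be the equivalence relation generated by $R_n$ on the finite set $C_n$; this is decidable, so $B_n := C_n / \sim_n$ is finite. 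The inclusions $C_n \hookrightarrow C_{n+1}$ respect the relations (since $R_n \subseteq R_{n+1}$), yielding transition maps $B_n \to B_{n+1}$.

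It remains to identify $B$ with the sequential colimit of the $B_n$. Surjectivity of the induced map to $B$ is immediate from $C = \bigcup_n C_n$. For identification of fibres: given $x, y \in C_n$ with $x \sim y$, some witness $\beta_{x,y,k} = 1$ exists, so $R_m(x,y)$ holds for $m = \max(n,k)$, giving $x \sim_m y$ and identification in $B_m$. Conversely $x \sim_m y$ implies $x \sim y$ by transitivity, since $R_m$ is pointwise contained in $\sim$. The main obstacle is this reverse direction, specifically handling the fact that the decidable approximations $R_n$ are not themselves equivalence relations: generating $\sim_n$ at each finite stage is forced in order to form a set-quotient there, and one must verify that this finite closure does not overshoot $\sim$ in the colimit, which works because each $R_m$ is still contained in $\sim$ pointwise.
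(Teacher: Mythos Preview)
Your proof is correct and follows essentially the same approach as the paper's: the paper also takes $\Sigma_{n:\N}B_n$ for the forward direction and, for the converse, uses dependent choice to extract witnesses $\alpha$, defines $D_n = D\cap\N_{\leq n}$ with $R_n$ the equivalence relation generated by $\exists_{k\leq n}\,\alpha_{x,y}(k)=1$, and asserts that $B$ is the colimit of the $D_n/R_n$. You supply more detail than the paper does---in particular your justification that equality in $B$ is open (via \Cref{OdiscSigma} and \Cref{PropOpenIffOdisc}) and your verification that the equivalence closure of $R_n$ does not overshoot $\sim$ are exactly the points the paper leaves implicit.
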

\begin{proof}
  If $B:\ODisc$ is the sequential colimit of finite sets $B_n$, 
  then $B$ is an open quotient of $ (\Sigma_{n:\N} B_n)$.
%
  Conversely, assume $B= D/R$ with $D\subseteq \N$ decidable and $R$ open. 
  By dependent choice we get $\alpha:D \to D \to 2^\N$ such that 
  $R(x,y)\leftrightarrow \exists_{k:\N}\alpha_{x,y}(k) = 1$. 
  Define $D_n = (D \cap \N_{\leq n})$, and $R_n : D_n \to D_n \to 2$ so that 
  $R_n(x,y)$ is the equivalence relation generated by the relation 
  $\exists_{k\leq n} \alpha_{x,y}(k) =1$. 
  Then the $B_n = D_n/R_n$ are finite sets, and have colimit $B$. 
\end{proof}

\subsection{Relating $\ODisc$ and $\Boole$}
\begin{lemma}\label{BooleIsODisc}
  Every countably presented Boolean algebra is merely a sequential colimit of finite Boolean algebras. 
\end{lemma}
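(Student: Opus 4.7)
The plan is to exhibit $B$ explicitly as a sequential colimit of finite Boolean algebras. By \Cref{BooleAsCQuotient}, we may assume (merely) that $B = 2[\N]/(r_n)_{n:\N}$ for some sequence $r_n : 2[\N]$. Each $r_n$ is built from a finite set of generators, so by replacing the sequence with a cofinal reindexing we may pick $N : \N \to \N$ monotone with $N(n) \geq n$ such that $r_0,\ldots,r_{n-1}$ all lie in the subalgebra $2[g_0,\ldots,g_{N(n)-1}]$.

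Next I would define the approximants
\[
  B_n \;:=\; 2[g_0,\ldots,g_{N(n)-1}]/(r_0,\ldots,r_{n-1}),
\]
which are finitely presented Boolean algebras, hence finite sets (every element is, in disjunctive normal form, a finite join of atoms, and there are only finitely many such expressions). The inclusion of generators induces Boolean morphisms $\iota_n : B_n \to B_{n+1}$, yielding a sequential diagram of finite sets. I would write $B_\omega$ for its sequential colimit, and use the generator-preserving maps $B_n \to B$ to obtain a canonical map $\varphi : B_\omega \to B$.

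To finish, I would verify that $\varphi$ is an isomorphism. Since sequential colimits of sets commute with finite products and with finite quotients, $B_\omega$ inherits a Boolean algebra structure from the $B_n$, and the Boolean operations on $B$ are detected level-wise on the images of the $B_n$. Surjectivity of $\varphi$ is immediate: every element of $B$ is represented by an element of $2[g_0,\ldots,g_{k}]$ for some $k$, which lifts to $B_n$ for any $n$ with $N(n) > k$. For injectivity, if two elements of some $B_n$ agree in $B$, then the witnessing equality in $2[\N]/(r_m)_{m:\N}$ uses only finitely many relations $r_0,\ldots,r_{m-1}$ and finitely many extra generators, so they already coincide in $B_{\max(n,m)}$, hence in $B_\omega$.

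The main obstacle is the bookkeeping in this last step: controlling, from a single equality in $B$, the finite amount of data (both extra generators and extra relations) needed to witness it at some finite stage. This is essentially the standard fact that finitary algebraic structures are compact with respect to filtered colimits, and here it reduces to the observation that any derivation of an equation in $2[\N]/(r_n)_{n:\N}$ uses only finitely many $r_n$'s, together with the fact that the bound $N$ was chosen monotone and cofinal so that both the generator and relation indices can be synchronised at a common stage.
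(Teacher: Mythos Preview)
Your proposal is correct and follows essentially the same approach as the paper: write $B = 2[\N]/(r_n)_{n:\N}$, take at each stage the finite Boolean algebra generated by enough generators to contain the first few relations (and at least the first $n$ generators), and observe that the resulting sequence has colimit $B$. The paper's $G_n$ is defined as $\{g_i \mid i\leq n\}$ together with the generators occurring in $r_0,\dots,r_n$, which is a minor variant of your monotone bound $N(n)$; your additional surjectivity/injectivity verification simply spells out what the paper leaves implicit in ``Then $B$ is the colimit of this sequence.''
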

\begin{proof}
  Consider a countably presented Boolean algebra of the form $B = 2[\N]/(r_n)_{n:\N}$. 
  For each $n:\N$, let $G_n$ be the union of $\{g_i\ |\ {i\leq n}\}$ and 
  the finite set of generators occurring in $r_i$ for some $i\leq n$. 
  Denote $B_n = 2[G_n]/(r_i)_{i\leq n}$. 
  Each $B_n$ is a finite Boolean algebra, and there are canonical maps $B_n \to B_{n+1}$.
  Then $B$ is the colimit of this sequence. 
%
%
\end{proof}

\begin{corollary}\label{ODiscBAareBoole}
  A Boolean algebra $B$ is overtly discrete if and only if it is countably presented. 
\end{corollary}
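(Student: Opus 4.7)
\emph{Forward direction.} By \Cref{BooleIsODisc}, if $B$ is countably presented then it is merely a sequential colimit of finite Boolean algebras $B_n$. Each $B_n$ has a finite underlying set, and the sequential colimit of Boolean algebras computes the colimit of underlying sets (sequential colimits in a variety of algebras agree with the colimit on underlying sets, and sequential colimits of sets remain sets). Hence $B$ is a sequential colimit of finite sets, i.e.\ overtly discrete.

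\emph{Backward direction, setup.} Now assume $B$ is an overtly discrete Boolean algebra. Writing $B = \mathrm{colim}_n B_n$ with each $B_n$ finite, the disjoint union $\Sigma_{n:\N} B_n$ is countable and surjects onto $B$; in particular we obtain a surjective family $b : \N \twoheadrightarrow B$. The universal property of $2[\N]$ extends $b$ to a Boolean morphism $\phi : 2[\N] \to B$ with $\phi(g_n) = b_n$, and since the $b_n$ cover $B$, $\phi$ is surjective. To exhibit $B$ as countably presented it therefore suffices to produce a countable family $(r_j)_{j:J}$ in $2[\N]$ enumerating the kernel $K := \{\,r : 2[\N] \mid \phi(r) =_B 0\,\}$, for then $B \cong 2[\N]/(r_j)_{j:J}$ by the first isomorphism theorem for Boolean algebras.

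\emph{Enumerating the kernel.} The key observation is that equality in the overtly discrete set $B$ is an open proposition: identity types of overtly discrete types are overtly discrete by \Cref{OdiscSigma}, and \Cref{PropOpenIffOdisc} identifies overtly discrete propositions with open ones. Hence $\phi(r) =_B 0$ is open, and in particular overtly discrete, for each $r$. Applying the already-established forward direction to $2[\N]$ itself shows $2[\N]$ is overtly discrete, and since $\ODisc$ is closed under $\Sigma$-types (\Cref{OdiscSigma}), the type $K = \Sigma_{r:2[\N]}(\phi(r) =_B 0)$ is overtly discrete. But any overtly discrete type admits a surjection from a countable set, namely the countable disjoint union of its approximating finite sets. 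This yields the desired countable enumeration $(r_j)_{j:J}$ of $K$, finishing the presentation.

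\emph{Where the work lies.} The conceptual core of the argument is showing that $K$ is overtly discrete, which hinges on the openness of equality in $B$ combined with closure of $\ODisc$ under $\Sigma$-types; I expect this to be the main step to get right. Everything else is bookkeeping, modulo the minor point that the empty-$K$ case is unproblematic because the definition of countable presentation permits $J = \emptyset$.
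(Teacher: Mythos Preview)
Your proof is correct and follows essentially the same strategy as the paper: produce a surjection $2[\N]\twoheadrightarrow B$ from the overt discreteness of $B$, observe that equality in $B$ is open, and then enumerate the kernel by a countable family. Where the paper invokes countable choice directly---extracting for each $b:2[\N]$ a sequence $\alpha_b$ witnessing the openness of $\phi(b)=_B 0$ and packaging these into an explicit family $r(b,k)$---you instead appeal to closure of $\ODisc$ under $\Sigma$-types (\Cref{OdiscSigma}) to conclude that $K$ itself is overtly discrete and then surject a countable set onto it; these are two presentations of the same argument.
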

\begin{proof}
  Assume $B:\ODisc$. 
  By \Cref{OdiscQuotientCountableByOpen}, we get a surjection $\N\twoheadrightarrow B$ and that $B$ has open equality. 
  Consider the induced surjective morphism $f:2[\N]\twoheadrightarrow B$.
  By countable choice, we get for each $b:2[\N]$
  a sequence $\alpha_{b}:2^\N$ such that 
  $(f(b) = 0)\leftrightarrow \exists_{k:\N} (\alpha_{b}(k) =1)$. 
  Consider 
  $r:2[\N] \to \N \to 2[\N]$ 
  given by 
  \[r(b,k) =\begin{cases}
    b &\text{ if } \alpha_{b}(k) = 1\\
    0   &\text{ if } \alpha_{b}(k) = 0
  \end{cases}
  \] 
  Then $B= 2[\N]/(r(b,k))_{b:2^\N,k:\N}$.
  \Cref{BooleIsODisc} gives the converse.
\end{proof}

\begin{remark}\label{BooleEpiMono}
  By \Cref{OdiscSigma} and \Cref{ODiscBAareBoole}, 
  it follows that any 
  $g:B\to C$ in $\Boole$ has an overtly discrete kernel.
  As a consequence, the kernel is enumerable and $B/Ker(g)$ is in $\Boole$. 
  By uniqueness of epi-mono factorizations and \Cref{SurjectionsAreFormalSurjections}, 
  the factorization 
  $B\twoheadrightarrow B/Ker(g) \hookrightarrow C$ corresponds to 
  $Sp(C) \twoheadrightarrow Sp(B/Ker(g)) \hookrightarrow Sp(B)$. 
\end{remark}
\begin{remark}\label{decompositionBooleMaps}
  Similarly to \Cref{lemDecompositionOfColimitMorphisms} and 
  \Cref{lemDecompositionOfEpiMonoFactorization}  a map (resp. surjection, injection) 
  in $\Boole$ is a sequential colimit of maps (resp. surjections, injections) between 
  finite Boolean algebras. 
\end{remark}

\section{Stone spaces}

\subsection{Stone spaces as profinite sets}
Here we present Stone spaces as sequential limits of finite sets. 
This is the perspective taken in Condensed Mathematics \cite{Condensed,Dagur,Scholze}.
Some of the results in this section are specific versions of the axioms used in 
\cite{bc24}. A full generalization is part of future work. 

\begin{lemma}
  Any $S:\Stone$ is merely a sequential limit of finite sets. 
\end{lemma}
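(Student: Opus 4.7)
The plan is to exploit the anti-equivalence between $\Boole$ and $\Stone$ (\Cref{SpIsAntiEquivalence}) together with the fact that every countably presented Boolean algebra is a sequential colimit of finite ones (\Cref{BooleIsODisc}).

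First I would write $S$ merely as $Sp(B)$ for some $B:\Boole$, which is allowed since $S$ is Stone and merely-equality is enough for the conclusion (which is itself a propositional truncation statement). Then by \Cref{BooleIsODisc}, $B$ is merely the sequential colimit of a chain of finite Boolean algebras $B_0 \to B_1 \to B_2 \to \cdots$. Apply $Sp$ to this diagram. Since $Sp$ is the object part of a dual equivalence of categories between $\Boole$ and $\Stone$ (\Cref{SpIsAntiEquivalence}), it sends sequential colimits in $\Boole$ to sequential limits in $\Stone$, so $Sp(B)$ is the sequential limit of the tower $\cdots \to Sp(B_2) \to Sp(B_1) \to Sp(B_0)$ in the category of Stone spaces.

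It remains to observe that each $Sp(B_n)$ is a finite set. This is elementary: for a finite Boolean algebra $B_n$, the set of Boolean morphisms $B_n \to 2$ is finite (for instance, $B_n \cong 2^k$ for some $k$, whose spectrum has exactly $k$ points, or more abstractly one uses that $B_n$ has only finitely many elements, so there are only finitely many functions $B_n \to 2$, a fortiori finitely many Boolean morphisms). Finally, the limit in $\Stone$ agrees with the limit of the underlying types, so $S = Sp(B)$ is indeed a sequential limit of finite sets in the sense required.

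The main subtlety is making sure the ``limit in $\Stone$'' really coincides with the limit of the underlying types of the tower; this is handled by \Cref{SpIsAntiEquivalence} which guarantees that $Sp$ takes colimits in $\Boole$ to limits of the opposite diagram at the level of types, so no additional work is needed. Everything else is a routine chase through the definitions.
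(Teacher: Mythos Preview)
Your proposal is correct and follows exactly the paper's approach: invoke \Cref{BooleIsODisc} to write $B$ as a sequential colimit of finite Boolean algebras, then use \Cref{SpIsAntiEquivalence} to turn this into a sequential limit of the finite sets $Sp(B_n)$. Your additional remarks about why $Sp(B_n)$ is finite and why the limit in $\Stone$ agrees with the limit of underlying types are reasonable elaborations, but the core argument is identical to the paper's one-line proof.
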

\begin{proof}
  Assume $B:\Boole$. By \Cref{SpIsAntiEquivalence} and \Cref{BooleIsODisc}, 
 we have that $Sp(B)$ is the sequential limit of the $Sp(B_n)$, which are finite sets. 
\end{proof}

\begin{lemma}\label{StoneAreProfinite}
  A sequential limit of finite sets is a Stone space. 
\end{lemma}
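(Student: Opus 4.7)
The plan is to dualise the given limit via the Stone duality equivalence stated in \Cref{SpIsAntiEquivalence}. Concretely, suppose $S$ is presented as a sequential limit of finite sets $F_n$ with transition maps $p_n : F_{n+1}\to F_n$. I want to exhibit a countably presented Boolean algebra $B$ whose spectrum recovers $S$.

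First I would put a Boolean structure on each $F_n$ by setting $B_n := 2^{F_n}$, which is a finite Boolean algebra (in particular countably presented, hence in $\Boole$). The transition maps $p_n$ induce Boolean morphisms $p_n^\ast : B_n \to B_{n+1}$, and via the dual equivalence $Sp$ of \Cref{SpIsAntiEquivalence} we have $Sp(B_n) = F_n$ with the spectra of the $p_n^\ast$ recovering the $p_n$. Now I would form $B$ as the sequential colimit of the $B_n$ in $\Boole$. Such a colimit exists and lands in $\Boole$: the underlying set is the sequential colimit of the underlying sets of the $B_n$, so by \Cref{ODiscClosedUnderSequentialColimits} it is overtly discrete, and by \Cref{ODiscBAareBoole} an overtly discrete Boolean algebra is countably presented. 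Thus $B : \Boole$.

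Finally, I would apply the duality: by \Cref{SpIsAntiEquivalence}, $Sp$ sends colimits in $\Boole$ to limits of the opposite diagram, so
\[
    Sp(B) \;=\; Sp\bigl(\operatorname{colim}_n B_n\bigr) \;=\; \lim_n Sp(B_n) \;=\; \lim_n F_n \;=\; S.
\]
Hence $S$ is merely equivalent to $Sp(B)$ for a countably presented $B$, so $S : \Stone$.

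The main obstacle I anticipate is being precise about where the colimit $\operatorname{colim}_n B_n$ lives and why it is again countably presented: one must verify that the sequential colimit computed on underlying types carries a Boolean algebra structure and is the colimit in $\Boole$, and then deduce countable presentation from the overtly discrete framework. Both pieces are cleanly handled by the preceding machinery (\Cref{ODiscClosedUnderSequentialColimits} plus \Cref{ODiscBAareBoole}), so once this identification is made, the duality finishes the argument.
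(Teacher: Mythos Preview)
Your argument is correct and is essentially the paper's own proof spelled out in detail: the paper simply observes that finite sets are Stone and that $\Stone$ is closed under sequential limits, citing \Cref{SpIsAntiEquivalence} and \Cref{ODiscClosedUnderSequentialColimits}, which is exactly the duality-plus-colimit argument you unpack (with the implicit use of \Cref{ODiscBAareBoole} to land back in $\Boole$). Your explicit identification of $B=\operatorname{colim}_n 2^{F_n}$ and the verification that it is countably presented is precisely the content the paper leaves tacit.
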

\begin{proof}
  By \Cref{SpIsAntiEquivalence} and 
  \Cref{ODiscClosedUnderSequentialColimits}, 
  we have that $\Stone$ is closed under sequential limits, and finite sets are Stone.
\end{proof}

\begin{corollary}
Stone spaces are stable under finite limits.
\end{corollary}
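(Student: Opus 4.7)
My approach is to dualize via \Cref{SpIsAntiEquivalence}: a finite limit in $\Stone$ is the spectrum of the corresponding finite colimit in $\Boole$, so the claim reduces to showing that $\Boole$ is closed under finite colimits inside the category of all Boolean algebras. Finite colimits are generated by the initial object, binary coproducts, and coequalizers, so it suffices to handle these three cases separately.

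First, the initial Boolean algebra $2$ lies in $\Boole$ trivially (empty sets of generators and relations), dual to the terminal Stone space $Sp(2) = \top$. Next, if $B$ and $C$ are presented as $2[\N]/(r_n)_{n:\N}$ and $2[\N]/(s_n)_{n:\N}$ via \Cref{BooleAsCQuotient}, then their coproduct in the category of Boolean algebras is $2[\N + \N]$ quotiented by the images of the $r_n$ together with the images of the $s_n$, which is again countably presented. Finally, the coequalizer of $f, g : B \to C$ is obtained by quotienting $C$ by the congruence generated by $f(x) = g(x)$; since a Boolean morphism is determined by its values on generators, it suffices to impose this relation on the countably many generators of $B$, so the coequalizer is a countably presented quotient of $C$.

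The only thing to verify is that each of these colimits genuinely lands in $\Boole$, i.e.\ is countably presented, which works because countable unions of countable sets are countable. An alternative, more intrinsic route would combine \Cref{ODiscClosedUnderSequentialColimits} and \Cref{StoneAreProfinite} with the fact that sequential limits commute with finite limits: every Stone space is a sequential limit of finite sets, and finite sets are closed under finite limits. The main obstacle on that path is coherently matching the sequential decompositions of several maps at once (needed for equalizers and pullbacks), which is precisely what the algebraic route sidesteps by working with presentations.
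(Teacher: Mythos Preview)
Your proposal is correct and matches the paper's intended argument. The paper states this corollary without proof, but its placement immediately after \Cref{StoneAreProfinite} and the explicit remark in \Cref{SpIsAntiEquivalence} that ``the spectrum of any colimit in $\Boole$ is the limit of the spectrum of the opposite diagram'' make clear that the duality route is what is meant; you have simply spelled out the verification that $\Boole$ is closed under finite colimits of Boolean algebras (initial object, binary coproducts, coequalizers), which is the only missing ingredient.
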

\begin{remark}\label{StoneClosedUnderPullback}\label{ProFiniteMapsFactorization}
  By \Cref{decompositionBooleMaps} and 
  \Cref{SurjectionsAreFormalSurjections}, maps (resp. surjections, injections) of Stone spaces
  are sequential limits of maps (resp. surjections, injections) of finite sets. 
%
%
%
%
\end{remark}

\begin{lemma}\label{ScottFiniteCodomain}
  For $(S_n)_{n:\N}$ a sequence of finite types with $S=\lim_nS_n$ and $k:\N$, we have that $\mathrm{Fin}(k)^{S}$ is the sequential colimit of $\mathrm{Fin}(k)^{S_n}$.
\end{lemma}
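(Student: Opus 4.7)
The plan is to transfer the question to Boolean algebras via Stone duality, identifying functions into $\mathrm{Fin}(k)$ with $k$-partitions of unity, and then exploit that identity types commute with sequential colimits of sets. By \Cref{AxStoneDuality} together with \Cref{SpIsAntiEquivalence}, writing $B := 2^S$ and $B_n := 2^{S_n}$, the given limit presentation $S = \lim_n S_n$ corresponds to a sequential colimit presentation $B = \mathrm{colim}_n B_n$ in $\Boole$, with coprojections $\iota_n : B_n \to B$.

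For any type $T$, I would identify $\mathrm{Fin}(k)^T$ with the set $\mathcal{P}_k(2^T)$ of $k$-tuples $(b_i)_{i<k}$ in $2^T$ satisfying $b_i \wedge b_j = 0$ for $i \neq j$ and $\bigvee_i b_i = 1$, via fibers of a function. Precomposition of $T$-functions corresponds to componentwise application of the pullback Boolean algebra map. Under these identifications, the canonical map $\phi : \mathrm{colim}_n \mathrm{Fin}(k)^{S_n} \to \mathrm{Fin}(k)^S$ becomes the map $\mathrm{colim}_n \mathcal{P}_k(B_n) \to \mathcal{P}_k(B)$ applying $\iota_n$ componentwise. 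Since both source and target are sets, it suffices to show $\phi$ is a bijection.

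For surjectivity: a partition $(b_i)_{i<k}$ in $\mathcal{P}_k(B)$ has each $b_i$ equal to $\iota_{n_i}(\beta_i)$ for some $\beta_i \in B_{n_i}$, so pushing forward to $B_N$ with $N = \max_i n_i$ gives a $k$-tuple in $B_N^k$ whose images in $B$ form the given partition. The partition equations $\beta_i \wedge \beta_j = 0$ and $\bigvee_i \beta_i = 1$ then hold in $B$ but possibly not yet in $B_N$. Since identity types commute with sequential colimits of sets (Theorem 7.4 of \cite{SequentialColimitHoTT}), each individual equation is witnessed at some stage $B_{m_r}$, and taking $m$ to be the maximum of these finitely many $m_r$ gives a stage where all partition conditions hold; this yields a preimage in $\mathcal{P}_k(B_m) = \mathrm{Fin}(k)^{S_m}$. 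For injectivity, two elements of the colimit with equal image in $\mathcal{P}_k(B)$ can be represented at a common stage $B_m$ by pushing the earlier one forward; their $k$ componentwise equalities in $B$ are each witnessed at some later stage by the same commutation principle, yielding a common stage at which the representatives coincide.

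The main obstacle is the bookkeeping — ensuring that all finitely many equations (partition conditions for surjectivity, componentwise equalities for injectivity) can be made to hold at a single common stage. Since only finitely many equations appear and each is individually witnessed at some finite stage, taking the maximum suffices. The only other subtlety is implicit in the identification between $\mathrm{colim}_n \mathcal{P}_k(B_n)$ and the description used above, which rests on the commutation of both identity types and finite products with sequential colimits of sets; once these standard facts are invoked in the right places, the argument goes through cleanly.
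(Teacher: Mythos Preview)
Your proof is correct and shares the paper's core idea: pass via Stone duality to the Boolean-algebra side where $S=\lim_n S_n$ becomes $2^S=\mathrm{colim}_n\,2^{S_n}$, then exploit finiteness of the codomain. The difference is packaging. You describe $\mathrm{Fin}(k)^T$ as the set $\mathcal{P}_k(2^T)$ of $k$-partitions of unity and then prove by hand that $\mathcal{P}_k$ commutes with sequential colimits, using commutation of identity types and finite products with such colimits. The paper instead observes that a $k$-partition of unity in $B$ is precisely a Boolean morphism $2^k\to B$, so $\mathrm{Fin}(k)^S=\Hom(2^k,2^S)$; since $2^k$ is finite, $\Hom(2^k,-)$ commutes with sequential colimits, and one is done in one line. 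Your argument is exactly the unfolded content of that compactness statement, so nothing is lost---but recognising $\mathcal{P}_k(B)=\Hom(2^k,B)$ would let you replace the explicit surjectivity/injectivity bookkeeping with a single appeal to the fact that finitely presented objects are compact.
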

\begin{proof}
  By \Cref{SpIsAntiEquivalence} we have $\mathrm{Fin}(k)^S = \Hom(2^{k},2^S)$.
  Since $2^{k}$ is finite, we have that $\Hom(2^k,\_)$ commutes with sequential colimits, therefore $\Hom(2^{k},2^S)$ is the colimit of $\Hom(2^{k},2^{S_n})$. 
  By applying \Cref{SpIsAntiEquivalence} again, 
  the latter type is $\mathrm{Fin}(k)^{S_n}$.
\end{proof}

\begin{lemma}\label{MapsStoneToNareBounded}
  For $S:\Stone$ and $f:S \to \N$, there exists some $k:\N$ such that $f$ factors through $\mathrm{Fin}(k)$. 
\end{lemma}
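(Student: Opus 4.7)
The plan is to convert $f$ into Boolean-algebra data via $B := 2^S$, identify a certain quotient of $B$ as trivial using Stone duality, and unpack what this forces.

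For each $k:\N$, the decidable predicate $s \mapsto (f(s) \geq k)$ cuts out a clopen subset $A_k \subseteq S$; by \Cref{AxStoneDuality} it corresponds to an element $a_k \in B$, and the inclusions $A_{k+1} \subseteq A_k$ give a descending chain $a_0 \geq a_1 \geq \cdots$ in $B$. Consider the countably presented Boolean algebra $B_\infty := B/(1-a_k)_{k:\N}$; by \Cref{SpIsAntiEquivalence} its spectrum is $\{s:S \mid \forall k:\N,\ f(s) \geq k\} = \bigcap_k A_k$, which is empty, because for any $s:S$ we have $s \notin A_{f(s)+1}$. Hence \Cref{SpectrumEmptyIff01Equal} gives $1 = 0$ in $B_\infty$.

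Triviality of $B_\infty$ says that $1$ lies in the ideal of $B$ generated by $\{1 - a_k : k:\N\}$. Standard Boolean-algebra ideal theory then produces a finite $F \subseteq \N$ with $\bigvee_{k \in F}(1 - a_k) = 1$, i.e.\ $\bigwedge_{k \in F} a_k = 0$; since the sequence $(a_k)$ is decreasing, this means $a_K = 0$ for $K$ large enough (degenerate case: $F = \emptyset$ forces $B$ itself trivial, hence $S$ empty, so any $K$ works). Via Stone duality this says $A_K$ is empty, so $\neg(f(s) \geq K)$ for every $s:S$; by decidability of the order on $\N$ we get $f(s) < K$ for all $s$, i.e.\ $f$ factors through $\mathrm{Fin}(K)$.

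The one delicate point is the Boolean-algebra manipulation passing from $1 = 0$ in $B/(r_k)_{k:\N}$ to the existence of a finite subfamily with $\bigvee_{k \in F} r_k = 1$. This is standard for Boolean rings but is not spelled out in the excerpt. An alternative phrasing staying closer to the paper's framework would write $B_\infty$ as the sequential colimit of the quotients $B/(1 - a_k)$ (along the surjections induced by $a_k \geq a_{k+1}$) and invoke \Cref{ODiscBAareBoole}: as $B_\infty$ is then overtly discrete, the equality $1 = 0$ in the colimit must already hold at some finite stage, again yielding $a_K = 0$.
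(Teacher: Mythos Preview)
Your proof is correct and follows essentially the same strategy as the paper's: form a quotient of $2^S$ whose spectrum is empty, invoke \Cref{SpectrumEmptyIff01Equal} to get $0=1$, and extract a finite witness. The only cosmetic difference is that the paper quotients by the fibers $f_n=\{s:f(s)=n\}$ rather than your tail sets $A_k=\{s:f(s)\geq k\}$, which makes the last step marginally shorter (one gets $\bigvee_{n\leq k}f_n=1$ directly, without the decreasing-chain argument).
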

\begin{proof}
  For each $n:\N$, the fiber of $f$ over $n$ is a decidable subset $f_n:S \to 2$. 
  We must have that $Sp(2^S/(f_n)_{n:\N}) = \bot$, hence there exists some $k:\N$ with 
  $\bigvee_{n\leq k} f_n =_{2^S} 1 $. 
  It follows that $f(s)\leq k$ for all $s:S$ as required. 
\end{proof}

\begin{corollary}\label{scott-continuity}
  For $(S_n)_{n:\N}$ a sequence of finite types with $S=\lim_nS_n$, we have that $\N^S$ is the sequential colimit of $\N^{S_n}$. 
\end{corollary}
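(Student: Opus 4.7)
The plan is to reduce this to a combination of the two immediately preceding lemmas, using the fact that $\N$ itself is a sequential colimit of finite sets together with a standard swap of sequential colimits.

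First I would observe that, since $\N$ is the sequential colimit of the finite sets $\mathrm{Fin}(k)$ along the canonical inclusions, the canonical map
\[
  \mathrm{colim}_k\, \mathrm{Fin}(k)^S \longrightarrow \N^S
\]
is an equivalence: it is injective because the transition maps $\mathrm{Fin}(k)^S\hookrightarrow\mathrm{Fin}(k+1)^S$ are monomorphisms, and it is surjective because \Cref{MapsStoneToNareBounded} tells us exactly that every $f:S\to\N$ factors through some $\mathrm{Fin}(k)$. The same argument, applied to each finite $S_n$ (for which the bounded factorisation is trivial), shows $\N^{S_n}=\mathrm{colim}_k\,\mathrm{Fin}(k)^{S_n}$.

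Next I would invoke \Cref{ScottFiniteCodomain}, which gives $\mathrm{Fin}(k)^S=\mathrm{colim}_n\,\mathrm{Fin}(k)^{S_n}$ for each fixed $k$, and assemble everything:
\[
  \N^S \;=\; \mathrm{colim}_k\,\mathrm{Fin}(k)^S \;=\; \mathrm{colim}_k\,\mathrm{colim}_n\,\mathrm{Fin}(k)^{S_n} \;=\; \mathrm{colim}_n\,\mathrm{colim}_k\,\mathrm{Fin}(k)^{S_n} \;=\; \mathrm{colim}_n\,\N^{S_n}.
\]
The final equality uses the previous paragraph applied to each $S_n$.

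The only step that needs care is the swap of the two sequential colimits in the middle. This is an instance of the Fubini-style commutation of filtered colimits with filtered colimits; for two $\N$-indexed sequences it can be established directly by comparing both iterated colimits with the diagonal $\mathrm{colim}_n\,\mathrm{Fin}(n)^{S_n}$, using that cofinal subsequences induce equivalences on sequential colimits. I expect this commutation to be the main obstacle in a formal write-up, but it is a purely type-theoretic fact that does not rely on any of the axioms of the paper, so it can either be cited from the literature on sequential colimits in HoTT or established by a short direct argument on the defining higher inductive types.
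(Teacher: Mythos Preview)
Your argument is correct and follows essentially the same route as the paper: the paper's proof also uses \Cref{MapsStoneToNareBounded} to write $\N^S$ as $\mathrm{colim}_k\,\mathrm{Fin}(k)^S$, then \Cref{ScottFiniteCodomain} to write each $\mathrm{Fin}(k)^S$ as $\mathrm{colim}_n\,\mathrm{Fin}(k)^{S_n}$, and finishes by swapping the two sequential colimits. You have simply spelled out the final identification $\mathrm{colim}_k\,\mathrm{Fin}(k)^{S_n}=\N^{S_n}$ and the colimit swap in more detail than the paper does.
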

\begin{proof}
  By \Cref{MapsStoneToNareBounded} we have that $\N^S$ is the sequential colimit of $\mathrm{Fin}(k)^S$. 
  By \Cref{ScottFiniteCodomain}, $\mathrm{Fin}(k)^S$ is the sequential colimit of the $\mathrm{Fin}(k)^{S_n}$ and we can swap the sequential colimits to conclude.
  \end{proof}

\subsection{$\Closed$ and $\Stone$}

\begin{corollary}\label{TruncationStoneClosed}
  For all $S:\Stone$, the proposition $\propTrunc{S}$ is closed. 
\end{corollary}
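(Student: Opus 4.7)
The plan is to express $\propTrunc{S}$ as the negation of an open proposition, and then invoke \Cref{rmkOpenClosedNegation} to conclude that it is closed.

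First I would write $S = Sp(B)$ for some $B:\Boole$ (using \Cref{SpIsAntiEquivalence}). By \Cref{LemSurjectionsFormalToCompleteness} we have $\neg\neg S \to \propTrunc{S}$, and the reverse implication is trivial, so $\propTrunc{S} \leftrightarrow \neg\neg Sp(B)$. Then \Cref{SpectrumEmptyIff01Equal} gives $\neg Sp(B) \leftrightarrow (0 =_B 1)$. Combining these two equivalences, $\propTrunc{S} \leftrightarrow \neg(0 =_B 1)$.

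It remains to show that $0 =_B 1$ is open. By \Cref{BooleIsODisc}, $B$ is merely a sequential colimit of finite Boolean algebras $B_n$. Since identity types commute with sequential colimits (the result cited in \Cref{OdiscSigma}), we have
\[
  (0 =_B 1) \leftrightarrow \exists_{n:\N}\, (0 =_{B_n} 1).
\]
Each $B_n$ is finite and thus has decidable equality, so $(0 =_{B_n} 1)$ is a decidable proposition for every $n$. Therefore $0 =_B 1$ is a countable disjunction of decidable propositions, i.e.\ open. Its negation $\neg(0 =_B 1)$ is then closed by \Cref{rmkOpenClosedNegation}, which gives the result.

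The main potential obstacle is step three: verifying that identity types of the sequential colimit behave as expected, so that $0 =_B 1$ really is the countable join of the $0 =_{B_n} 1$. This is a standard HoTT fact already used implicitly in the paper via \Cref{OdiscSigma}, so I expect no real difficulty, only a careful invocation of the correct commutation result from \cite{SequentialColimitHoTT}.
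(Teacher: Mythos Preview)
Your proof is correct and follows essentially the same route as the paper: show $\propTrunc{S}\leftrightarrow\neg\neg S\leftrightarrow\neg(0=_B1)$ via \Cref{LemSurjectionsFormalToCompleteness} and \Cref{SpectrumEmptyIff01Equal}, then argue that $0=_B1$ is open. The only difference is cosmetic: where you unpack the openness of $0=_B1$ by hand using the commutation of identity types with sequential colimits, the paper simply cites \Cref{BooleIsODisc} and \Cref{OdiscQuotientCountableByOpen} (equivalently \Cref{OdiscSigma} plus \Cref{PropOpenIffOdisc}) to say that equality in any overtly discrete type is open.
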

\begin{proof}
  By \Cref{SpectrumEmptyIff01Equal}, $\neg S$ is equivalent to $0=_{2^S} 1$, which is open by \Cref{BooleIsODisc} and \Cref{OdiscQuotientCountableByOpen}. 
  Hence $\neg \neg S$ is a closed proposition, and by \Cref{LemSurjectionsFormalToCompleteness}, so is $\propTrunc{S}$. 
\end{proof}

\begin{corollary}\label{PropositionsClosedIffStone}
  A proposition $P$ is closed if and only if it is a Stone space. 
\end{corollary}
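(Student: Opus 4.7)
The plan is to prove both directions separately, and both are short given the machinery already established.

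For the forward direction, suppose $P$ is closed. Unfolding the definition, we get $\alpha:2^\N$ with $P \leftrightarrow \forall_{n:\N}\alpha_n = 0$. By \Cref{ClosedPropAsSpectrum}, this biconditional exhibits $P$ as logically equivalent to $Sp(2/(\alpha_n)_{n:\N})$. Since both $P$ and $Sp(2/(\alpha_n)_{n:\N})$ are propositions, logical equivalence upgrades to equivalence of types, and $2/(\alpha_n)_{n:\N}$ is a countably presented Boolean algebra, so $P$ is a Stone space.

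For the backward direction, suppose $P$ is a proposition that is also a Stone space. Since $P$ is a proposition, the canonical map $P \to \propTrunc{P}$ is an equivalence, so $P \leftrightarrow \propTrunc{P}$. Now $P$, being Stone, is merely of the form $Sp(B)$, and \Cref{TruncationStoneClosed} ensures that $\propTrunc{S}$ is closed for every Stone space $S$. Since being closed is a proposition (it is a mere existential over $2^\N$), we may unfold the mere equivalence $P \simeq Sp(B)$ and conclude that $\propTrunc{P}$ is closed, hence $P$ is closed.

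There is no real obstacle here; the work has already been done in \Cref{ClosedPropAsSpectrum} and \Cref{TruncationStoneClosed}. The only point requiring a moment of care is observing that the "merely" in the definition of Stone space poses no problem because closedness is propositional, so the proof of the backward direction really is just the reduction $P = \propTrunc{P}$ composed with \Cref{TruncationStoneClosed}.
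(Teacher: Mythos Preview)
Your proof is correct and follows essentially the same route as the paper: the forward direction uses \Cref{ClosedPropAsSpectrum} to exhibit a closed proposition as a spectrum, and the backward direction uses $P \leftrightarrow \propTrunc{P}$ together with \Cref{TruncationStoneClosed}. You are slightly more explicit about why the ``merely'' in the definition of Stone space is harmless, which is a welcome clarification, but the structure of the argument is identical.
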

\begin{proof}
  By the above, if $S$ is both a Stone space and a proposition, it is closed. 
  By \Cref{ClosedPropAsSpectrum}, any closed proposition is Stone. 
\end{proof}

\begin{lemma}\label{StoneEqualityClosed}
For all $S:\Stone$ and $s,t:S$, the proposition $s=t$ is closed. 
\end{lemma}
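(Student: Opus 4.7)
The plan is to reduce $s=t$ to a countable conjunction of decidable propositions and then invoke the closure properties of $\Closed$ established earlier. Since ``$s=t$ is closed'' is itself a proposition, we may freely pass to a chosen concrete presentation of $S$ obtained by mere equality.

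By \Cref{StoneAreProfinite} together with \Cref{SpIsAntiEquivalence}, we may assume $S$ is presented as a sequential limit $\lim_{n:\N} S_n$ of finite sets $S_n$. This limit is constructed as the $\Sigma$-type of compatible sequences $(s_n)_{n:\N}$ with $s_n:S_n$ and coherence equalities between consecutive components. Equality of such pairs is, by function extensionality and the propositional character of the coherence conditions, pointwise on the underlying sequences. Hence for $s,t:S$ corresponding to $(s_n)$ and $(t_n)$, we have
\[
  (s=t) \;\leftrightarrow\; \forall_{n:\N}\, (s_n = t_n).
\]

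Each individual proposition $s_n = t_n$ is equality in the finite set $S_n$, and is therefore decidable, hence closed (every decidable proposition is closed, as noted in \Cref{rmkOpenClosedNegation}). By the closure of $\Closed$ under countable conjunctions proved in the section on open and closed propositions, $\forall_{n:\N}\,(s_n = t_n)$ is closed, and so $s=t$ is closed as desired.

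I do not anticipate any serious obstacle. The only mildly delicate point is the passage from the mere profinite presentation to a concrete one, which is justified because the goal is a proposition; everything else is a direct application of the limit description of Stone spaces and the closure properties of closed propositions.
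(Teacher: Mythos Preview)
Your argument is correct and runs along the same line as the paper's: exhibit $s=t$ as a countable conjunction of decidable equalities and conclude closedness. The paper stays with the description $S=Sp(B)$ and observes that two Boolean morphisms $B\to 2$ agree iff they agree on a countable generating set $G$, giving $(s=t)\leftrightarrow\forall_{g:G}\,s(g)=t(g)$; you instead pass through the profinite presentation $S=\lim_n S_n$ and compare componentwise. These are two views of the same reduction (the finite stages are exactly the spectra of the finitely generated subalgebras), and neither buys anything the other does not. One minor citation fix: the direction you invoke---that every Stone space is merely a sequential limit of finite sets---is the unlabeled lemma immediately preceding \Cref{StoneAreProfinite}; \Cref{StoneAreProfinite} itself is the converse.
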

\begin{proof}
  Suppose $S= Sp(B)$ and let $G$ be a countable set of generators for $B$. 
  Then $s=t$ if and only if $s(g) = t(g)$ for all $g:G$. 
  So $s=t$ is a countable conjunction of decidable propositions, hence 
  closed.
\end{proof}

\subsection{The topology on Stone spaces}
\begin{theorem}\label{StoneClosedSubsets}
  Let $A\subseteq S$ be a subset of a Stone space. The following are equivalent:
  \begin{enumerate}[(i)]
    \item There exists a map $\alpha:S \to 2^\N$ such that 
      $A (x) \leftrightarrow \forall_{n:\N} \alpha_{x,n} = 0$ for any $x:S$. 
    \item There exists a family 
      $(D_n)_{n:\N}$ 
      of decidable subsets of $S$ such that $A = \bigcap_{n:\N} D_n$. 
    \item There exists a Stone space $T$ and some embedding $T\to S$ which image is $A$
    \item There exists a Stone space $T$ and some map $T\to S$ which image is $A$. 
    \item $A$ is closed.
  \end{enumerate}
\end{theorem}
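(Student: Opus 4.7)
My plan is to establish the cycle (i) $\Leftrightarrow$ (ii) $\Rightarrow$ (v) $\Rightarrow$ (iv) $\Rightarrow$ (i), together with a branch (i) $\Rightarrow$ (iii) $\Rightarrow$ (iv) to cover the embedding condition. The equivalence (i) $\Leftrightarrow$ (ii) is immediate, since a map $\alpha:S\to 2^\N$ is literally a sequence of decidable subsets $D_n(x):=(\alpha_{x,n}=0)$ with $A=\bigcap_nD_n$. The implication (ii) $\Rightarrow$ (v) follows from the previous subsection, as each $D_n$ is decidable (hence closed) and closed propositions are closed under countable conjunction. And (iii) $\Rightarrow$ (iv) is trivial.

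For (i) $\Rightarrow$ (iii), write $S=Sp(B)$ with $B:\Boole$. By \Cref{AxStoneDuality}, the map $\alpha:S\to 2^\N=Sp(2[\N])$ is dual to a sequence $a_n:B$ satisfying $\alpha_{x,n}=x(a_n)$, so the quotient $B\twoheadrightarrow B/(a_n)_{n:\N}$ dualises via \Cref{SurjectionsAreFormalSurjections} to an embedding $Sp(B/(a_n))\hookrightarrow S$ whose image is $\{x:S\mid \forall n,\, x(a_n)=0\}=A$. For (iv) $\Rightarrow$ (i), given $f:T\to S$ with $T$ Stone, Stone duality presents $f$ as $Sp(g)$ for some $g:2^S\to 2^T$ in $\Boole$, and \Cref{BooleEpiMono} identifies the image of $f$ with $Sp(2^S/Ker(g))\subseteq S$. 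The kernel $Ker(g)$ is overtly discrete, hence enumerable by some $(k_n)_{n:\N}$, so $Im(f)=\{x:S\mid \forall n,\, x(k_n)=0\}$ and $\alpha_{x,n}:=x(k_n)$ witnesses (i).

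The main obstacle is (v) $\Rightarrow$ (iv): for each $x:S$ the representing sequence $\alpha_x:2^\N$ coming from closedness of $A(x)$ exists only merely, and there is no evident way to assemble a global $\alpha:S\to 2^\N$. This is exactly what \Cref{AxLocalChoice} is for: applied to the type family $P(x):=\Sigma_{\alpha:2^\N}\bigl(A(x)\leftrightarrow \forall n,\,\alpha_n=0\bigr)$ over $S=Sp(B)$, it yields some $C:\Boole$, a surjection $q:Sp(C)\twoheadrightarrow S$, and a lift $\widetilde{\alpha}:Sp(C)\to 2^\N$ with $A(q(t))\leftrightarrow \forall n,\, \widetilde{\alpha}_{t,n}=0$. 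Converting $\widetilde{\alpha}$ through Stone duality produces a sequence $c_n:C$ with $\widetilde{\alpha}_{t,n}=t(c_n)$, and the composite $Sp(C/(c_n)_{n:\N})\hookrightarrow Sp(C)\to S$ (the second map being $q$) is a map from a Stone space whose image is $\{q(t)\mid t:Sp(C),\, A(q(t))\}$; by surjectivity of $q$ this subset equals $A$, establishing (iv) and closing the cycle.
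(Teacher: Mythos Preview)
Your proof is correct and follows essentially the same approach as the paper: the equivalence (i)$\Leftrightarrow$(ii) is unwound identically, the passage from a decidable-intersection description to a Stone embedding goes via quotienting $B$ by the corresponding elements, the converse via \Cref{BooleEpiMono} and enumerating the kernel, and the crucial step (v)$\Rightarrow$(iv) via local choice lifting the closed-proposition witness along a Stone cover. The only cosmetic differences are the orientation of the cycle (the paper runs (ii)$\to$(iii)$\to$(iv)$\to$(ii) with a side branch (i)$\to$(v)$\to$(iv), while you run (ii)$\to$(v)$\to$(iv)$\to$(i) with a side branch through (iii)) and that the paper phrases the local-choice step using the surjection $2^\N\twoheadrightarrow\Closed$ and \Cref{LocalChoiceSurjectionForm} rather than the type-family form of \Cref{AxLocalChoice}; these are the same argument. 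One nitpick: in your (i)$\Rightarrow$(iii) you cite \Cref{SurjectionsAreFormalSurjections} for the fact that the quotient $B\twoheadrightarrow B/(a_n)$ dualises to an embedding, but that axiom is the \emph{other} direction (injections dualise to surjections); the embedding property here is elementary, or follows from \Cref{BooleEpiMono}.
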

\begin{proof}
\item 
  \begin{itemize}
  \item 
    $(i)\leftrightarrow (ii)$. $D_n$ and $\alpha$ can be defined from each other by 
     $D_n(x) \leftrightarrow (\alpha_{x,n} = 0)$. Then observe that
     \[
     x\in \bigcap_{n:\N} D_n \leftrightarrow 
      \forall_{n:\mathbb N} (\alpha_{x,n} = 0) 
     \]
     
   \item $(ii) \to (iii)$. Let $S=Sp(B)$. 
      By \Cref{AxStoneDuality}, we have $(d_n)_{n:\N}$ in $B$ such that $D_n = \{x:S\ |\ x(d_n) = 0\}$. 
      Let $C = B/(d_n)_{n:\N}$.
      Then $Sp(C) \to S$ is as desired because:
      \[Sp(C) = \{x:S\ |\ \forall_{n:\N} x(d_n) =0\}  = \bigcap_{n:\N} D_n.\]
   \item $(iii) \to (iv)$. Immediate.
   \item $(iv) \to (ii)$. Assume $f:T\to S$ corresponds to $g:B\to C$ in $\Boole$. 
     By \Cref{BooleEpiMono}, $f(T) = Sp(B/Ker(g))$, and 
     there is a surjection $d:\N\to Ker(g)$. Denote by $D_n$ the decidable subset of $S$ corresponding to $d_n$. Then we have that $Sp(B/Ker(g)) = \bigcap_{n:\N} D_n$. 
   \item $(i) \to (v)$. By definition.
   \item $(v) \to (iv)$.
     We have a surjection $2^\N\to\Closed$ defined by $\alpha \mapsto \forall_{n:\mathbb N} \alpha_n = 0.$
     \Cref{LocalChoiceSurjectionForm} 
     gives us that there merely exists $T, e, \beta_\cdot$ as follows:
     \[
       \begin{tikzcd}
         T \arrow[r,"\beta"] \arrow[d, two heads,swap,"e"] & 2^\mathbb N 
         \arrow[d,two heads] \\
         S \arrow[r,swap,"A"] & \Closed
       \end{tikzcd} 
     \] 
     Define $B(x) \leftrightarrow \forall_{n:\mathbb N} \beta_{x,n} = 0$. 
     As $(i) \to (iii)$ by the above, $B$ is the image of some Stone space. 
     Note that $A$ is the image of $B$, 
     thus $A$ is the image of some Stone space. 
     \end{itemize} 
     \end{proof} 

\begin{corollary}\label{ClosedInStoneIsStone}
Closed subtypes of Stone spaces are Stone.
\end{corollary}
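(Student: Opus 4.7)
The plan is to read off the result directly from the equivalences already established in \Cref{StoneClosedSubsets}. Given a closed subtype $A\subseteq S$ with $S$ Stone, condition $(v)$ of that theorem holds, so by the implication $(v)\to(iii)$ we obtain (merely) a Stone space $T$ together with an embedding $i:T\hookrightarrow S$ whose image is $A$.

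Next, I would observe that an embedding is determined up to equivalence by its image: since $i$ is an embedding with image $A$, the map $T \to \sum_{x:S} A(x)$ sending $t$ to $(i(t), \mathrm{pr})$ (where $\mathrm{pr}$ witnesses that $i(t)$ lies in the image) is an equivalence. Viewing the subtype $A\subseteq S$ as the total space $\sum_{x:S} A(x)$, this gives $A\simeq T$. Since Stone spaces are closed under equivalence (being defined by mere equivalence to some $Sp(B)$), we conclude that $A$ is Stone.

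There is no real obstacle here beyond unwinding the convention, stated in the preliminaries, that a subtype $A\subseteq S$ is identified with its total space $\sum_{x:S} A(x)$, and recalling that the property of being Stone is propositional and transported along equivalences. The mere existence of $T$ in $(iii)$ is enough because ``merely equivalent to $Sp(B)$'' is precisely the definition of Stone. So the whole argument is one line: apply \Cref{StoneClosedSubsets} $(v)\to(iii)$, then use that the image of an embedding is equivalent to its domain.
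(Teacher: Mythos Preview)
Your argument is correct and matches the paper's intent: the corollary is stated without proof immediately after \Cref{StoneClosedSubsets}, so it is meant to be read off exactly as you do, via $(v)\to(iii)$ together with the standard fact that an embedding is equivalent to its image. One could also observe that in the proof of $(ii)\to(iii)$ the Stone space produced is literally $Sp(B/(d_n)_{n:\N})=\bigcap_{n:\N}D_n=A$, so no separate ``image of an embedding'' step is even needed; but your route is equally valid and equally short.
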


\begin{corollary}\label{InhabitedClosedSubSpaceClosed}
  For $S:\Stone$ and $A\subseteq S$ closed, we have 
  $\exists_{x:S} A(x)$ is closed. 
\end{corollary}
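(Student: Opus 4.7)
The plan is to combine the two preceding corollaries directly. The proposition $\exists_{x:S} A(x)$ is by definition $\propTrunc{\sum_{x:S} A(x)}$, viewing $A$ as an embedding $\sum_{x:S} A(x) \hookrightarrow S$ as discussed in the Preliminaries remark on subtypes.

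First I would invoke \Cref{ClosedInStoneIsStone} to conclude that $\sum_{x:S} A(x)$ is itself a Stone space, since it is a closed subtype of the Stone space $S$. Then I would apply \Cref{TruncationStoneClosed}, which states that for any $T:\Stone$ the propositional truncation $\propTrunc{T}$ is a closed proposition. Applied to $T = \sum_{x:S} A(x)$, this yields that $\propTrunc{\sum_{x:S} A(x)} = \exists_{x:S} A(x)$ is closed, as required.

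I do not expect any real obstacle here; the content of the statement is already packaged in the two corollaries, and the only thing to check is the identification of the existential with the truncation of the total space of the embedding, which is just the definition of $\exists$ in terms of $\Sigma$ and propositional truncation.
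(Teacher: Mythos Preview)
Your proof is correct and matches the paper's proof essentially verbatim: the paper also invokes \Cref{ClosedInStoneIsStone} to see that $\Sigma_{x:S}A(x)$ is Stone and then \Cref{TruncationStoneClosed} to conclude its truncation is closed.
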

\begin{proof}
  By \Cref{ClosedInStoneIsStone}, $\Sigma_{x:S}A(x)$ is Stone, 
  so its truncation is closed by \Cref{TruncationStoneClosed}.
\end{proof}

\begin{corollary}\label{ClosedDependentSums}
  Closed propositions are closed under sigma types. 
\end{corollary}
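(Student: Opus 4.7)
The plan is to reduce this to the already-established \Cref{InhabitedClosedSubSpaceClosed} (or equivalently, to combine \Cref{ClosedInStoneIsStone} with \Cref{PropositionsClosedIffStone}). Let $P$ be a closed proposition and $Q : P \to \Prop$ a family of closed propositions; we want $\Sigma_{p:P} Q(p)$ to be closed.

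First, I would observe that since $P$ is a proposition, the sigma type $\Sigma_{p:P} Q(p)$ is again a proposition, and is logically equivalent to $\exists_{p:P} Q(p)$. Next, by \Cref{PropositionsClosedIffStone} applied to $P$, the proposition $P$ is a Stone space. By hypothesis, $Q$ is then a closed subtype of the Stone space $P$ in the sense of the \emph{Types as spaces} subsection. Now \Cref{InhabitedClosedSubSpaceClosed} applies directly: the proposition $\exists_{p:P} Q(p)$ is closed, and this is exactly what we wanted.

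Alternatively, one can avoid invoking \Cref{InhabitedClosedSubSpaceClosed} explicitly: by \Cref{ClosedInStoneIsStone}, the closed subtype $\Sigma_{p:P} Q(p)$ of the Stone space $P$ is itself Stone, and since it is also a proposition, \Cref{PropositionsClosedIffStone} once more converts ``Stone'' back into ``closed''.

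There is no real obstacle here; this corollary is essentially a packaging of the Stone/closed correspondence together with closure of Stone spaces under closed subtypes. The only minor point to be careful about is that $Q$ is a family indexed by elements of $P$ rather than a subtype of a fixed ambient type, but since $P$ is a proposition this distinction is harmless: $Q$ can be viewed as a pointwise-closed subtype of $P$ without loss of information.
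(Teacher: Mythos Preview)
Your argument is correct and matches the paper's proof essentially verbatim: identify $\Sigma_{p:P}Q(p)$ with $\exists_{p:P}Q(p)$, use \Cref{PropositionsClosedIffStone} to view $P$ as Stone, and then apply \Cref{InhabitedClosedSubSpaceClosed}. Your alternative via \Cref{ClosedInStoneIsStone} is also fine and amounts to the same thing.
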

\begin{proof}
  Let $P:\Closed$ and $Q:P \to \Closed$. 
  Then $\Sigma_{p:P} Q(p) \leftrightarrow \exists_{p:P} Q(p)$.
  As $P$ is Stone by \Cref{PropositionsClosedIffStone}, 
  \Cref{InhabitedClosedSubSpaceClosed} gives that $\Sigma_{p:P} Q(p)$ is closed. 
\end{proof}
\begin{remark}\label{ClosedDominance}\label{ClosedTransitive}
  Analogously to \Cref{OpenTransitive} and \Cref{OpenDominance}, it follows that 
  closedness is transitive and $\Closed$ forms a dominance. 
\end{remark}



\begin{lemma}\label{StoneSeperated}
  Assume $S:\Stone $ with $F,G:S \to \Closed$ be such that $F\cap G = \emptyset$. 
  Then there exists a decidable subset $D:S \to 2$ such $F\subseteq D, G \subseteq \neg D$. 
\end{lemma}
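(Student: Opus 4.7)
The plan is to encode $F$ and $G$ as countable intersections of decidable subsets using \Cref{StoneClosedSubsets}, translate their disjointness into a Boolean quotient with empty spectrum, and then extract a finite separating element via \Cref{SpectrumEmptyIff01Equal}.

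Concretely, write $S = Sp(B)$ for some $B:\Boole$ and apply \Cref{StoneClosedSubsets}(i) to the closed subsets $F$ and $G$. Together with Stone duality (\Cref{AxStoneDuality}) identifying maps $S \to 2$ with elements of $B$, one obtains sequences $(a_n)_{n:\N}$ and $(b_n)_{n:\N}$ in $B$ with
\[
F(x)\leftrightarrow \forall_{n:\N}\,x(a_n)=0 \quad\text{and}\quad G(x)\leftrightarrow \forall_{n:\N}\,x(b_n)=0.
\]
Form the countably presented Boolean algebra $C := B/(a_n, b_n)_{n:\N}$; by the construction used in the step $(ii)\to(iii)$ of \Cref{StoneClosedSubsets}, its spectrum is exactly $F \cap G$, hence empty. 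By \Cref{SpectrumEmptyIff01Equal} this gives $0 =_C 1$, which unfolds to $1$ belonging to the Boolean ideal of $B$ generated by $\{a_n\}_{n:\N}\cup\{b_n\}_{n:\N}$. This merely yields $k, l:\N$ such that $a := \bigvee_{i\leq k} a_i$ and $b := \bigvee_{j\leq l} b_j$ satisfy $a \vee b = 1$ in $B$. Define $D:S\to 2$ to be the decidable subset corresponding to $\neg a \in B$ under Stone duality, i.e.\ $D(x) \leftrightarrow x(a) = 0$. For $x\in F$ we have $x(a_i) = 0$ for each $i\leq k$, hence $x(a) = 0$ and $x \in D$; for $x \in G$ we have $x(b) = 0$, and from $x(a)\vee x(b) = 1$ we deduce $x(a) = 1$, hence $x \in \neg D$. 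Since the conclusion of the lemma is a mere proposition, the mere existence of $k, l$ suffices.

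The main subtlety is the passage from $0 =_C 1$ to the finite equation $a \vee b = 1$ in $B$. This rests on the standard fact that the quotient of a Boolean algebra by a countable family of elements coincides with the quotient by the ideal they generate, together with the definition of that ideal as the set of elements dominated by some finite join of the generators. The remaining steps are routine applications of \Cref{StoneClosedSubsets}, Stone duality, and \Cref{SpectrumEmptyIff01Equal}; in particular no further use of choice is required beyond what is already used in \Cref{StoneClosedSubsets}.
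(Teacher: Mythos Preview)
Your proof is correct and follows essentially the same route as the paper: represent $F$ and $G$ by sequences in $B$, observe that $Sp(B/(\text{both sequences}))=F\cap G=\emptyset$, extract a finite identity $a\vee b=1$ via \Cref{SpectrumEmptyIff01Equal}, and read off a separating decidable subset. The only cosmetic difference is that you take $D$ to be $\{x : x(a)=0\}$ while the paper takes $D$ to be $\{x : x(b)=1\}$; both choices work for the same reason.
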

\begin{proof}
  Assume $S = Sp(B)$. 
  By \Cref{StoneClosedSubsets}, for all $n:\N$ there is $f_n,g_n:B$ such that 
  $x\in F$ if and only if $\forall_{n:\N}x(f_n) = 0$ and 
  $y\in G$ if and only if $\forall_{n:\N}y(g_n) = 0$.
%
  Denote by $h$ the sequence define by $h_{2k}=f_k$ and $h_{2k+1}=g_k$.
Then $Sp(B/(h_k)_{k:\N}) = F \cap G=\emptyset$, so by \Cref{SpectrumEmptyIff01Equal}
%
  there exists finite sets $I,J\subseteq \N $ such that 
  $1 =_B ((\bigvee_{i:I}  f_i) \vee (\bigvee_{j:J}  g_j)).$
  If $y\in F$, then $y(f_i) = 0$ for all $i:I$, hence
  $y(\bigvee_{j:J} g_j) = 1 $
 If $x\in G$, we have 
  $x(\bigvee_{j:J} g_j) = 0$. 
  Thus we can define the required $D$ by 
  $D(x) \leftrightarrow x(\bigvee_{j:J} g_j) = 1$.
\end{proof} 


\section{Compact Hausdorff spaces}
\begin{definition}
  A type $X$ is called a compact Hausdorff space if its identity types are closed propositions and there exists some $S:\Stone$ and a surjection $S\twoheadrightarrow X$.
\end{definition}


\subsection{Topology on compact Hausdorff spaces}

\begin{lemma}\label{CompactHausdorffClosed}
  Let $X:\CHaus$ with $S:\Stone$ and a surjective map $q:S\twoheadrightarrow X$.
  Then $A\subseteq X$ is closed if and only if it is the image of a closed subset of $S$ by $q$. 
\end{lemma}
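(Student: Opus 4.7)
The plan is to take $q^{-1}(A) = \{s:S \mid A(q(s))\}$ as the natural candidate closed subset of $S$ in the forward direction, since surjectivity of $q$ makes $q(q^{-1}(A)) = A$ automatic. The substantive content is then to translate closedness across $q$ in both directions, using that fibers of $q$ are closed subsets of a Stone space.

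For the implication from right to left, suppose $A = q(C)$ for some closed $C \subseteq S$. Fix $x:X$; unfolding the image, $A(x) \leftrightarrow \exists_{s:S}(q(s) = x \wedge C(s))$. Let $F$ denote the subtype $\{s:S \mid q(s) = x \wedge C(s)\}$ of $S$. I argue $F$ is closed: the condition $q(s) = x$ is closed because identity types in $X$ are closed by the compact Hausdorff hypothesis, $C(s)$ is closed by assumption, and closed propositions are closed under finite conjunction. Since $F$ is then a closed subset of the Stone space $S$, \Cref{InhabitedClosedSubSpaceClosed} yields that $\exists_{s:S} F(s)$, which is $A(x)$, is closed.

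Conversely, suppose $A \subseteq X$ is closed. Take $C = q^{-1}(A)$. For each $s:S$, $C(s) = A(q(s))$ is closed by assumption on $A$, so $C$ is a closed subset of $S$. It remains to check $q(C) = A$: the inclusion $q(C) \subseteq A$ is immediate from the definition of $C$, and for $x \in A$, surjectivity of $q$ gives merely some $s:S$ with $q(s) = x$, whence $A(q(s))$ holds and $s \in C$, so $x \in q(C)$.

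I do not expect a serious obstacle: both implications are short once the right ingredients are in hand. The one place requiring care is the right-to-left direction, where one must recognize that $F$ is closed in $S$ in order to apply \Cref{InhabitedClosedSubSpaceClosed}; this uses crucially both parts of the definition of $\CHaus$, the Stone cover $q$ supplying the ambient Stone space and the closedness of identity types on $X$ making the fibers of $q$ closed.
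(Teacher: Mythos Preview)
Your proof is correct and follows essentially the same approach as the paper: the forward direction uses $q^{-1}(A)$ and surjectivity, and the reverse direction shows each $A(x)$ is closed by recognizing it as $\exists_{s:S}$ of a closed subset of $S$ (a conjunction of the closed identity type in $X$ with $C(s)$) and invoking \Cref{InhabitedClosedSubSpaceClosed}. The only cosmetic difference is that the paper phrases the reverse direction by defining $A'(s)=\exists_{t:S}(C(t)\wedge q(s)=q(t))$ on $S$ and then observing it factors through $q$, whereas you work directly at a point $x:X$; the content is the same.
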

\begin{proof}
  As $q$ is surjective, we have $q(q^{-1}(A)) = A$.
  If $A$ is closed, so is $q^{-1}(A)$ and 
  hence $A$ is the image of a closed subtype of $S$. 
  Conversely, let $B\subseteq S$ be closed. 
  Define $A'\subseteq S$ by 
  \[A'(s) = \exists_{t:S} (B(t) \wedge q(s) = q(t)).\]
  Note that $B(t)$ and $q(s) = q(t)$ are closed. 
  Hence by \Cref{InhabitedClosedSubSpaceClosed}, $A'$ is closed. 
  Also $A'$ factors through $q$ as a map $A: X\to \Closed$.
  Furthermore, $A'(s) \leftrightarrow (q(s)\in q(B))$. 
  Hence $A=q(B)$. 
\end{proof}

\begin{remark}\label{InhabitedClosedSubSpaceClosedCHaus}
  Let $X:\Chaus$.
  From \Cref{StoneClosedSubsets}, it follows that $A\subseteq X$ is closed if and only if it is the image of a map 
  $T\to X$ for some $T:\Stone$. 
  If $A$ is closed, it follows from \Cref{InhabitedClosedSubSpaceClosed} that $\exists_{x:X} A(x)$ is closed as well, 
  hence $\neg\neg$-stable, and equivalent to $A \neq \emptyset$. 
\end{remark}

\begin{corollary}\label{AllOpenSubspaceOpen}
  For $U\subseteq X$ an open subset of a compact Hausdorff space, 
  $\forall_{x:X} U(x)$ is open. 
\end{corollary}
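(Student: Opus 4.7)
The plan is to reduce the universally quantified statement to the negation of a closed existential, and then invoke Markov's principle via the machinery already developed.

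First, I would note that since $U$ is open, each $U(x)$ is $\neg\neg$-stable by \Cref{rmkOpenClosedNegation}. Using the standard intuitionistic equivalence $\neg \exists_{x:X} \neg U(x) \leftrightarrow \forall_{x:X} \neg\neg U(x)$ together with this stability, we obtain
\[
  \forall_{x:X} U(x) \ \leftrightarrow\ \neg \exists_{x:X} \neg U(x).
\]
So it suffices to show that $\neg \exists_{x:X} \neg U(x)$ is open.

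Next, since $U(x)$ is open, $\neg U(x)$ is closed (again by \Cref{rmkOpenClosedNegation}). Thus $\{x : X \mid \neg U(x)\}$ is a closed subtype of the compact Hausdorff space $X$. By \Cref{InhabitedClosedSubSpaceClosedCHaus}, the proposition $\exists_{x:X} \neg U(x)$ is closed.

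Finally, the negation of a closed proposition is open by Markov's principle (\Cref{MarkovPrinciple}, as recorded in \Cref{rmkOpenClosedNegation}). Therefore $\neg \exists_{x:X} \neg U(x)$ is open, and by the equivalence above, so is $\forall_{x:X} U(x)$. No step here looks like a serious obstacle: each link in the chain is a direct appeal to a result already established in the excerpt; the only thing worth double-checking is that \Cref{InhabitedClosedSubSpaceClosedCHaus} genuinely applies to the closed subtype $\neg U$, which it does since pointwise closedness is exactly the definition of a closed subtype.
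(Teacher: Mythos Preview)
Your proof is correct and is exactly the argument the paper intends: the corollary is stated without proof immediately after \Cref{InhabitedClosedSubSpaceClosedCHaus}, and dualizing that remark via $\forall_{x:X} U(x) \leftrightarrow \neg\exists_{x:X}\neg U(x)$ (using $\neg\neg$-stability of open propositions and that the negation of a closed proposition is open) is precisely how it follows.
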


\begin{lemma}\label{CHausFiniteIntersectionProperty}
  Given $X:\Chaus$ and $C_n:X\to \Closed$ closed subsets such that $\bigcap_{n:\N} C_n =\emptyset$, there is some $k:\N$ 
  with $\bigcap_{n\leq k} C_n  = \emptyset$. 
\end{lemma}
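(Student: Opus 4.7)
I would reduce the statement to the case where $X$ is itself Stone by pulling the family $(C_n)$ back along a Stone cover, and then extract a finite sub-family using \Cref{SpectrumEmptyIff01Equal}.

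First, since $X$ is compact Hausdorff I would pick a surjection $q:S \twoheadrightarrow X$ with $S:\Stone$, and set $B_n := q^{-1}(C_n) \subseteq S$. Each $B_n$ is closed because pre-images of closed subtypes under any map are closed. Surjectivity of $q$ gives both $\bigcap_{n:\N} B_n = \emptyset$ (a point of the intersection would map to $\bigcap_n C_n = \emptyset$) and the converse at each finite stage: $\bigcap_{n\leq k} B_n = \emptyset$ implies $\bigcap_{n\leq k} C_n = \emptyset$, since any candidate $y$ in the latter lifts to some $x:S$ satisfying $x \in B_n$ for every $n \leq k$. This reduces the problem to the case where the ambient space is Stone.

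Next, writing $S = Sp(B)$, I would apply \Cref{StoneClosedSubsets} (using the implication from (v) to (ii)) together with \Cref{AxStoneDuality} and countable choice to present each $B_n$ uniformly as $\{x:S \mid \forall_{m:\N}\, x(b_{n,m}) = 0\}$ for some sequence $(b_{n,m})_{m:\N}$ in $B$. Then $\bigcap_{n:\N} B_n = Sp\bigl(B/(b_{n,m})_{n,m:\N}\bigr)$, which is empty by hypothesis. By \Cref{SpectrumEmptyIff01Equal} the quotient is trivial, so in $B$ the element $1$ is a finite join $\bigvee_{i\leq \ell} b_{n_i, m_i}$ of the generators of the quotient (exactly as in the proof of \Cref{LLPO}). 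Taking $k$ to be the maximum of the indices $n_i, m_i$, the same equation yields $Sp\bigl(B/(b_{n,m})_{n\leq k,\, m\leq k}\bigr) = \emptyset$, and since $B_n \subseteq \{x : x(b_{n,m}) = 0\}$ for every $m$, this forces $\bigcap_{n\leq k} B_n = \emptyset$.

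The main obstacle I expect is the uniform extraction of the presentations $(b_{n,m})_{m:\N}$ as $n$ varies: this requires applying \Cref{StoneClosedSubsets} countably many times and invoking countable choice to glue the witnessing sequences. The descent step from $X$ to its Stone cover deserves a small check but is immediate from surjectivity of $q$; every remaining step is a direct manipulation via Stone duality.
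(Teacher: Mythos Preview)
Your proof is correct and follows essentially the same route as the paper: reduce to a Stone space by pulling back along a Stone cover, then use \Cref{SpectrumEmptyIff01Equal} to extract a finite subfamily. The only cosmetic difference is that the paper absorbs your double-indexed family $(b_{n,m})$ into a single sequence by saying ``by \Cref{StoneClosedSubsets} we can assume $C_n$ decidable,'' whereas you keep both indices visible and bound them simultaneously; your version makes the use of countable choice and the final inclusion $\bigcap_{n\leq k} B_n \subseteq \bigcap_{n\leq k,\,m\leq k}\{x : x(b_{n,m})=0\}$ explicit, which the paper leaves to the reader.
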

\begin{proof}
  By \Cref{CompactHausdorffClosed} it is enough to prove the result when $X$ is Stone, and by \Cref{StoneClosedSubsets} we can assume $C_n$ decidable.
  So assume 
  $X=Sp(B)$ and $c_n:B$ such that: 
  \[C_n = \{x:B\to 2\ |\ x(c_n) = 0\}.\]
  Then the set of maps $B\to 2$ sending all $c_n$ to $0$ is given by: 
  \[Sp(B/(c_n)_{n:\N})
  \simeq \bigcap_{n:\N} C_n = \emptyset .\]
  Hence 
  $0=1$ in $B/(c_n)_{n:\N}$ 
  and there is some $k:\N$ with 
  $\bigvee_{n\leq k} c_n = 1$, which also means that: 
  \[\emptyset = Sp(B/(c_n)_{n\leq k}) 
  \simeq \bigcap_{n\leq k} C_n \]
  as required.
\end{proof}

\begin{corollary}\label{ChausMapsPreserveIntersectionOfClosed}
  Let $X,Y:\CHaus$ and $f:X \to Y$. 
  Suppose $(G_n)_{n:\N}$ is a decreasing sequence of closed subsets of $X$. 
  Then $f(\bigcap_{n:\N} G_n) = \bigcap_{n:\N}f(G_n)$. 
\end{corollary}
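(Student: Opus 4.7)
The plan is to establish both inclusions of $f(\bigcap_{n:\N} G_n) = \bigcap_{n:\N} f(G_n)$ separately. The forward inclusion is routine: if $x:X$ lies in every $G_n$, then $f(x)$ lies in every $f(G_n)$.

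For the reverse inclusion, I would fix $y:Y$ in $\bigcap_{n:\N} f(G_n)$ and, with the fiber $F := \{x:X \mid f(x) = y\}$, set $H_n := G_n \cap F$. Because $Y:\CHaus$ has closed identity types, $F$ is closed, and since closed propositions are closed under conjunction (\Cref{ClosedDependentSums}), each $H_n$ is a closed subset of $X$. The sequence $(H_n)_{n:\N}$ is decreasing (since $(G_n)$ is), and by definition $y \in f(\bigcap_{n:\N} G_n)$ is the same as the inhabitation of $\bigcap_{n:\N} H_n$.

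To produce an element of $\bigcap_{n:\N} H_n$, I would invoke \Cref{InhabitedClosedSubSpaceClosedCHaus}: the existential $\exists_{x:X}\, x \in \bigcap_{n:\N} H_n$ is a closed, hence $\neg\neg$-stable, proposition, so it suffices to derive a contradiction from $\bigcap_{n:\N} H_n = \emptyset$. Under that assumption, \Cref{CHausFiniteIntersectionProperty} applied to the closed sequence $(H_n)$ in $X:\CHaus$ supplies some $k:\N$ with $\bigcap_{n \leq k} H_n = \emptyset$; since $(H_n)$ is decreasing this finite intersection collapses to $H_k$, contradicting $y \in f(G_k)$, which by definition provides an explicit element of $H_k$.

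The main subtlety is the interplay between closedness and $\neg\neg$-stability: one has to recognise that the target existential is closed (via \Cref{InhabitedClosedSubSpaceClosedCHaus}) before it is legitimate to argue by contradiction, and to notice that the decreasing hypothesis on $(G_n)$ is exactly what lets the finite-intersection lemma land on a single $H_k$ rather than a more complicated conjunction. The construction of the auxiliary sequence $H_n$ is the only substantive idea; everything else is bookkeeping with the dominance properties of $\Closed$.
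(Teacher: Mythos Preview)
Your proof is correct and is essentially the paper's argument: define the closed fiber $F=f^{-1}(y)$, intersect with the $G_n$, and combine \Cref{CHausFiniteIntersectionProperty} with the $\neg\neg$-stability afforded by \Cref{InhabitedClosedSubSpaceClosedCHaus}. The only imprecision is the phrase ``provides an explicit element of $H_k$'': $y\in f(G_k)$ only gives mere inhabitation of $H_k$, but that already contradicts $H_k=\emptyset$, so the argument goes through unchanged.
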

\begin{proof}
  It is always the case that $f(\bigcap_{n:\N} G_n) \subseteq \bigcap_{n:\N} f(G_n)$. 
  For the converse direction, suppose that $y \in f(G_n)$ for all $n:\N$. 
  We define $F\subseteq X$ closed by $F=f^{-1}(y)$. 
  Then for all $n:\N$ we have that $F\cap G_n$ is 
  non-empty. 
  By \Cref{CHausFiniteIntersectionProperty} this implies that $\bigcap_{n:\N}(F\cap G_n) \neq \emptyset$. 
  By \Cref{InhabitedClosedSubSpaceClosedCHaus},  
  $\bigcap_{n:\N} (F\cap G_n)$ is merely inhabited. Thus $y\in f(\bigcap_{n:\N} G_n)$ as required. 
\end{proof}

\begin{corollary}\label{CompactHausdorffTopology}
Let $A\subseteq X$ be a subset of a compact Hausdorff space and $p:S\twoheadrightarrow X$ be a surjective map with $S:\Stone$. Then $A$ is closed (resp. open) if and only if there exists a sequence $(D_n)_{n:\N}$ of decidable subsets of $S$ such that $A = \bigcap_{n:\N} p(D_n)$ (resp. $A = \bigcup_{n:\N} \neg p(D_n)$).
\end{corollary}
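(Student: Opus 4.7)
The plan is to treat the closed case first and then reduce the open case to it by taking complements, making essential use of \Cref{ChausMapsPreserveIntersectionOfClosed} and $\neg\neg$-stability of open and closed propositions.

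For the closed case, one direction is immediate: if $A = \bigcap_{n:\N} p(D_n)$ with $D_n\subseteq S$ decidable, then each $D_n$ is in particular closed in $S$, so each $p(D_n)$ is closed in $X$ by \Cref{CompactHausdorffClosed}, and a countable intersection of closed subsets is closed. For the converse, assume $A\subseteq X$ is closed. By \Cref{CompactHausdorffClosed}, there is a closed $B\subseteq S$ with $A=p(B)$, and by \Cref{StoneClosedSubsets} there is a sequence $(D_n)_{n:\N}$ of decidable subsets of $S$ with $B=\bigcap_{n:\N}D_n$. Replacing $D_n$ by $\bigcap_{i\leq n}D_i$ (still decidable, since finite intersections of decidable subsets are decidable), we may assume the sequence is decreasing. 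Now I want $p(\bigcap_n D_n) = \bigcap_n p(D_n)$, which is exactly \Cref{ChausMapsPreserveIntersectionOfClosed} applied to $p:S\to X$; here I use that $S$ is compact Hausdorff, since by \Cref{StoneEqualityClosed} Stone spaces have closed identity types and the identity map is a surjection from a Stone space. This gives $A = \bigcap_{n:\N} p(D_n)$ as required.

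For the open case, I reduce to the closed case via complements. If $A = \bigcup_{n:\N} \neg p(D_n)$ with $D_n$ decidable, then each $p(D_n)$ is closed (as above), so each $\neg p(D_n)$ is open by \Cref{rmkOpenClosedNegation}, and a countable union of open propositions is open. Conversely, suppose $A\subseteq X$ is open. Then $\neg A$ is closed by \Cref{rmkOpenClosedNegation}, so by the closed case there exist decidable $D_n\subseteq S$ with $\neg A = \bigcap_{n:\N} p(D_n)$. Since $A$ is open, it is $\neg\neg$-stable, so $A = \neg\neg A = \neg\bigcap_{n:\N} p(D_n)$. Each $p(D_n)$ is closed, so by \Cref{ClosedMarkov} applied pointwise,
\[
\neg \bigcap_{n:\N} p(D_n)(x) \;\leftrightarrow\; \exists_{n:\N}\, \neg p(D_n)(x),
\]
which gives $A = \bigcup_{n:\N} \neg p(D_n)$, as required.

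The main obstacle is verifying that \Cref{ChausMapsPreserveIntersectionOfClosed} is applicable, which requires both that Stone spaces are compact Hausdorff and that the sequence $(D_n)_{n:\N}$ can be taken to be decreasing; both are straightforward once noticed. The open case hinges on the De Morgan passage from $\neg\forall_n P_n$ to $\exists_n\neg P_n$ for closed $P_n$, which is precisely \Cref{ClosedMarkov}, so no further work is needed there.
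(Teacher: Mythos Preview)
Your proof is correct and follows essentially the same approach as the paper, which also reduces the closed case to \Cref{StoneClosedSubsets}(ii), \Cref{CompactHausdorffClosed}, and \Cref{ChausMapsPreserveIntersectionOfClosed}, and handles the open case via \Cref{rmkOpenClosedNegation} and \Cref{ClosedMarkov}. You have made explicit two points the paper leaves implicit: that the sequence must be made decreasing before invoking \Cref{ChausMapsPreserveIntersectionOfClosed}, and that $S$ is itself compact Hausdorff so that this lemma applies to $p:S\to X$.
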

\begin{proof}
  The characterization of closed sets follows from characterization (ii) in \Cref{StoneClosedSubsets}, 
  \Cref{CompactHausdorffClosed} 
  and \Cref{ChausMapsPreserveIntersectionOfClosed}. 
  For open sets we use \Cref{rmkOpenClosedNegation} and
  \Cref{ClosedMarkov}.
\end{proof}
\begin{remark}
  For $S:\Stone$, there is a surjection $\N\twoheadrightarrow 2^S$. 
  It follows that for any $X:\CHaus$ there is a surjection from $\N$ to a basis of $X$. 
  Classically this means that $X$ is second countable. 
\end{remark}
%
\begin{lemma}\label{CHausSeperationOfClosedByOpens}
 Assume $X:\CHaus$ and $A,B\subseteq X$ closed such that $A\cap B=\emptyset$. 
  Then there exist $U,V\subseteq X$ open such that $A\subseteq U$, $B\subseteq V$ and $U\cap V=\emptyset$. 
\end{lemma}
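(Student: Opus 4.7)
The plan is to lift the separation problem from $X$ to a Stone cover of $X$, apply the already established Stone separation lemma (\Cref{StoneSeperated}), and then push the resulting decidable piece back down to $X$ via image-taking, using that images of Stone spaces in $X:\CHaus$ are closed.

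First I would choose, by definition of $\CHaus$, a Stone surjection $q:S \twoheadrightarrow X$. Since $A$ and $B$ are closed in $X$, their preimages $q^{-1}(A)$ and $q^{-1}(B)$ are closed in $S$ (the predicates $A(q(s))$ and $B(q(s))$ are closed pointwise). They remain disjoint because $A\cap B=\emptyset$. Apply \Cref{StoneSeperated} to obtain a decidable subset $D:S\to 2$ with $q^{-1}(A)\subseteq D$ and $q^{-1}(B)\subseteq \neg D$.

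Next I would define
\[
  U(x) \;:=\; \neg\bigl(x\in q(\neg D)\bigr), \qquad V(x) \;:=\; \neg\bigl(x\in q(D)\bigr).
\]
Since $D$ and $\neg D$ are decidable subsets of $S$, they are themselves Stone spaces (by \Cref{ClosedInStoneIsStone}), so the images $q(D), q(\neg D)\subseteq X$ are closed by \Cref{InhabitedClosedSubSpaceClosedCHaus}. Their negations are then open by \Cref{rmkOpenClosedNegation} (this is where Markov's principle enters), so $U$ and $V$ are open.

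It remains to check the three required conditions. For $A\subseteq U$: if $a\in A$ and we had $a=q(s)$ with $s\in \neg D$, then $s\in q^{-1}(A)\subseteq D$, a contradiction; so $\neg(a\in q(\neg D))$, i.e. $a\in U$. The inclusion $B\subseteq V$ is symmetric. For $U\cap V=\emptyset$: fix $x$ and assume $U(x)\wedge V(x)$; since this is a proposition and $q$ is surjective, we may use a witness $s:S$ with $q(s)=x$; by decidability either $s\in D$, giving $x\in q(D)$ and contradicting $V(x)$, or $s\in \neg D$, giving $x\in q(\neg D)$ and contradicting $U(x)$. The main subtlety I expect is exactly this last step: one must be careful to open the truncation $\|q^{-1}(x)\|$ only inside the proposition $\bot$, but this is fine since we are proving a negation. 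The other delicate point — that $q(D)$ and $q(\neg D)$ are genuinely closed in $X$ rather than merely images of closed sets in $S$ — is handled once and for all by the image characterisation in \Cref{InhabitedClosedSubSpaceClosedCHaus}.
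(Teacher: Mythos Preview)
Your proof is correct and follows essentially the same approach as the paper: pull back along a Stone cover $q:S\twoheadrightarrow X$, apply \Cref{StoneSeperated} to obtain a decidable $D$, and take $U=\neg q(\neg D)$, $V=\neg q(D)$. The paper in fact defines $U = \neg q(\neg D)\cap\neg B$ and $V = \neg q(D)\cap\neg A$, but the extra intersections are redundant (the disjointness argument $\neg q(D)\cap\neg q(\neg D)\subseteq\neg X=\emptyset$ does not use them), so your version is a mild simplification of the same proof.
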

\begin{proof}
  Let $q:S\to X$ be a surjective map with $S:\Stone$.
  As $q^{-1}(A)$ and $q^{-1}(B)$ are closed, 
  by \Cref{StoneSeperated}, there is some $D:S \to 2$ such that
  $q^{-1}(A) \subseteq D$ and $q^{-1}(B) \subseteq \neg D$. 
  Note that $q(D)$ and $q(\neg D)$ are closed by \Cref{CompactHausdorffClosed}. 
  We define $U = \neg q(\neg D) \cap \neg B$ and $V=\neg  q(D) \cap \neg A$. As $q^{-1}(A) \cap \neg D  =\emptyset$, we have that 
  $A\subseteq \neg q(\neg D)$. As $A\cap B = \emptyset$, we have that $A\subseteq \neg B$ so $A\subseteq U$. Similarly $B\subseteq V$. 
  Then $U$ and $V$ are disjoint because $\neg q(D)\cap \neg q(\neg D) \subseteq \neg (q(D)\cup q(\neg D)) = \neg X = \emptyset$.
\end{proof}

\subsection{Compact Hausdorff spaces are stable under sigma types}

\begin{lemma}\label{StoneAsClosedSubsetOfCantor}
A type $X$ is Stone if and only if it is merely a closed subset of $2^\N$.
\end{lemma}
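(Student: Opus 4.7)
The plan is to prove the two directions separately. The reverse direction is essentially immediate from a closure property we already have, while the forward direction requires unpacking a presentation of the relevant Boolean algebra and using Stone duality to convert surjectivity of Boolean maps into embeddings of spectra.

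For the easy direction ($\Leftarrow$), suppose $X$ is merely a closed subset of $2^\N$. Since $2^\N = Sp(2[\N])$ is a Stone space, and by \Cref{ClosedInStoneIsStone} closed subtypes of Stone spaces are themselves Stone, $X$ is Stone.

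For the main direction ($\Rightarrow$), assume $X$ is Stone, so merely $X = Sp(B)$ for some $B : \Boole$. By \Cref{BooleAsCQuotient}, $B$ is merely of the form $2[\N]/(r_n)_{n:\N}$, giving a surjective morphism $q : 2[\N] \twoheadrightarrow B$ in $\Boole$. Since $q$ is surjective, the factorization $2[\N] \twoheadrightarrow 2[\N]/Ker(q) \hookrightarrow B$ provided by \Cref{BooleEpiMono} has its injective part an isomorphism, and dually we obtain an embedding $Sp(B) \hookrightarrow Sp(2[\N]) = 2^\N$. Then by the equivalence $(iii) \leftrightarrow (v)$ of \Cref{StoneClosedSubsets}, the image of this embedding is a closed subset of $2^\N$, and since the map is an embedding, $X = Sp(B)$ is equivalent to this closed subset of $2^\N$.

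The proof is short and no step is a serious obstacle: the work has already been done in establishing \Cref{BooleAsCQuotient} (every countably presented Boolean algebra has a countable presentation on generators $(g_n)_{n:\N}$), \Cref{SurjectionsAreFormalSurjections} and \Cref{BooleEpiMono} (surjections in $\Boole$ correspond to embeddings in $\Stone$), and \Cref{StoneClosedSubsets} (images of embeddings from Stone spaces are precisely the closed subsets). The only mild subtlety is remembering to use the ``merely'' quantifier correctly, since both the presentation of $B$ and the identification $X = Sp(B)$ are a priori only propositionally given; this causes no trouble because the conclusion we wish to prove is itself a mere proposition.
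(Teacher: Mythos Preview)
Your proof is correct and follows essentially the same route as the paper's: invoke \Cref{BooleAsCQuotient} to present $B$ as a quotient of $2[\N]$, use \Cref{BooleEpiMono} to turn the quotient map into an embedding $Sp(B)\hookrightarrow 2^\N$, and conclude via \Cref{StoneClosedSubsets} that the image is closed. The paper's proof omits the reverse direction entirely (treating it as immediate from \Cref{ClosedInStoneIsStone}), but your added sentence for that direction and your remarks on the ``merely'' quantifier are appropriate.
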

\begin{proof}
  By \Cref{BooleAsCQuotient}, any $B:\Boole$ can be written as $2[\N]/(r_n)_{n:\N}$. 
  By \Cref{BooleEpiMono}, the quotient map induces an embedding $Sp(B)\hookrightarrow Sp(2[\N])= 2^\N$, 
  which is closed by \Cref{StoneClosedSubsets}.
\end{proof}


\begin{lemma}\label{SigmaCompactHausdorff}
Compact Hausdorff spaces are stable under $\Sigma$-types.
\end{lemma}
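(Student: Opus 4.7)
To show $Z := \Sigma_{x:X}Y(x)$ lies in $\CHaus$ for $X:\CHaus$ and $Y:X\to\CHaus$, the plan is to verify the two defining clauses: closed identity types and the existence of a Stone surjection onto $Z$.

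\textbf{Closed identity types.} An identification $(x_1,y_1)=(x_2,y_2)$ in $Z$ is equivalent to $\Sigma_{p:x_1=x_2}\, p_*(y_1)=y_2$. Here $x_1=x_2$ is closed because $X:\CHaus$, and for each $p$ the equation $p_*(y_1)=y_2$ is an identity in $Y(x_2):\CHaus$, hence closed. By \Cref{ClosedDependentSums} the whole sigma is closed.

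\textbf{Stone cover.} Pick a Stone surjection $p:S\twoheadrightarrow X$ witnessing $X:\CHaus$. For each $s:S$, the type $Y(p(s)):\CHaus$ merely admits a Stone surjection, whose domain by \Cref{StoneAsClosedSubsetOfCantor} may be taken to be a closed subset of $2^\N$. Applying \Cref{AxLocalChoice} to $S$, we get some $S':\Stone$, a surjection $f:S'\twoheadrightarrow S$, and a genuine choice, for each $s':S'$, of a closed $C_{s'}\subseteq 2^\N$ together with a surjection $q_{s'}:C_{s'}\twoheadrightarrow Y(p(f(s')))$. Now set $D := \Sigma_{s':S'}\, C_{s'} = \{(s',\alpha):S'\times 2^\N \mid \alpha\in C_{s'}\}$. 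Each membership proposition is closed, so $D$ is a closed subset of the Stone space $S'\times 2^\N$, hence Stone by \Cref{ClosedInStoneIsStone}. Define $r:D\to Z$ by $r(s',c):=(p(f(s')),\, q_{s'}(c))$. To see $r$ is surjective, given $(x,y):Z$ use surjectivity of $p\circ f$ to produce $s':S'$ with $e:p(f(s'))=x$; then surjectivity of $q_{s'}$ gives $c:C_{s'}$ with $q_{s'}(c)=e_*^{-1}(y)$, and the pair path $(e,\mathrm{refl})$ shows $r(s',c)=(x,y)$.

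The main obstacle is uniformly packaging the fiberwise Stone covers of $Y(x)$ into a single Stone space surjecting onto $Z$. The key is combining \Cref{AxLocalChoice}, which upgrades mere existence to a genuine family indexed by a Stone base, with \Cref{StoneAsClosedSubsetOfCantor}, which places every fiber inside the common ambient Stone space $2^\N$; together these make the Stone assembly $\Sigma_{s':S'}C_{s'}$ expressible as a closed subtype of $S'\times 2^\N$. Without the second ingredient one would only have an abstract family of Stone spaces over $S'$, with no evident way to form a Stone total space.
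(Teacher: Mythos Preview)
Your proof is correct and follows essentially the same approach as the paper: closed identity types via \Cref{ClosedDependentSums}, then embedding the fiberwise Stone covers into $2^\N$ via \Cref{StoneAsClosedSubsetOfCantor}, applying local choice over a Stone cover of $X$, and assembling the result as a closed subset of a product of Stone spaces. The only cosmetic difference is that the paper compresses the step of first fixing a Stone cover $S\twoheadrightarrow X$ and then applying local choice into a single invocation, whereas you make the intermediate $S'\twoheadrightarrow S\twoheadrightarrow X$ explicit.
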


\begin{proof}
Assume $X:\CHaus$ and $Y:X\to \CHaus$. By \Cref{ClosedDependentSums} we have that identity type in $\Sigma_{x:X}Y(x)$ are closed. By \Cref{StoneAsClosedSubsetOfCantor} we know that for any $x:X$ there merely exists a closed $C\subseteq 2^\N$ with a surjection $\Sigma_{2^\N}C \twoheadrightarrow Y(x)$. By local choice we merely get $S:\Stone$ with a surjection $p:S\to X$ such that for all $s:S$ we have $C_s\subseteq 2^\N$ closed and a surjection $\Sigma_{2^\N}C_s\twoheadrightarrow Y(p(s))$. This gives a surjection $\Sigma_{s:S,x:2^\N}C_s(x)\twoheadrightarrow\Sigma_{x:X}Y_x$ and the source is Stone by \Cref{StoneClosedUnderPullback} and \Cref{ClosedInStoneIsStone}.
\end{proof}

\subsection{Stone spaces are stable under sigma types}
We will show that Stone spaces are precisely totally disconnected compact Hausdorff spaces. 
We will use this to prove that a sigma type of Stone spaces is Stone.

\begin{lemma}\label{AlgebraCompactHausdorffCountablyPresented}
Assume $X:\Chaus$, then $2^X$ is countably presented.
\end{lemma}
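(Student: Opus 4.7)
The plan is to show that $2^X$ is overtly discrete as a type, and then conclude by \Cref{ODiscBAareBoole}. I would start by picking a surjection $q : S \twoheadrightarrow X$ with $S:\Stone$, given by the definition of $X$ being compact Hausdorff. Precomposition with $q$ yields an injection $q^\ast : 2^X \hookrightarrow 2^S$, and its image consists exactly of those $f:S\to 2$ that are constant on fibres of $q$. So I would identify $2^X$ with the subtype
\[
\{f : 2^S \mid \forall s,t:S,\ q(s) = q(t) \to f(s) = f(t)\}
\]
of $2^S$.

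The key step will be to show that, writing $P(f)$ for this condition, $P(f)$ is an open proposition for each $f$. For any fixed $s,t:S$, the premise $q(s) = q(t)$ is closed (since identity types in $X:\CHaus$ are closed by definition), while the conclusion $f(s) = f(t)$ is decidable, hence open. \Cref{ImplicationOpenClosed} then gives that the implication is open. As $S \times S$ is Stone and hence compact Hausdorff, iterating \Cref{AllOpenSubspaceOpen} shows that $\forall s,t:S$ of this open proposition remains open, so $P(f)$ is open as desired.

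To conclude, since $S:\Stone$ the Boolean algebra $2^S$ is countably presented and therefore overtly discrete by \Cref{ODiscBAareBoole}. Each $P(f)$ is open, hence overtly discrete by \Cref{PropOpenIffOdisc}. The description $2^X \simeq \Sigma_{f:2^S} P(f)$ then exhibits $2^X$ as a $\Sigma$-type of overtly discrete types, so $2^X$ is overtly discrete by closure of $\ODisc$ under $\Sigma$-types (\Cref{OdiscSigma}). A final invocation of \Cref{ODiscBAareBoole} transfers ``overtly discrete'' back to ``countably presented'' for the Boolean algebra $2^X$.

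The main obstacle will be the openness of $P(f)$: this is where the compact Hausdorff hypothesis really bites, combining the closedness of identity in $X$, the closed-to-open implication being open, and the preservation of openness under universal quantification over compact Hausdorff spaces. Once that openness is pinned down, the rest is a routine use of the dictionary between $\Open$, $\ODisc$, and $\Boole$ developed earlier in the paper.
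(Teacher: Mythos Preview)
Your proposal is correct and follows essentially the same route as the paper: identify $2^X$ with the subtype of $2^S$ cut out by the constancy-on-fibres condition, check that this condition is open using closedness of equality in $X$ together with \Cref{ImplicationOpenClosed} and \Cref{AllOpenSubspaceOpen}, and then conclude via \Cref{PropOpenIffOdisc}, \Cref{OdiscSigma}, and \Cref{ODiscBAareBoole}. If anything, you are slightly more explicit than the paper about the intermediate citations (e.g.\ noting that $S\times S$ is Stone to justify the use of \Cref{AllOpenSubspaceOpen}).
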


\begin{proof}
  There is some surjection $q:S\twoheadrightarrow X$ with $S:\Stone$. 
%
  This induces an injection of Boolean algebras $2^X \hookrightarrow 2^S$.
  Note that $a:S\to 2$ lies in $2^X$ if and only if: 
  \[\forall_{s,t:S}\ q(s) =_X q(t) \to a(s)=a(t).\]
  As equality in $X$ is closed and equality in $2$ is decidable, the implication is open for every $s,t:S$. 
  By \Cref{AllOpenSubspaceOpen}, we conclude that
  $2^X$ is an open subalgebra of $2^S$. 
  Therefore, it is in $\ODisc$ by
  \Cref{PropOpenIffOdisc} and \Cref{OdiscSigma} 
  and in $\Boole$ by \Cref{ODiscBAareBoole}.
%
%
%
\end{proof}
\begin{definition}
For all $X:\Chaus$ and $x:X$,
  we define $Q_x$ the connected component of $x$
  as the intersection of all $D\subseteq X$ decidable such that $x\in D$. 
\end{definition}

\begin{lemma}\label{ConnectedComponentClosedInCompactHausdorff}
For all $X:\CHaus$ with $x:X$, we have that $Q_x$ is a countable intersection of decidable subsets of $X$.
\end{lemma}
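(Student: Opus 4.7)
The plan is to use \Cref{AlgebraCompactHausdorffCountablyPresented}: since $2^X$ is countably presented, by \Cref{ODiscBAareBoole} and \Cref{OdiscQuotientCountableByOpen} (or directly from the shape $2[\N]/(r_n)_{n:\N}$ via \Cref{BooleAsCQuotient}) there merely exists a surjection $d:\N\twoheadrightarrow 2^X$. As the conclusion of the lemma is a proposition, I may unwrap this mere existence and fix such an enumeration, identifying each $d_n$ with the decidable subset of $X$ it represents; by surjectivity, every decidable subset $D\subseteq X$ is merely of the form $d_n$ for some $n:\N$.

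The key construction is to prune the enumeration so that only subsets containing $x$ contribute. Since $d_n(x)\in\{0,1\}$ is decidable, I define for each $n:\N$ a decidable subset
\[ D_n'(y) = \begin{cases} d_n(y) & \text{if } x\in d_n,\\ 1 & \text{otherwise.}\end{cases} \]
I then claim $Q_x = \bigcap_{n:\N} D_n'$, which is exactly the desired presentation as a countable intersection of decidable subsets. The inclusion $Q_x\subseteq \bigcap_{n:\N} D_n'$ is immediate: in the nontrivial branch $x\in d_n$ forces any $y\in Q_x$ into $d_n = D_n'$, while in the other branch $D_n' = \top$ trivially contains $y$. For the converse, given $y\in\bigcap_{n:\N} D_n'$ and any decidable $D\subseteq X$ with $x\in D$, surjectivity of $d$ merely produces $n$ with $d_n = D$; because $y\in D$ is a proposition I may eliminate the truncation, and for that $n$ we have $x\in d_n$, so $D_n' = d_n = D$ and thus $y\in D$.

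The only delicate step is extracting the enumeration $\N\twoheadrightarrow 2^X$, which is really the content of \Cref{AlgebraCompactHausdorffCountablyPresented} together with the countable presentability of the free Boolean algebra $2[\N]$. Once this enumeration is in hand, the rest of the argument is a routine bookkeeping based on the decidable case split on the predicate $x\in d_n$.
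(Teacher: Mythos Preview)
Your proof is correct and is essentially the same as the paper's: the paper also enumerates $2^X$ via \Cref{AlgebraCompactHausdorffCountablyPresented}, defines $E_n$ to be $D_n$ if $x\in D_n$ and $X$ otherwise, and concludes $Q_x=\bigcap_{n:\N}E_n$. You have simply spelled out in more detail where the enumeration comes from and why both inclusions hold.
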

\begin{proof}
  By \Cref{AlgebraCompactHausdorffCountablyPresented},
  we can enumerate the elements of $2^X$, say as $(D_n)_{n:\N}$. 
  For $n:\N$ we define $E_n$ as $D_n$ if $x\in D_n$ and $X$ otherwise. 
  Then $\cap_{n:\N}E_n = Q_x$.
\end{proof}

\begin{lemma}\label{ConnectedComponentSubOpenHasDecidableInbetween}
  Assume $X:\Chaus$ with $x:X$ and suppose $U\subseteq X$ open with $Q_x\subseteq U$. 
  Then we have some decidable $E\subseteq X$ with $x\in E$ and $E\subseteq U$. 
\end{lemma}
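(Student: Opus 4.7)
The plan is to combine the previous lemma (which presents $Q_x$ as a countable intersection of decidable subsets $E_n$ with $x\in E_n$) with the finite intersection property for $\CHaus$ (Lemma CHausFiniteIntersectionProperty) applied to the closed sets $E_n\cap\neg U$. The crucial ingredient making this go through is that open propositions are $\neg\neg$-stable (Remark rmkOpenClosedNegation).

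First I would invoke \Cref{ConnectedComponentClosedInCompactHausdorff} to write $Q_x = \bigcap_{n:\N} E_n$ with each $E_n\subseteq X$ decidable. By construction these can be chosen so that $x\in E_n$ for every $n$. Next, since $U$ is open, $\neg U$ is closed, and each $E_n\cap \neg U$ is the intersection of a decidable subset with a closed one, hence closed. From the hypothesis $Q_x\subseteq U$ we obtain
\[
\bigcap_{n:\N} (E_n\cap \neg U) \;=\; Q_x \cap \neg U \;=\; \emptyset.
\]

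Now I would apply \Cref{CHausFiniteIntersectionProperty} to this decreasing-up-to-relabelling family of closed subsets of $X$: there exists some $k:\N$ with $\bigcap_{n\leq k}(E_n\cap \neg U)=\emptyset$, that is, $\bigl(\bigcap_{n\leq k} E_n\bigr)\cap \neg U=\emptyset$. Set $E := \bigcap_{n\leq k} E_n$. Being a finite intersection of decidable subsets, $E$ is itself decidable, and $x\in E$ because $x$ lies in every $E_n$.

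It remains to check $E\subseteq U$. For any $y\in E$, the equation $E\cap \neg U=\emptyset$ gives $\neg\neg U(y)$; since $U(y)$ is open and hence $\neg\neg$-stable by \Cref{rmkOpenClosedNegation}, we conclude $U(y)$, so $E\subseteq U$. The only subtle step is this last appeal to $\neg\neg$-stability of open propositions; once one notices it, the rest is an immediate application of the finite intersection property, so I do not anticipate any serious obstacle.
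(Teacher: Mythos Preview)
Your proof is correct and follows essentially the same route as the paper: write $Q_x$ as a countable intersection of decidable subsets via \Cref{ConnectedComponentClosedInCompactHausdorff}, intersect with the closed set $\neg U$, apply \Cref{CHausFiniteIntersectionProperty}, and use $\neg\neg$-stability of open propositions to pass from $E\subseteq\neg\neg U$ to $E\subseteq U$. Your additional remark that $x\in E_n$ for all $n$ (hence $x\in E$) is a detail the paper leaves implicit but which indeed follows since $x\in Q_x$.
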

\begin{proof}
  By \Cref{ConnectedComponentClosedInCompactHausdorff}, 
  we have $Q_x = \bigcap_{n:\N}D_n$ with $D_n\subseteq X$ decidable. 
  If $Q_x \subseteq U$, then
  \[Q_x\cap \neg U = \bigcap_{n:\N} (D_n \cap \neg U) = \emptyset.\]
  By \Cref{CHausFiniteIntersectionProperty} there is some $k:\N$ with 
  \[(\bigcap_{n\leq k} D_n )\cap \neg U  = \bigcap_{n\leq k} (D_n \cap \neg U) = \emptyset.\]
  Therefore $\bigcap_{n\leq k} D_n \subseteq \neg\neg U$.
  As $U$ is open, $\neg \neg U = U$ and 
  $E:= \bigcap_{n\leq k} D_n$ is as desired. 
\end{proof}

\begin{lemma}\label{ConnectedComponentConnected}
Assume $X:\Chaus$ with $x:X$. Then any map in $Q_x\to 2$ is constant.
\end{lemma}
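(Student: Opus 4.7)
The plan is to show that for any $f : Q_x \to 2$ the preimages $A = f^{-1}(0)$ and $B = f^{-1}(1)$ partition $Q_x$ into two pieces, and then to reduce constancy to showing one of them is empty by producing a decidable subset of $X$ that contains $x$ but avoids the other.

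First I would note that $A$ and $B$ are disjoint decidable subsets of $Q_x$ whose union is $Q_x$, so each is closed in $Q_x$. Since $Q_x$ is itself closed in $X$ as a countable intersection of decidable subsets (\Cref{ConnectedComponentClosedInCompactHausdorff}), transitivity of closedness (\Cref{ClosedTransitive}) makes $A$ and $B$ closed in $X$. Without loss of generality $f(x) = 0$, so $x \in A$, and the goal becomes $B = \emptyset$.

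Next I would separate $A$ and $B$ using \Cref{CHausSeperationOfClosedByOpens}, obtaining disjoint opens $U, V \subseteq X$ with $A \subseteq U$ and $B \subseteq V$. Since $Q_x \subseteq A \cup B \subseteq U \cup V$ and the latter is open, \Cref{ConnectedComponentSubOpenHasDecidableInbetween} yields a decidable $E \subseteq X$ with $x \in E \subseteq U \cup V$.

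The crux is to verify that $D := E \cap U$ is decidable in $X$. On the one hand $D$ is open as the intersection of a decidable set with an open set. On the other hand, because $E \subseteq U \cup V$ and $U \cap V = \emptyset$, we have $E \cap U = E \cap \neg V$, which is closed as the intersection of a decidable set with a closed set. A pointwise application of \Cref{ClopenDecidable} then shows $D$ is decidable. Once this is in hand, $x \in D$ (since $x \in E$ and $x \in A \subseteq U$) and $D \cap B = \emptyset$ (since $D \subseteq U$ is disjoint from $V \supseteq B$), so the defining property of $Q_x$ forces $Q_x \subseteq D$, giving $B = Q_x \cap B \subseteq D \cap B = \emptyset$. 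I expect this clopen-implies-decidable step to be the main point requiring care, though it is a direct application of \Cref{ClopenDecidable} once the two descriptions of $E \cap U$ are noticed.
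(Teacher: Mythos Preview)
Your proof is correct and follows essentially the same route as the paper's: partition $Q_x$ into closed pieces $A,B$, separate them by disjoint opens $U,V$, squeeze a decidable set between $Q_x$ and $U\cup V$, and observe that intersecting with $U$ (equivalently with $\neg V$) yields a clopen hence decidable set containing $x$ and avoiding $B$. The only difference is cosmetic (the roles of the names $D$ and $E$ are swapped relative to the paper).
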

\begin{proof}
Assume $Q_x = A\cup B$ with $A,B$ decidable and disjoint subsets of $Q_x$. Assume $x\in A$. 
By \Cref{ConnectedComponentClosedInCompactHausdorff}, $Q_x\subseteq X$ is closed. 
Using \Cref{ClosedTransitive}, it follows that $A,B\subseteq X$ are closed and disjoint.
By \Cref{CHausSeperationOfClosedByOpens} there exist $U,V$ disjoint open such that $A\subseteq U$ and $B\subseteq V$. 
By \Cref{ConnectedComponentSubOpenHasDecidableInbetween} we have a decidable $D$ such that $Q_x\subseteq D\subseteq U\cup V$. 
Note that $E := D\cap U = D \cap (\neg V)$ is clopen, hence decidable by \Cref{ClopenDecidable}.
But $x\in E$, hence $B\subseteq Q_x \subseteq E$ but $B \cap E = \emptyset$, hence $B=\emptyset$. 
\end{proof}

\begin{lemma}\label{StoneCompactHausdorffTotallyDisconnected}
Let $X:\CHaus$, then $X$ is Stone if and only $\forall_{x:X}\ Q_x=\{x\}$.
\end{lemma}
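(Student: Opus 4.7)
My plan is to analyse the evaluation map $\eta : X \to Sp(2^X)$, which is well-defined since $2^X\in\Boole$ by \Cref{AlgebraCompactHausdorffCountablyPresented}, and sends a point $x$ to the Boolean morphism $D\mapsto D(x)$. Both directions will follow from properties of $\eta$.

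For the forward direction, I will assume $X$ is Stone, say $X=Sp(B)$ for some $B:\Boole$ with countable generators $(b_n)_{n:\N}$. Fix $x:X$; I need to show that any $y\in Q_x$ equals $x$. Since equality in $X$ is closed by \Cref{StoneEqualityClosed} and hence $\neg\neg$-stable by \Cref{rmkOpenClosedNegation}, it suffices to derive a contradiction from $y\neq x$. Because $y=x$ is equivalent to the closed proposition $\forall_{n:\N}\ x(b_n)=y(b_n)$, \Cref{ClosedMarkov} yields some $n:\N$ with $x(b_n)\neq y(b_n)$. Then the decidable subset $\{z:X\mid z(b_n)=0\}$, or its complement, contains $x$ but not $y$, contradicting $y\in Q_x$.

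For the backward direction, assume $Q_x=\{x\}$ for all $x:X$, and let me show $\eta$ is an equivalence. Since $X$ is a set (its identity types being propositions), it suffices to show $\eta$ is injective and surjective. Injectivity is immediate: if $\eta(x)=\eta(y)$, then $y$ lies in every decidable subset of $X$ containing $x$, so $y\in Q_x=\{x\}$. For surjectivity, fix a Boolean morphism $\phi:2^X\to 2$. By \Cref{ODiscBAareBoole} and \Cref{OdiscQuotientCountableByOpen} I obtain a surjection $\N\twoheadrightarrow 2^X$ giving an enumeration $(F_n)_{n:\N}$ of $2^X$. Define $D_n$ to be $F_n$ if $\phi(F_n)=1$ and the top element of $2^X$ otherwise, and set $E_n=D_0\wedge\cdots\wedge D_n$. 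Each $E_n$ is a decidable subset of $X$ with $\phi(E_n)=1$, so $E_n$ is non-empty (else $\phi(E_n)=\phi(0)=0$). The $(E_n)$ form a decreasing sequence of closed subsets of $X$, so by \Cref{CHausFiniteIntersectionProperty} the intersection $\bigcap_n E_n$ is non-empty, hence merely inhabited by \Cref{InhabitedClosedSubSpaceClosedCHaus}. Any such $x$ satisfies $\eta(x)=\phi$: for $F\in 2^X$ with $\phi(F)=1$ we get $x\in F$ by construction, and if $\phi(F)=0$ then $\phi(\neg F)=1$, forcing $x\in\neg F$.

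The main obstacle is the surjectivity step, where an abstract Boolean morphism $\phi$ must be converted into an actual point of $X$; this is the profinite-style behaviour of compact Hausdorff spaces, made to work here by the countability of $2^X$ and the countable finite intersection property \Cref{CHausFiniteIntersectionProperty}. The rest of the proof is bookkeeping around $\neg\neg$-stability of closed propositions.
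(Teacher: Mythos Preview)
Your proof is correct. The forward direction and the injectivity of $\eta$ match the paper (though your forward direction is more explicit than the paper's one-line appeal to Stone duality, and could be streamlined: since $\{z\mid z(b_n)=x(b_n)\}$ is decidable and contains $x$, any $y\in Q_x$ satisfies $y(b_n)=x(b_n)$ for every $n$, without detouring through $\neg\neg$-stability or \Cref{ClosedMarkov}).

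The genuine difference is in the surjectivity of $\eta$. The paper's argument is purely structural: take any Stone cover $q:S\twoheadrightarrow X$, note that the induced injection $2^X\hookrightarrow 2^S$ gives, via \Cref{SurjectionsAreFormalSurjections}, a surjection $p:S=Sp(2^S)\twoheadrightarrow Sp(2^X)$, and observe that $p=\eta\circ q$, so $\eta$ is surjective. This is a two-line proof using only the axioms and the existence of a Stone cover. Your argument instead realises a given $\phi\in Sp(2^X)$ as a point of $X$ by enumerating $2^X$, forming a decreasing sequence of nonempty decidable subsets, and invoking \Cref{CHausFiniteIntersectionProperty} and \Cref{InhabitedClosedSubSpaceClosedCHaus} to extract a point from the intersection. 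This is the classical ``compactness plus countability'' picture of the Stone representation theorem made to work in the present setting. It is longer, but it is self-contained and makes visible \emph{why} the compact Hausdorff hypothesis matters topologically, whereas the paper's proof hides that behind \Cref{SurjectionsAreFormalSurjections}.
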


\begin{proof}
  By \Cref{AxStoneDuality}, it is clear that for all $x:S$ with $S:\Stone$ we have that $Q_x=\{x\}$.
  Conversely, assume $X:\CHaus$ such that $\forall_{x:X}\ Q_x = \{x\}$.
  We claim that the evaluation map $e:X \to Sp(2^X)$ is both injective and surjective, hence an equivalence. 
    Let $x,y:X$ be such that $e(x)=e(y)$, i.e. such that $f(x) = f(y)$ for all $f:2^X$. Then $y \in Q_x$, hence $x=y$ by assumption. Thus $e$ is injective. 
    Let $q:S\twoheadrightarrow X$ be a surjective map. 
    It induces an injection $2^X \hookrightarrow 2^S$, which by \Cref{SurjectionsAreFormalSurjections}
    induces a surjection $p:Sp(2^S) \twoheadrightarrow Sp(2^X)$. 
    Note that $e\circ q$ is equal to $p$ so $e$ is surjective. 
%
%
%
\end{proof}

\begin{theorem}
  \label{stone-sigma-closed}
Assume $S:\Stone$ and $T:S\to\Stone$. Then $\Sigma_{x:S}T(x)$ is Stone.
\end{theorem}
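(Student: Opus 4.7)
The plan is to combine \Cref{SigmaCompactHausdorff} with the characterisation of Stone spaces as totally disconnected compact Hausdorff spaces from \Cref{StoneCompactHausdorffTotallyDisconnected}. \Cref{SigmaCompactHausdorff} immediately gives that $\Sigma_{x:S}T(x)$ is compact Hausdorff, so the task reduces to showing that for every $(x,t):\Sigma_{x:S}T(x)$, the connected component $Q_{(x,t)}$ equals $\{(x,t)\}$.

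The first step I would take is to confine $Q_{(x,t)}$ to the fiber $T(x)$ by means of the first projection $\pi:\Sigma_{y:S}T(y)\to S$. For any decidable $D\subseteq S$ containing $x$, the pullback $\pi^{-1}(D)$ is a decidable subset of the total space containing $(x,t)$, so $Q_{(x,t)}\subseteq\pi^{-1}(D)$. Intersecting over all such $D$ yields $\pi(Q_{(x,t)})\subseteq Q_x$, and since $S$ is Stone the easy direction of \Cref{StoneCompactHausdorffTotallyDisconnected} gives $Q_x=\{x\}$. Hence $Q_{(x,t)}$ is contained in the fiber over $x$, which we identify with $T(x)$ via $u\mapsto(x,u)$.

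The second step exploits that $T(x)$ is itself Stone, so $Q^{T(x)}_t=\{t\}$ by the same lemma. Given any $(x,u)\in Q_{(x,t)}$, to show $u=t$ it suffices to check that $u\in D$ for every decidable $D\subseteq T(x)$ with $t\in D$. Such a $D$ has an indicator $T(x)\to 2$; composing with the inclusion $Q_{(x,t)}\subseteq T(x)$ from the previous step gives a map $Q_{(x,t)}\to 2$, which is constant by \Cref{ConnectedComponentConnected}. As the value at $(x,t)$ is $1$, so is the value at $(x,u)$, yielding $u\in D$ as required.

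The main conceptual obstacle is recognising that the right way to exploit the hypothesis $T:S\to\Stone$ is to first use the projection to $S$ to force $Q_{(x,t)}$ into a single fiber, rather than attempting to lift decidable subsets of a fiber back to the total space (which would be harder, since a fibrewise clopen need not extend to a clopen of the total space). Once this reduction is in place, the remainder is a clean application of the connectedness lemma \Cref{ConnectedComponentConnected} together with the fibrewise total disconnectedness supplied by the hypothesis.
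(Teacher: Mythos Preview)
Your proof is correct and follows essentially the same route as the paper: reduce to total disconnectedness via \Cref{SigmaCompactHausdorff} and \Cref{StoneCompactHausdorffTotallyDisconnected}, use the first projection to confine $Q_{(x,t)}$ to the fiber $\{x\}\times T(x)$, and then apply \Cref{ConnectedComponentConnected} to a map $Q_{(x,t)}\to 2$ obtained from an indicator $T(x)\to 2$. The only cosmetic differences are that the paper phrases the first step in terms of functions $f:S\to 2$ rather than decidable subsets $D\subseteq S$, and in the second step compares two generic points $z,z':Q_{(x,y)}$ rather than specialising one of them to $(x,t)$.
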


\begin{proof}
By \Cref{SigmaCompactHausdorff} we have that $\Sigma_{x:S}T(x)$ is compact Hausdorff. 
By \Cref{StoneCompactHausdorffTotallyDisconnected} 
it is enough to show that for all $x:S$ and $y:T(x)$ 
we have that $Q_{(x,y)}$ is a singleton.
Assume $(x',y')\in Q_{(x,y)}$, then for any map $f:S\to 2$ we have that:
\[ f(x) = f\circ \pi_1(x,y) = f\circ \pi_1(x',y') = f(x')\]
so that $x'\in Q_x$ and since $S$ is Stone, by \Cref{StoneCompactHausdorffTotallyDisconnected} we have that $x=x'$.
Therefore we have $Q_{(x,y)}\subseteq \{x\}\times T(x)$. 
Assume $z,z':Q_{(x,y)}$, then for any map $g:T(x)\to 2$ we have that $g(z)=g(z')$ by 
\Cref{ConnectedComponentConnected}. Since $T(x)$ is Stone, 
we conclude $z=z'$ by \Cref{StoneCompactHausdorffTotallyDisconnected}.
\end{proof}

\section{The unit interval as a Compact Hausdorff space}
Since we have dependent choice, the unit interval $\mathbb I = [0,1]$ can be defined using 
Cauchy reals or Dedekind reals. 
We can freely use results from constructive analysis \cite{Bishop}. 
As we have $\neg$WLPO, MP and LLPO, we can use the results from 
constructive reverse mathematics that follow from these principles \cite{ReverseMathsBishop, HannesDiener}. 
\begin{definition}
  \label{def-cs-Interval}
  We define for each $n:\N$ the Stone space $2^n$ of binary sequences of length $n$.
  And we define $cs_n:2^n \to \mathbb Q$ by 
  $cs_n(\alpha) = \sum_{i < n } \frac{\alpha(i)}{2^{i+1}}.$
  Finally we write $\sim_n$ for the binary relation on $2^n$ given by 
  $\alpha\sim_n \beta 
  \leftrightarrow \left|cs_n(\alpha) - cs_n(\beta)\right|\leq\frac{1}{2^n}$.
\end{definition}
\begin{remark}
  The inclusion $Fin(n) \hookrightarrow \N$ induces a restriction 
  $\_|_n : 2^\N \to 2^n$ for each $n:\N$. 
\end{remark}
\begin{definition}
  We define $cs:2^\N \to \I$ as 
  $cs(\alpha) = 
  \sum_{i :\N } \frac{\alpha(i)}{2^{i+1}}.
  $
\end{definition}

\begin{theorem}\label{IntervalIsCHaus}
  $\I$ is compact Hausdorff.
\end{theorem}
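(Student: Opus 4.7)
The plan is to verify the two defining conditions of compact Hausdorffness: (a) identity types in $\I$ are closed propositions, and (b) there is a surjection from some Stone space onto $\I$. Since $2^\N$ is Stone by Example \ref{boolean-algebra-examples}(iii) and the canonical map $cs:2^\N\to\I$ is already in hand, the natural strategy is to show $cs$ is surjective.

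For (a): working with Cauchy reals, the equality $x=_\I y$ unfolds to a countable conjunction of decidable comparisons on the rational approximants of $x$ and $y$ (for example $\forall n:\N,\ |x_n-y_n|\leq 2/n$ with the standard modulus), and is therefore closed by the closure of closed propositions under countable conjunction.

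For (b): given $x:\I$, I would apply dependent choice (\Cref{axDependentChoice}) to the tower
\[
  E_n \ =\ \{\alpha:2^n\mid cs_n(\alpha)\leq x\leq cs_n(\alpha)+2^{-n}\}.
\]
Each $E_n$ is a closed subtype of the finite set $2^n$, hence Stone by \Cref{ClosedInStoneIsStone} and in particular a set. The restriction map $E_{n+1}\to E_n$ is surjective: given $\alpha\in E_n$, the values $cs_{n+1}(\alpha\cdot 0)=cs_n(\alpha)$ and $cs_{n+1}(\alpha\cdot 1)=cs_n(\alpha)+2^{-(n+1)}$ show that $\alpha\cdot b\in E_{n+1}$ is equivalent to a specific side of the comparison between $x$ and the dyadic rational $cs_n(\alpha)+2^{-(n+1)}$, so the LLPO-dichotomy $x\leq cs_n(\alpha)+2^{-(n+1)}\vee x\geq cs_n(\alpha)+2^{-(n+1)}$ furnishes a witness in $E_{n+1}$. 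Since $E_0$ is a singleton, \Cref{axDependentChoice} supplies an element of $\lim_n E_n$; viewing this limit inside $\lim_n 2^n=2^\N$ yields $\alpha:2^\N$ with $\alpha|_n\in E_n$ for all $n$, and the telescoping bounds $cs_n(\alpha|_n)\leq x\leq cs_n(\alpha|_n)+2^{-n}$ pass to the limit to give $cs(\alpha)=x$.

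The principal obstacle is securing the LLPO-dichotomy $x\leq q\vee q\leq x$ for $x:\I$ and $q:\mathbb{Q}$ used at each branching step; this is a standard consequence of \Cref{LLPO} in constructive reverse mathematics, which the paper explicitly permits us to invoke. Minor bookkeeping includes checking that the $E_n$ are sets (so that the statement of \Cref{axDependentChoice} applies cleanly) and that the convention for Cauchy reals makes equality a genuine countable conjunction of decidable propositions — both routine, but needing to be stated precisely in the written-out proof.
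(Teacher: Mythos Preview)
Your proposal is correct and follows essentially the same strategy as the paper: both establish that $cs:2^\N\to\I$ is surjective via LLPO and that equality in $\I$ is a countable conjunction of decidable propositions, hence closed. The paper is terser---it simply invokes LLPO for surjectivity and characterizes $cs(\alpha)=cs(\beta)$ via $\forall_{n}\,\alpha|_n\sim_n\beta|_n$---whereas you spell out the binary-expansion construction explicitly using dependent choice and handle closedness of equality directly via Cauchy approximants, but these are just more detailed renderings of the same argument.
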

\begin{proof}
  By LLPO, we have that $cs$ is surjective.   
  Note that $cs(\alpha) = cs(\beta)$ if and only if 
  for all $n:\N$ we have $\alpha|_n \sim_n \beta|_n$. 
  This is a countable conjunction of decidable propositions.
\end{proof}

\begin{remark}
  Following Definitions 2.7 and 2.10 of \cite{Bishop}, we have that $x<y$ is open for all $x,y:\I$. Hence open intervals are open. 
\end{remark}
\begin{lemma}\label{ImageDecidableClosedInterval}
  For $D\subseteq 2^\N$ decidable, we have $cs(D)$ a finite union of closed intervals. 
\end{lemma}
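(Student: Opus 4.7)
The plan is to reduce to the case where $D$ depends on only finitely many coordinates, and then use the surjectivity (already proved via LLPO in \Cref{IntervalIsCHaus}) to recognize each fiber of $cs$ over such a cylinder as a closed interval.

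First I would observe that any decidable subset $D \subseteq 2^\N$ corresponds under \Cref{AxStoneDuality} to an element of $2^{2^\N} \cong 2[\N]$. Since any element of the free Boolean algebra $2[\N]$ involves only finitely many generators, there merely exists some $n:\N$ and a subset $D' \subseteq 2^n$ such that
\[
  D = \{\alpha : 2^\N \mid \alpha|_n \in D'\}.
\]
Hence $cs(D) = \bigcup_{\sigma \in D'} cs(C_\sigma)$, where $C_\sigma = \{\alpha : 2^\N \mid \alpha|_n = \sigma\}$ is the cylinder over $\sigma$, and $D'$ is finite.

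Next I would identify $cs(C_\sigma)$ with the closed interval $[cs_n(\sigma),\, cs_n(\sigma) + 1/2^n]$. For $\alpha \in C_\sigma$, we have
\[
  cs(\alpha) = cs_n(\sigma) + \sum_{i \geq n} \tfrac{\alpha(i)}{2^{i+1}},
\]
and the tail sum lies in $[0, 1/2^n]$, giving the inclusion $cs(C_\sigma) \subseteq [cs_n(\sigma),\, cs_n(\sigma) + 1/2^n]$. For the reverse inclusion, given any $y$ in that interval, the value $y - cs_n(\sigma)$ lies in $[0, 1/2^n]$, which is the image of the cylinder $\{\beta : 2^\N \mid \beta|_n = 0\}$ under $cs$ by LLPO (essentially a shift of the surjectivity argument in \Cref{IntervalIsCHaus}). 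Prepending $\sigma$ to such a $\beta$ gives a preimage in $C_\sigma$.

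Combining these steps gives $cs(D) = \bigcup_{\sigma \in D'} [cs_n(\sigma),\, cs_n(\sigma) + 1/2^n]$, a finite union of closed intervals. The main subtlety is the surjectivity onto each sub-interval in the second step, which rests on LLPO exactly as in \Cref{IntervalIsCHaus}; once one has surjectivity of $cs : 2^\N \to \I$, the rescaled version on cylinders follows by translating and scaling, so no new principle is needed. The finiteness of the union is automatic from the finiteness of $D' \subseteq 2^n$.
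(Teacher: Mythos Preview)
Your proof is correct and follows the same approach as the paper: decompose $D$ as a finite union of cylinder sets $C_\sigma$ (decidable subsets of $2^\N$ correspond via duality to elements of $2[\N]$, hence depend on finitely many coordinates), and observe that $cs(C_\sigma)$ is a closed interval. You simply supply more detail than the paper, which just asserts both steps; in particular your explicit identification $cs(C_\sigma)=[cs_n(\sigma),cs_n(\sigma)+1/2^n]$ and the LLPO-based surjectivity argument make precise what the paper leaves implicit.
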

\begin{proof}
  If $D$ is given by those $\alpha:2^\N$ with a fixed initial segment, $cs(D)$ is a closed interval. 
  Any decidable subset of $2^\N$ is a finite union of such subsets. 
\end{proof}
\begin{lemma}\label{complementClosedIntervalOpenIntervals}
  The complement of a finite union of closed intervals is 
  a finite union of open intervals. 
\end{lemma}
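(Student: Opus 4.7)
The plan is to proceed by induction on the number $n$ of closed intervals in the union. The base case $n=0$ is trivial, since the complement is $\I$ itself, which is (degenerately) a union of one open interval of $\I$. For the inductive step, I would apply De Morgan to write
\[
  \I \setminus \left([a,b] \cup \bigcup_{i=1}^n [a_i,b_i]\right) = (\I \setminus [a,b]) \cap \left(\I \setminus \bigcup_{i=1}^n [a_i,b_i]\right),
\]
apply the induction hypothesis to the right factor, and reduce to showing that the intersection of $\I\setminus[a,b]$ with a finite union of open intervals of $\I$ is again such a finite union.

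The complement $\I\setminus[a,b]$ equals $\{x:\I \mid x < a\}\cup\{x:\I \mid x > b\}$, and the previous remark tells us that each of $\{x:\I \mid x < a\}$ and $\{x:\I \mid x > b\}$ is open in $\I$; we regard these as (possibly degenerate) open intervals $[0,a)$ and $(b,1]$ of $\I$. Distributing intersection over union then reduces the whole problem to the single claim that the intersection of two open intervals of $\I$ is again an open interval of $\I$, possibly empty. I would prove this via the standard formula $(c,d)\cap(c',d')=(\max(c,c'),\min(d,d'))$, with analogous formulas at the endpoints, using that $\max$ and $\min$ of real numbers are constructively available from Bishop's analysis, which we may freely use.

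The main obstacle is essentially bookkeeping: one must track the three shapes of open subintervals of $\I$, namely interior intervals $(c,d)$, left-endpoint intervals $[0,b)$ and right-endpoint intervals $(a,1]$, and verify the intersection formula for each combination. Constructively, the possibility that $\max(c,c') \geq \min(d,d')$ gives an empty intersection is harmless, since the empty set is vacuously a finite union of open intervals and can simply be dropped from the resulting enumeration.
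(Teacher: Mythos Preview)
The paper states this lemma without proof, presumably regarding it as routine. Your induction-and-De-Morgan argument is a correct way to fill in the details. One point worth making explicit is the equality $\I \setminus [a,b] = \{x:\I \mid x<a\} \cup \{x:\I \mid x>b\}$: since $a\leq x$ means $\neg(x<a)$, membership in the complement unfolds to $\neg\neg(x<a \lor b<x)$, and you need the $\neg\neg$-stability of open propositions (available here via \Cref{rmkOpenClosedNegation}) to pass to the bare disjunction. With that granted, the remaining bookkeeping with $\max$/$\min$ for intersections of open intervals goes through as you describe.
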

By \Cref{CompactHausdorffTopology} we can thus conclude:
\begin{lemma}\label{IntervalTopologyStandard}
  Every open $U\subseteq \I$ can be written as a countable union of open intervals.
\end{lemma}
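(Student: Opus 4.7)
The plan is to combine three previously established results in a direct way. By \Cref{IntervalIsCHaus} we know $\I$ is compact Hausdorff and that $cs : 2^\N \twoheadrightarrow \I$ is a surjection from a Stone space. Given an open $U \subseteq \I$, \Cref{CompactHausdorffTopology} then provides a sequence $(D_n)_{n:\N}$ of decidable subsets of $2^\N$ such that
\[
   U \;=\; \bigcup_{n:\N} \neg\, cs(D_n).
\]

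The next step is to unfold each term $\neg\, cs(D_n)$ into a finite union of open intervals. By \Cref{ImageDecidableClosedInterval}, $cs(D_n)$ is a finite union of closed intervals, and by \Cref{complementClosedIntervalOpenIntervals}, its complement is a finite union of open intervals. So each $\neg\, cs(D_n)$ is a finite union of open intervals, say $\bigcup_{i \leq k_n} J_{n,i}$ with $J_{n,i}$ open intervals.

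Finally, one reindexes this double union as a single countable union. Concretely, using a surjection $\N \twoheadrightarrow \Sigma_{n:\N}\, \mathrm{Fin}(k_n+1)$ (which exists since the indexing type is overtly discrete and inhabited), one rewrites $U$ as a countable union of open intervals indexed by $\N$. No step requires genuine work beyond invoking the cited lemmas; the content of the statement is really absorbed in \Cref{CompactHausdorffTopology} and \Cref{ImageDecidableClosedInterval}. The only mild subtlety is being careful that the reindexing is done on the basis of a concrete surjection, rather than by appealing to set-theoretic countability in the ambient type theory.
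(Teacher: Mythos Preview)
Your proposal is correct and matches the paper's approach exactly: the paper simply says ``By \Cref{CompactHausdorffTopology} we can thus conclude'', relying implicitly on the two preceding lemmas \Cref{ImageDecidableClosedInterval} and \Cref{complementClosedIntervalOpenIntervals}, which is precisely the chain you spell out. Your explicit reindexing step is a reasonable elaboration of what the paper leaves unsaid.
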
 
%
  It follows that the topology of $\I$ is generated by open intervals, 
  which corresponds to the standard topology on $\I$. 
  Hence our notion of continuity agrees with the 
  $\epsilon,\delta$-definition of continuity one would expect and we get the following:
\begin{theorem}
  Every function $f:\I\to \I$ is continuous in the $\epsilon,\delta$-sense. 
\end{theorem}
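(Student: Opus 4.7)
The plan is to deduce $\epsilon,\delta$-continuity from two ingredients already in hand: first, the general fact (recorded right after the definition of open subsets of a type) that every function is continuous in the synthetic sense, i.e.\ preimages of synthetic opens are synthetic opens; and second, \Cref{IntervalTopologyStandard}, which identifies the synthetic opens of $\I$ with countable unions of open intervals. Together these say that the synthetic topology on $\I$ is the Euclidean one, and the theorem should fall out by unfolding.

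Concretely, I would fix $x:\I$ and $\epsilon>0$ and consider $V := (f(x)-\epsilon,\,f(x)+\epsilon)\cap\I$. This is a basic open interval, hence synthetically open by \Cref{IntervalTopologyStandard}, so synthetic continuity of $f$ makes $U := f^{-1}(V)$ a synthetic open of $\I$ containing $x$. Applying \Cref{IntervalTopologyStandard} a second time I would write $U = \bigcup_{n:\N}(a_n,b_n)$, with the endpoints $a_n,b_n$ the dyadic rationals delivered by the proof of that lemma (which goes through $cs$ and decidable subsets of $2^\N$).

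The condition $x\in U$ now reads as the open proposition $\exists_{n:\N}(a_n<x<b_n)$. I would extract an honest index $n$ from this by invoking Markov's principle (\Cref{MarkovPrinciple}): each strict inequality between a real and a rational is, by the Cauchy definition of reals, a countable disjunction of decidable propositions, so the nested existential flattens to a single $\exists_{k:\N}(\gamma_k=1)$ of the form required by MP. Setting $\delta := \min(x-a_n,\,b_n-x) > 0$, any $y:\I$ with $|y-x|<\delta$ lies in $(a_n,b_n)\subseteq U$, so $f(y)\in V$, i.e.\ $|f(y)-f(x)|<\epsilon$.

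The main obstacle is the witness-extraction step: one must package the mere existential $\exists_{n:\N}(a_n<x<b_n)$ as a single open proposition over $\N$ in the exact form demanded by \Cref{MarkovPrinciple}, so that a concrete $n$, and hence a concrete modulus $\delta$, can actually be produced. Once this is verified, the rest is routine translation between the synthetic and metric descriptions of the topology on $\I$; no further topology is needed, because the substantive content is already contained in \Cref{IntervalTopologyStandard}.
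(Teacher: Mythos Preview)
Your approach is essentially the paper's: both derive $\epsilon,\delta$-continuity by combining synthetic continuity with \Cref{IntervalTopologyStandard}, which identifies the synthetic topology on $\I$ with the metric one; the paper simply asserts this identification suffices without spelling out the extraction of $\delta$. Your invocation of Markov's principle for witness extraction is correct but unnecessary if $\epsilon,\delta$-continuity is read with a truncated existential on $\delta$ (as is standard), since then the target is a proposition and one may simply work under the mere existentials delivered by \Cref{IntervalTopologyStandard}.
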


\section{Cohomology}
In this section we compute $H^1(S,\Z) = 0$ for $S$ Stone, and show that $H^1(X,\Z)$ for $X$ compact Hausdorff can be computed using \v{C}ech cohomology. We then apply this to compute $H^1(\I,\Z)=0$. 

\begin{remark}
We only work with the first cohomology group with coefficients in $\Z$ as it is sufficient for the proof of Brouwer's fixed-point theorem, but the results could be extended to $H^n(X,A)$ for $A$ any family of countably presented abelian groups indexed by $X$.
\end{remark}

\begin{remark}
We write $Ab$ for the type of Abelian groups and if $G:Ab$ we write $\B G$ for the delooping of $G$ \cite{hott,davidw23}. This means that $H^1(X,G)$ is the set truncation of $X \to \B G$. 
\end{remark}

\subsection{\v{C}ech cohomology}

\begin{definition}
Given a type $S$, types $T_x$ for $x:S$ and $A:S\to\mathrm{Ab}$, we define $\check{C}(S,T,A)$ as the chain complex:
\[
\begin{tikzcd}
     \prod_{x:S}A_x^{T_x} \ar[r,"d_0"] & \prod_{x:S}A_x^{T_x^2}\ar[r,"d_1"] &  \prod_{x:S}A_x^{T_x^3}
\end{tikzcd}
\]
with the usual boundary maps:
\begin{align*}
d_0(\alpha)_x(u,v) =&\ \alpha_x(v)-\alpha_x(u)\\
d_1(\beta)_x(u,v,w) =&\ \beta_x(v,w) - \beta_x(u,w) + \beta_x(u,w)
\end{align*}
\end{definition}

\begin{definition}
Given a type $S$, types $T_x$ for $x:S$ and $A:S\to\mathrm{Ab}$, we define its \v{C}ech cohomology groups by:
\[
  \check{H}^0(S,T,A) = \mathrm{ker}(d_0)\quad \quad \quad \check{H}^1(S,T,A) = \mathrm{ker}(d_1)/\mathrm{im}(d_0)
\]
We call elements of $\mathrm{ker}(d_1)$ cocycles and elements of $\mathrm{im}(d_0)$ coboundaries.
\end{definition}

This means that $\check{H}^1(S,T,A) = 0$ if and only if $\check{C}(S,T,A)$ is exact. Now we give three general lemmas about \v{C}ech complexes.

\begin{lemma}\label{section-exact-cech-complex}
Assume a type $S$, types $T_x$ for $x:S$ and $A:S\to\mathrm{Ab}$ with $t:\prod_{x:S}T_x$. Then $\check{C}(S,T,A)$ is exact.
\end{lemma}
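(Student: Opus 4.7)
The plan is to exhibit an explicit contracting homotopy built from the global section $t$, which is the standard proof for an open cover with a section: such a cover is Čech-acyclic. Concretely, exactness of the three-term complex here amounts to showing $\ker(d_1)\subseteq\mathrm{im}(d_0)$, since the reverse inclusion $d_1\circ d_0 = 0$ is an immediate algebraic check.

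Given a cocycle $\beta : \prod_{x:S} A_x^{T_x^2}$ with $d_1(\beta)=0$, I would define a candidate primitive
\[
\alpha_x(u)\ :=\ \beta_x(t_x,u)
\]
using the section $t : \prod_{x:S} T_x$. This makes sense because $t_x$ supplies a canonical first argument in each fiber. Then I would compute
\[
d_0(\alpha)_x(u,v)\ =\ \alpha_x(v) - \alpha_x(u)\ =\ \beta_x(t_x,v) - \beta_x(t_x,u),
\]
and use the cocycle identity $d_1(\beta)_x(t_x,u,v) = 0$, i.e.\ $\beta_x(u,v) - \beta_x(t_x,v) + \beta_x(t_x,u) = 0$, to rewrite the right-hand side as $\beta_x(u,v)$. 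This gives $d_0(\alpha) = \beta$, as required.

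If the intended reading of ``exact'' also includes exactness at $C^0$ (so $\check{H}^0 = 0$), which is not the conventional meaning but worth ruling out, one would similarly use $t$: a 0-cochain $\alpha$ with $d_0(\alpha)=0$ satisfies $\alpha_x(u)=\alpha_x(t_x)$, but in the three-term display written in the paper there is no map out of $C^0$ on the left, so exactness there is not required. I would therefore only state the $\ker(d_1) = \mathrm{im}(d_0)$ part and possibly extend $h$ to higher degrees by $h(\gamma)_x(u_1,\dots,u_n) := \gamma_x(t_x,u_1,\dots,u_n)$ if later lemmas need the full contracting homotopy.

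There is no real obstacle here: the argument is entirely formal once $\alpha$ is defined, and the use of $t$ is the only nontrivial ingredient. The cocycle identity evaluated at the triple $(t_x,u,v)$ does all the work, and no appeal to any of the earlier topological axioms is needed — this lemma is pure homological algebra internal to type theory.
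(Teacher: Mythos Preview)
Your proposal is correct and essentially identical to the paper's proof: the paper also defines $\alpha_x(u) = \beta_x(t_x,u)$ and checks $d_0(\alpha)_x(u,v) = \beta_x(t_x,v) - \beta_x(t_x,u) = \beta_x(u,v)$ via the cocycle identity at $(t_x,u,v)$. Your additional remarks about exactness at $C^0$ and the higher contracting homotopy are not in the paper but are harmless commentary.
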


\begin{proof}
Assume given a cocycle, i.e. $\beta:\prod_{x:S}A_x^{T_x^2}$ such that for all $x:S$ and $u,v,w:T_x$ we have that $\beta_x(u,v)+\beta_x(v,w) = \beta_x(u,w)$. We define $\alpha:\prod_{x:S}A_x^{T_x}$ by $\alpha_x(u) = \beta_x(t_x,u)$. Then for all $x:S$ and $u,v:T_x$ we have that $d_0(\alpha)_x(u,v) =  \beta_x(t_x,v) - \beta_x(t_x,u) = \beta_x(u,v)$ so that $\beta$ is a coboundary.
\end{proof}

\begin{lemma}\label{canonical-exact-cech-complex}
Given a type $S$, types $T_x$ for $x:S$ and $A:S\to\mathrm{Ab}$, we have that $\check{C}(S,T,\lambda x.A_x^{T_x})$ is exact.
\end{lemma}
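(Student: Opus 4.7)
The plan is to reduce this lemma to Lemma \ref{section-exact-cech-complex} by enlarging the index type. I would take $S' := \Sigma_{x:S} T_x$ with $T'_{(x,t)} := T_x$ and $A'_{(x,t)} := A_x$; the second projection $(x,t)\mapsto t$ then provides a global section of $T'$ over $S'$, so $\check{C}(S',T',A')$ is exact by Lemma \ref{section-exact-cech-complex}.

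Next I would identify $\check{C}(S',T',A')$ with $\check{C}(S,T,\lambda x.A_x^{T_x})$ as chain complexes. In degree $n$ both sides unfold to $\prod_{x:S}\prod_{t:T_x} A_x^{T_x^n}$: for $\check{C}(S',T',A')$ this is just the $\Sigma$-to-$\Pi$ rewriting of $\prod_{(x,t):S'}(A'_{(x,t)})^{(T'_{(x,t)})^n}$, while for the target complex it comes from currying $(A_x^{T_x})^{T_x^n} \cong A_x^{T_x^n\times T_x}$ to move the $T_x$ in the coefficient out as a leading index. A routine check shows the \v{C}ech differentials correspond under this identification, since on both sides the boundary acts only on the $T_x^n$-arguments and leaves the extra $t$ untouched.

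For intuition, the same argument can be phrased as an explicit contracting homotopy: given a cocycle $\beta$ with $\beta_x(u,v) \in A_x^{T_x}$, set $\alpha_x(u)(t) := \beta_x(t,u)(t)$; instantiating the cocycle identity $\beta_x(v,w) - \beta_x(u,w) + \beta_x(u,v) = 0$ with $u := t$ and evaluating at $t$ recovers $d_0(\alpha)_x(v,w)(t) = \beta_x(v,w)(t)$, so $d_0(\alpha) = \beta$. The only real obstacle is bookkeeping: keeping the two copies of $T_x$ (the one sitting in the coefficient versus the one indexing the \v{C}ech complex) straight through the currying so that the differentials match on the nose. Once this is done the result is immediate.
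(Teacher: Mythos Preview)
Your proposal is correct, and your explicit contracting homotopy $\alpha_x(u)(t) := \beta_x(t,u)(t)$ is exactly the paper's proof (the paper writes it in uncurried form as $\alpha_x(u,t) = \beta_x(t,u,t)$). Your first argument, reducing to Lemma~\ref{section-exact-cech-complex} via $S' = \Sigma_{x:S}T_x$, is a clean conceptual packaging of the same idea---the extra $T_x$ in the coefficient supplies the missing global section---and unwinds to the identical formula.
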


\begin{proof}
Assume given a cocycle, i.e. $\beta:\prod_{x:S}A_x^{T_x^3}$ such that for all $x:S$ and $u,v,w,t:T_x$ we have that $\beta_x(u,v,t)+\beta_x(v,w,t) = \beta_x(u,w,t)$. We define $\alpha:\prod_{x:S}A_x^{T_x^2}$ by $\alpha_x(u,t) = \beta_x(t,u,t)$. Then for all $x:S$ and $u,v,t:T_x$ we have that $d_0(\alpha)_x(u,v,t) = \beta_x(t,v,t) - \beta_x(t,u,t) = \beta_x(u,v,t)$ so that $\beta$ is a coboundary.
\end{proof}

\begin{lemma}\label{exact-cech-complex-vanishing-cohomology}
Assume a type $S$ and types $T_x$ for $x:S$ such that $\prod_{x:S}\propTrunc{T_x}$ and $A:S\to\mathrm{Ab}$ such that $\check{C}(S,T,A)$ is exact.
Then given $\alpha:\prod_{x:S}\B A_x$ with $\beta:\prod_{x:S} (\alpha(x) = *)^{T_x}$, we can conclude $\alpha = *$.
\end{lemma}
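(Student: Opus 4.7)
The approach is to promote the local trivialisations $\beta$ to a global one by solving a cocycle equation in the Čech complex. First, for each $x:S$ and $u,v:T_x$, both $\beta_x(u)$ and $\beta_x(v)$ are identifications $\alpha(x)=*$, so
\[\gamma_x(u,v) := \beta_x(u)^{-1}\cdot\beta_x(v)\]
is a loop at the basepoint in $\B A_x$, hence an element of $A_x$ via the standard equivalence $\Omega\B A_x \simeq A_x$. This yields $\gamma:\prod_{x:S}A_x^{T_x^2}$, and a telescoping calculation shows $d_1(\gamma)=0$: in the abelian group $A_x$, the expression $\gamma_x(v,w)-\gamma_x(u,w)+\gamma_x(u,v)$ unfolds to $(\beta_x(v)^{-1}\beta_x(w)) - (\beta_x(u)^{-1}\beta_x(w)) + (\beta_x(u)^{-1}\beta_x(v)) = 0$.

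Since $\gamma$ is a cocycle and $\check{C}(S,T,A)$ is exact by hypothesis, there merely exists $\delta:\prod_{x:S}A_x^{T_x}$ with $d_0(\delta)=\gamma$, i.e.\ $\delta_x(v)-\delta_x(u) = \gamma_x(u,v)$ for all $u,v:T_x$. Given such a $\delta$, I would define the corrected trivialisations
\[\tilde\beta_x(t) := \beta_x(t)\cdot\delta_x(t)^{-1},\]
still an identification $\alpha(x)=*$. A short calculation using the coboundary relation and commutativity of $A_x$ gives $\tilde\beta_x(u) = \tilde\beta_x(v)$ for all $u,v:T_x$, so $\tilde\beta_x:T_x\to(\alpha(x)=*)$ is constant. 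Since its codomain is a set ($\B A_x$ is a $1$-type) and $T_x$ is merely inhabited, $\tilde\beta_x$ factors through $\propTrunc{T_x}$ to give a canonical $\tilde\beta_x:\alpha(x)=*$. Taking the product over $x:S$ yields $\alpha=*$.

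The main obstacle is the mere existence of $\delta$: distinct valid choices differ by an element of $\ker(d_0)$, namely a family $\eta_x \in A_x$ constant in $t$, and this discrepancy transports to $\tilde\beta_x$ via the torsor action. Thus the constructed trivialisation genuinely depends on the choice of $\delta$, and the proof really produces $\propTrunc{\alpha=*}$ rather than a canonical path. This is the natural reading in the cohomological context to follow, where showing that a cohomology class is trivial amounts precisely to mere existence of a nullhomotopy.
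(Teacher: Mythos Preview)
Your proof is essentially identical to the paper's: the paper defines the same cocycle $g_x(u,v)=\beta_x(u)^{-1}\cdot\beta_x(v)$, applies exactness to obtain a cochain $f$ with $d_0(f)=g$, and corrects $\beta$ by $f^{-1}$ to get a family constant on each $T_x$, which then factors through $S$. Your closing remark about the mere existence of $\delta$ is a valid subtlety that the paper glosses over; as you correctly observe, the downstream use (vanishing of $H^1$) only needs $\propTrunc{\alpha=*}$, so no harm results.
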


\begin{proof}
We define $g : \prod_{x:S} A_x^{T_x^2}$ by $g_x(u,v) = \beta_x(u)^{-1}\cdot\beta_x(v)$.
It is a cocycle in the \v{C}ech complex, so that by exactness there is $f:\prod_{x:S}A_x^{T_x}$ such that for all $x:S$ and $u,v:T_x$ we have that $g_x(u,v)=f_x(u)^{-1}\cdot f_x(v)$.
Then we define $\beta' : \prod_{x:S}(\alpha(x)=*)^{T_x}$ by $\beta'_x(u) = \beta_x(u)\cdot f_x(u)^{-1}$
so that for all $x:S$ and $u,v:T_x$ we have that $\beta'_x(u) = \beta'_x(v)$ is equivalent to $f_x(u)^{-1}\cdot f_x(v) = \beta_x(u)^{-1}\cdot\beta_x(v)$, which holds by definition. Therefore $\beta'$ factors through $S$, giving a proof of $\alpha = *$.
\end{proof}

\subsection{Cohomology of Stone spaces}

%
%
%
\begin{lemma}\label{finite-approximation-surjection-stone}
Assume given $S:\Stone$ and $T:S\to\Stone$ such that $\prod_{x:S}\propTrunc{T(x)}$.
Then there exists a sequence of finite types $(S_k)_{k:\N}$ with limit $S$ 
and a compatible sequence of families of finite types $T_k$ over $S_k$
with $\prod_{x:S_k}\propTrunc{T_k(x)}$ and 
$\mathrm{lim}_k\left(\sum_{x:S_k}T_k(x)\right) = \sum_{x:S}T(x)$. 
%
\end{lemma}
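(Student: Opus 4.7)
The plan is to realize the desired finite approximations by viewing the data as a single surjection between Stone spaces and then applying the profinite-approximation results already established in the paper.

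First, I would observe that $\Sigma_{x:S}T(x)$ is Stone. This is immediate from \Cref{stone-sigma-closed}. Moreover, the first projection $\pi:\Sigma_{x:S}T(x)\to S$ is a surjection: for every $x:S$ we have $\propTrunc{T(x)}$, which means the fiber over $x$ is merely inhabited. So $\pi$ is a surjection between Stone spaces.

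By \Cref{ProFiniteMapsFactorization}, $\pi$ is merely a sequential limit of surjections $\pi_k:E_k\twoheadrightarrow S_k$ between finite sets, with $\mathrm{lim}_k S_k = S$ and $\mathrm{lim}_k E_k = \Sigma_{x:S}T(x)$, and with commuting squares connecting the $\pi_k$. For each $k$, I would then define
\[
T_k(x) \;:=\; \pi_k^{-1}(x) \qquad (x:S_k).
\]
Each $T_k(x)$ is a decidable subtype of the finite set $E_k$, hence finite, and it is (merely) inhabited because $\pi_k$ is surjective. The transition maps $E_{k+1}\to E_k$ lying over $S_{k+1}\to S_k$ restrict on fibers to give the compatibility maps $T_{k+1}(x')\to T_k(x)$ whenever $x'$ lies over $x$, so the $T_k$ form a compatible sequence of families of finite types.

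Finally I would check $\mathrm{lim}_k\left(\Sigma_{x:S_k}T_k(x)\right) = \Sigma_{x:S}T(x)$. By construction $\Sigma_{x:S_k}T_k(x)\simeq E_k$ canonically, and these equivalences commute with the transition maps, so the left-hand side is just $\mathrm{lim}_k E_k$, which equals $\Sigma_{x:S}T(x)$ by choice of the approximation. The only point requiring care is that the identification of the sigma as a fibered sum is preserved under passing to the limit of the diagrams, but this is immediate once we have defined $T_k$ as the fibers of $\pi_k$ and used that the transition squares commute; there is no genuine obstacle, as all the work is packaged inside \Cref{stone-sigma-closed} and \Cref{ProFiniteMapsFactorization}.
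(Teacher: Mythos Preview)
Your proposal is correct and is essentially an unpacking of the paper's own proof, which simply says ``This follows from \Cref{ProFiniteMapsFactorization} and \Cref{stone-sigma-closed}.'' You have spelled out precisely how those two ingredients combine: use \Cref{stone-sigma-closed} to make the total space Stone, observe that the projection is a surjection of Stone spaces, invoke \Cref{ProFiniteMapsFactorization} to write it as a limit of surjections of finite sets, and take fibers to obtain the $T_k$.
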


\begin{proof}
  This follows from \Cref{ProFiniteMapsFactorization} and \Cref{stone-sigma-closed}.
\end{proof}

\begin{lemma}\label{cech-complex-vanishing-stone}
Assume given $S:\Stone$ with $T:S\to\Stone$ such that $\prod_{x:S}\propTrunc{T_x}$. Then we have that $\check{C}(S,T,\Z)$ is exact.
\end{lemma}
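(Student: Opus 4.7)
The plan is to reduce exactness of $\check{C}(S,T,\Z)$ to the finite case via the approximation supplied by \Cref{finite-approximation-surjection-stone}, and then exploit the fact that sequential colimits of exact complexes of abelian groups are exact.

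First I would invoke \Cref{finite-approximation-surjection-stone} to present $S$ as the sequential limit of finite types $S_k$, with compatible families $T_k$ of finite types over $S_k$ satisfying $\prod_{x:S_k}\propTrunc{T_k(x)}$, and such that $\sum_{x:S}T_x = \mathrm{lim}_k \sum_{x:S_k}T_k(x)$. Since each $S_k$ is a finite set and each fiber $T_k(x)$ is merely inhabited and finite (in particular discrete), we can pick an actual section $t_k:\prod_{x:S_k}T_k(x)$ by finite choice. \Cref{section-exact-cech-complex} then gives that $\check{C}(S_k,T_k,\Z)$ is exact for every $k$.

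Next I would show that $\check{C}(S,T,\Z)$ is the sequential colimit of the complexes $\check{C}(S_k,T_k,\Z)$. The key observation is that since $\Z$ is constant in $x$, each cochain group has the form $\prod_{x:S}\Z^{T_x^n} = \Z^{\sum_{x:S}T_x^n}$. The fiber power $\sum_{x:S}T_x^n$ is an iterated fiber product over $S$, so it is a sequential limit of the corresponding $\sum_{x:S_k}T_k(x)^n$ (limits commute with limits). \Cref{scott-continuity} then gives that $\Z^{\sum_{x:S}T_x^n}$ is the sequential colimit of $\Z^{\sum_{x:S_k}T_k(x)^n}$. The boundary maps are natural in $k$, so the whole complex $\check{C}(S,T,\Z)$ is the colimit.

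Finally, since sequential colimits of abelian groups are filtered and hence preserve exactness, exactness of each $\check{C}(S_k,T_k,\Z)$ transfers to exactness of $\check{C}(S,T,\Z)$. The main obstacle I anticipate is the bookkeeping needed to identify $\sum_{x:S}T_x^n$ with the limit of $\sum_{x:S_k}T_k(x)^n$ cleanly enough to apply \Cref{scott-continuity} levelwise and check that the boundary maps commute with the colimit structure; once these identifications are in place, the argument is essentially formal.
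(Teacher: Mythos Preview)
Your proposal is correct and follows essentially the same route as the paper: apply \Cref{finite-approximation-surjection-stone}, use \Cref{section-exact-cech-complex} to get exactness at each finite stage, identify $\check{C}(S,T,\Z)$ as the sequential colimit of the $\check{C}(S_k,T_k,\Z)$ via \Cref{scott-continuity}, and conclude by stability of exactness under sequential colimits. You spell out a few steps (finite choice to obtain the section, and the identification $\prod_{x:S}\Z^{T_x^n}\simeq \Z^{\sum_{x:S}T_x^n}$ together with the limit description of $\sum_{x:S}T_x^n$) that the paper leaves implicit, but the argument is the same.
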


\begin{proof}
We apply \cref{finite-approximation-surjection-stone} to get $S_k$ and $T_k$ finite. Then by \cref{scott-continuity} we have that $\check{C}(S,T,\Z)$ is the sequential colimit of the $\check{C}(S_k,T_k,\Z)$. By \cref{section-exact-cech-complex} we have that each of the $\check{C}(S_k,T_k,\Z)$ is exact, and a sequential colimit of exact sequences is exact.
\end{proof}

\begin{lemma}\label{eilenberg-stone-vanish}
Given $S:\Stone$, we have that $H^1(S,\Z) = 0$. 
\end{lemma}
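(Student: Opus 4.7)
The plan is to show that every $\alpha : S \to \B\Z$ merely equals the constant map at $*$, which forces the set truncation $H^1(S,\Z)$ to be the trivial group. The tool is \Cref{exact-cech-complex-vanishing-cohomology}: it suffices to exhibit a family $(T_x)_{x:S}$ of merely inhabited types equipped with $\beta : \prod_{x:S}(\alpha(x)=*)^{T_x}$, such that \Cref{cech-complex-vanishing-stone} applies to give exactness of $\check{C}(S,T,\Z)$. To invoke \Cref{cech-complex-vanishing-stone} I will need each $T_x$ to be Stone.

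Since $\B\Z$ is connected, $\prod_{x:S}\propTrunc{\alpha(x) = *}$ holds. Applying \Cref{AxLocalChoice} to the type family $P(x) := (\alpha(x) = *)$, I merely obtain a Stone space $T$, a surjection $q : T \twoheadrightarrow S$, and $\gamma : \prod_{t:T}(\alpha(q(t)) = *)$. Define $T_x := \sum_{t:T}(q(t)=x)$. Equality in the Stone space $S$ is closed by \Cref{StoneEqualityClosed}, hence Stone by \Cref{PropositionsClosedIffStone}, so each $T_x$ is a $\Sigma$-type of Stone spaces over a Stone space, and therefore Stone by \Cref{stone-sigma-closed}. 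Surjectivity of $q$ gives $\prod_{x:S}\propTrunc{T_x}$.

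Define $\beta_x(t,p) := p_{*}(\gamma(t))$ for $(t,p):T_x$, which lies in $\alpha(x)=*$ by transport along $p$. By \Cref{cech-complex-vanishing-stone}, the complex $\check{C}(S,T,\Z)$ is exact, so \Cref{exact-cech-complex-vanishing-cohomology} yields $\alpha = *$. Since this construction uses only the merely existing data provided by local choice, we conclude $\propTrunc{\alpha = *}$, and hence $H^1(S,\Z) = 0$. The main obstacle in this plan is ensuring that the fibers $T_x$ are themselves Stone (rather than merely compact Hausdorff), so that \Cref{cech-complex-vanishing-stone} applies; this is exactly where the highly non-trivial \Cref{stone-sigma-closed} becomes indispensable, as otherwise the Čech complex machinery developed for Stone covers would not be available.
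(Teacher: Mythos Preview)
Your proof is correct and follows essentially the same approach as the paper: use local choice to produce a Stone-indexed family $T:S\to\Stone$ trivializing $\alpha$, then apply \Cref{cech-complex-vanishing-stone} and \Cref{exact-cech-complex-vanishing-cohomology}. One minor remark: to see that each fiber $T_x=\sum_{t:T}(q(t)=x)$ is Stone you do not need the full strength of \Cref{stone-sigma-closed}; since $q(t)=x$ is closed, $T_x$ is a closed subset of the Stone space $T$, and \Cref{ClosedInStoneIsStone} already gives the conclusion (so your claim that \Cref{stone-sigma-closed} is indispensable at this particular step is a slight overstatement, though it is genuinely used inside the proof of \Cref{cech-complex-vanishing-stone}).
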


\begin{proof}
Assume given a map $\alpha:S\to \B\Z$. We use local choice to get $T:S\to\Stone$ such that $\prod_{x:S}\propTrunc{T_x}$ with $\beta:\prod_{x:S}(\alpha(x)=*)^{T_x}$. Then we conclude by \cref{cech-complex-vanishing-stone} and \cref{exact-cech-complex-vanishing-cohomology}.
\end{proof}

\begin{corollary}\label{stone-commute-delooping}
For any $S:\Stone$ the canonical map $\B(\Z^S) \to (\B\Z)^S$ is an equivalence.
\end{corollary}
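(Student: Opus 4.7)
The plan is to apply the standard principle that a pointed map between pointed connected $1$-types is an equivalence as soon as it induces an equivalence on the loop space of the basepoint. Both $\B(\Z^S)$ and $(\B\Z)^S$ are pointed $1$-types: $\B\Z$ is a $1$-type because $\Z$ is a set, and function types preserve $n$-truncatedness; similarly $\Z^S$ is a set so $\B(\Z^S)$ is a $1$-type. The canonical map $\B(\Z^S) \to (\B\Z)^S$ is pointed by construction.

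Next I would verify connectedness. The delooping $\B(\Z^S)$ is connected by definition. For $(\B\Z)^S$, pointed at the constant map $\ast$ at the basepoint of $\B\Z$, connectedness amounts to the statement that every $\alpha : S \to \B\Z$ merely equals $\ast$; since $(\B\Z)^S$ is a $1$-type, its set truncation computes $H^1(S,\Z)$, and the vanishing provided by \Cref{eilenberg-stone-vanish} is exactly what is needed.

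Finally I would check that the canonical map is an equivalence on the loop space of the basepoint. On the left, the loop space of $\B(\Z^S)$ at its basepoint is $\Z^S$ by definition of the delooping. On the right, since identity types of function types are pointwise identity types, the loop space of $(\B\Z)^S$ at the constant map $\ast$ is $\prod_{x:S}(\ast =_{\B\Z}\ast) = \Z^S$. The canonical map identifies these two presentations of $\Z^S$. Since a pointed map between pointed connected $1$-types that induces an equivalence on $\pi_1$ is an equivalence (higher loop spaces vanish in a $1$-type), we conclude. I do not anticipate a substantive obstacle here: all the real content sits in \Cref{eilenberg-stone-vanish}, and the corollary is a formal application of Whitehead's principle for $1$-types.
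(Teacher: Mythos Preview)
Your proposal is correct and is exactly the intended derivation: the paper states the corollary without proof immediately after \Cref{eilenberg-stone-vanish}, and the argument you sketch---both sides are pointed connected $1$-types (connectedness of $(\B\Z)^S$ being precisely $H^1(S,\Z)=0$), the canonical map induces the identity on $\Omega\simeq \Z^S$, and one concludes by the truncated Whitehead principle---is the standard way to unpack it.
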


\subsection{\v{C}ech cohomology of compact Hausdorff spaces}

\begin{definition}
A \v{C}ech cover consists of $X:\CHaus$ and $S:X\to\Stone$ such that $\prod_{x:X}\propTrunc{S_x}$ and $\sum_{x:X}S_x:\Stone$.
\end{definition}

By definition any compact Hausdorff type has a \v{C}ech cover.

\begin{lemma}\label{cech-eilenberg-0-agree}
Given a \v{C}ech cover $(X,S)$, we have that $H^0(X,\Z) = \check{H}^0(X,S,\Z)$.
\end{lemma}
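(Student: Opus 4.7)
The plan is to directly unfold both sides and exhibit the natural isomorphism. Since $\Z$ is a set, by the HoTT convention $H^0(X,\Z)$ is just the set of functions $X\to\Z$ (the set truncation of $X\to\Z$ is $X\to\Z$ itself). On the other hand $\check{H}^0(X,S,\Z)$ is by definition $\ker(d_0)$, i.e. the set of $\alpha:\prod_{x:X}\Z^{S_x}$ such that $\alpha_x(v)-\alpha_x(u)=0$ for all $x:X$ and $u,v:S_x$, which is to say that each $\alpha_x:S_x\to\Z$ is constant.

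First I would define the map $\Phi:(X\to\Z)\to\check{H}^0(X,S,\Z)$ by $\Phi(f)_x(u):=f(x)$; this trivially lies in $\ker(d_0)$ since $f(x)-f(x)=0$.

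Next I would define the inverse $\Psi$. Given $\alpha\in\ker(d_0)$ and $x:X$, the function $\alpha_x:S_x\to\Z$ is constant, hence factors uniquely through $\propTrunc{S_x}$ because $\Z$ is a set (this is the standard recursion principle for propositional truncation into a set applied to a constant family). Using the witness $\prod_{x:X}\propTrunc{S_x}$ supplied by the \v{C}ech cover, we obtain a well-defined $\Psi(\alpha)(x):\Z$, namely the common value of $\alpha_x$.

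Finally I would check $\Phi\circ\Psi=\id$ and $\Psi\circ\Phi=\id$: the first holds because $\Phi(\Psi(\alpha))_x(u)=\Psi(\alpha)(x)$ equals the constant value $\alpha_x(u)$, and the second is immediate. Both sides carry the pointwise abelian group structure inherited from $\Z$, and $\Phi$ is manifestly a homomorphism, so this is a group isomorphism. The only subtle point, and thus the only ``obstacle,'' is the extraction of the constant value from $\alpha_x:S_x\to\Z$ using mere inhabitation of $S_x$; but this is routine given that $\Z$ is a set.
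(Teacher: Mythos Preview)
Your proposal is correct and follows essentially the same approach as the paper: the paper's proof simply observes that an element of $\check{H}^0(X,S,\Z)$ is a family $f:\prod_{x:X}\Z^{S_x}$ with each $f_x$ constant, and then states that since $\Z$ is a set and each $S_x$ is merely inhabited this is equivalent to $\Z^X$. You have spelled out exactly this equivalence by exhibiting the maps $\Phi$ and $\Psi$ and checking they are mutually inverse, which is just a more detailed version of the same argument.
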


\begin{proof}
By definition an element in $\check{H}^0(X,S,\Z)$ is a map $f:\prod_{x:X}\Z^{S_x}$
such that for all $u,v:S_x$ we have $f(u)=f(v)$. Since $\Z$ is a set and the $S_x$ are merely inhabited, this is equivalent to $\Z^X$.
\end{proof}

\begin{lemma}\label{eilenberg-exact}
Given a \v{C}ech cover $(X,S)$ we have an exact sequence:
\[H^0(X,\lambda x.\Z^{S_x}) \to H^0(X,\lambda x.\Z^{S_x}/\Z) \to H^1(X,\Z)\to 0\]
\end{lemma}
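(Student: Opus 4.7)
The plan is to apply the long exact sequence in cohomology associated to the short exact sequence of families of abelian groups over $X$
\[
0 \to \Z \to \lambda x.\,\Z^{S_x} \to \lambda x.\,\Z^{S_x}/\Z \to 0,
\]
where $\Z \to \Z^{S_x}$ is the inclusion of constants; this is injective because each $S_x$ is merely inhabited. In HoTT this short exact sequence gives a fiber sequence of deloopings $\B\Z \to \B(\Z^{S_x}) \to \B(\Z^{S_x}/\Z)$, and applying $\prod_{x:X}$ to the induced long fiber sequence, then set-truncating, produces the long exact sequence
\[
\cdots \to H^0(X,\lambda x.\Z^{S_x}) \to H^0(X,\lambda x.\Z^{S_x}/\Z) \to H^1(X,\Z) \to H^1(X,\lambda x.\Z^{S_x}) \to \cdots.
\]
The claimed three-term sequence then follows, provided $H^1(X,\lambda x.\Z^{S_x}) = 0$.

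For this vanishing, I would invoke \Cref{stone-commute-delooping} pointwise to get $\B(\Z^{S_x}) \simeq (\B\Z)^{S_x}$ for each $x:X$, so that
\[
\prod_{x:X} \B(\Z^{S_x}) \;\simeq\; \prod_{x:X}(\B\Z)^{S_x} \;\simeq\; \Bigl(\sum_{x:X} S_x\Bigr) \to \B\Z
\]
by currying. Set-truncating and using that $\sum_{x:X} S_x$ is Stone by the definition of \v{C}ech cover, \Cref{eilenberg-stone-vanish} yields $H^1(X,\lambda x.\Z^{S_x}) \simeq H^1(\sum_x S_x,\Z) = 0$.

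The main obstacle I anticipate is justifying the long exact sequence itself, since the paper has not developed one explicitly for families of abelian groups. Should a direct argument be preferable, I would define the connecting map $\partial\colon H^0(X,\lambda x.\Z^{S_x}/\Z) \to H^1(X,\Z)$ by $\overline{f} \mapsto [x \mapsto P_x]$, where $P_x \subseteq \Z^{S_x}$ is the fiber of the quotient over $\overline{f}(x)$, a $\Z$-torsor thanks to $\propTrunc{S_x}$. Exactness at $H^0(X,\lambda x.\Z^{S_x}/\Z)$ then reduces to the fact that $P_x$ is trivializable as a torsor iff it is inhabited, iff $\overline{f}(x)$ admits a lift to $\Z^{S_x}$. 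For surjectivity, given $\alpha\colon X\to\B\Z$, I would trivialize the composite $\sum_x S_x \to X \to \B\Z$ via \Cref{eilenberg-stone-vanish} to obtain (merely) $\gamma\colon\prod_{x:X}(\alpha(x)=*)^{S_x}$, then define $\overline{f}(x)\in\Z^{S_x}/\Z$ from $\gamma_x$ using the free transitive $\Z$-action on the torsor $(\alpha(x)=*)$, and finally verify $\partial(\overline{f}) = [\alpha]$.
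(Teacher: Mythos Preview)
Your proposal is correct and follows essentially the same route as the paper: invoke the long exact sequence for the short exact sequence $0 \to \Z \to \Z^{S_x} \to \Z^{S_x}/\Z \to 0$, and then kill the term $H^1(X,\lambda x.\Z^{S_x})$ via \Cref{stone-commute-delooping} and \Cref{eilenberg-stone-vanish} applied to the Stone space $\sum_{x:X}S_x$. Your added detail (the currying step, and the direct fallback construction of the connecting map and surjectivity) goes beyond what the paper spells out, but the paper likewise takes the long exact sequence for granted rather than deriving it.
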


\begin{proof}
We use the long exact cohomology sequence associated to:
\[0 \to \Z \to \Z^{S_x} \to \Z^{S_x}/\Z\to 0\]
We just need $H^1(X,\lambda x.\Z^{S_x}) = 0$ to conclude. But by \cref{stone-commute-delooping} we have that $H^1(X,\lambda x.\Z^{S_x}) = H^1\left(\sum_{x:X}S_x,\Z\right)$ which vanishes by \cref{eilenberg-stone-vanish}.
\end{proof}

\begin{lemma}\label{cech-exact}
Given a \v{C}ech cover $(X,S)$ we have an exact sequence:
\[\check{H}^0(X,\lambda x.\Z^{S_x}) \to \check{H}^0(X,\lambda x.\Z^{S_x}/\Z) \to \check{H}^1(X,\Z)\to 0\]
\end{lemma}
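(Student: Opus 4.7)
My plan is to parallel \Cref{eilenberg-exact}: derive a long exact sequence in \v{C}ech cohomology from the pointwise short exact sequence $0 \to \Z \to \Z^{S_x} \to \Z^{S_x}/\Z \to 0$. Unlike the ordinary cohomology case, I expect a standard snake-lemma argument to be problematic, since the induced sequence of \v{C}ech complexes need not be degreewise surjective on the right (lifting cochains valued in $\Z^{S_x}/\Z$ to ones valued in $\Z^{S_x}$ would require choosing representatives of cosets over the non-discrete Stone spaces $\sum_{x:X}S_x^{k+1}$). Instead, I will construct the connecting homomorphism directly.

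Since each $S_x$ is merely inhabited, any element of $\ker(d_0) \subseteq \check{C}^0(X, S, A)$ has constant components, yielding the identification $\check{H}^0(X, \lambda x.A_x) \cong \prod_{x:X}A_x$. Thus the first map in the target sequence becomes the pointwise quotient $\prod_{x:X} \Z^{S_x} \to \prod_{x:X} \Z^{S_x}/\Z$. For the connecting map $\delta : \check{H}^0(X, \lambda x.\Z^{S_x}/\Z) \to \check{H}^1(X, \Z)$, given $\bar f = (\bar f_x)_{x:X} \in \prod_{x:X} \Z^{S_x}/\Z$, I set $\beta_x(u,v) := h(v) - h(u) \in \Z$ where $h : S_x \to \Z$ is any representative of the coset $\bar f_x$. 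Two representatives differ by a constant, so $\beta_x(u,v)$ is independent of $h$, and mere existence of $h$ (from surjectivity of the quotient map) suffices to extract it via the universal property of propositional truncation into a set. A direct check shows $\beta \in \prod_{x:X} \Z^{S_x^2}$ is a cocycle, so $\delta(\bar f) := [\beta]$ is well-defined.

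Exactness is then checked by direct computation. If $\bar f$ is the image of $g \in \prod_{x:X}\Z^{S_x}$, taking $h := g_x$ gives $\beta = d_0 g$, so $\delta(\bar f) = 0$. Conversely, if $\delta(\bar f) = 0$ so that $\beta = d_0 \gamma$ for some $\gamma \in \prod_{x:X} \Z^{S_x}$, then for any merely-existing representative $h_x$ of $\bar f_x$, the equality $h_x(v) - h_x(u) = \gamma_x(v) - \gamma_x(u)$ forces $h_x - \gamma_x$ to be constant on $S_x$, so $[\gamma_x] = \bar f_x$. For surjectivity of $\delta$, given $[\beta] \in \check{H}^1(X, \Z)$, I define $\bar f_x$ as the class of $u \mapsto \beta_x(u_0, u)$ for any merely-existing basepoint $u_0 \in S_x$; the cocycle identity makes this class independent of $u_0$, and by construction $\delta(\bar f) = [\beta]$.

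The main technical subtlety, which I expect to be the most delicate step, is the repeated invocation of mere existence in the definitions of $\delta$ and of the preimage $\bar f$: each time, invariance under the choice of representative or basepoint is what allows extraction of a concrete value from a propositional truncation into a set. Once these invariance properties are verified, the exactness checks themselves are routine.
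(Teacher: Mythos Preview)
Your proof is correct, but it takes a different route from the paper's. The paper does exactly what you thought was problematic: it shows that the sequence of \v{C}ech complexes
\[0 \to \check{C}(X,S,\Z) \to \check{C}(X,S,\lambda x.\Z^{S_x}) \to \check{C}(X,S,\lambda x.\Z^{S_x}/\Z)\to 0\]
is degreewise exact, then applies the long exact sequence together with \Cref{canonical-exact-cech-complex}. The degreewise surjectivity on the right that you worried about is obtained from \Cref{eilenberg-stone-vanish}: the degree-$k$ term is a product over the Stone space $\sum_{x:X}S_x^{k+1}$ (Stone by \Cref{stone-sigma-closed}), and the long exact sequence in ordinary cohomology for the coefficient sequence $0\to\Z\to\Z^{S_x}\to\Z^{S_x}/\Z\to 0$ over that Stone base, combined with $H^1(\mathrm{Stone},\Z)=0$, gives the needed lifting. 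So the snake-lemma route is not actually blocked.

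Your direct construction of $\delta$ is nonetheless a valid alternative: the invariance arguments you sketch (representatives of a coset differ by a constant, the class of $u\mapsto\beta_x(u_0,u)$ is independent of $u_0$ by the cocycle identity) all go through, and each truncation-elimination lands in a set as required. The trade-off is that your argument is more elementary and avoids any appeal to \Cref{eilenberg-stone-vanish} or \Cref{canonical-exact-cech-complex}, at the cost of exploiting the very specific shape of the quotient $\Z^{S_x}/\Z$; the paper's argument is more structural and would extend more readily to other coefficient sequences or higher degrees.
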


\begin{proof}
By \cref{eilenberg-stone-vanish} and the long exact sequence for cohomology, we have an exact sequence of complexes:
\[0 \to \check{C}(X,S,\Z) \to \check{C}(X,S,\lambda x.\Z^{S_x}) \to \check{C}(X,S,\lambda x.\Z^{S_x}/\Z)\to 0\]
But since $\check{H}^1(X,\lambda x.\Z^{S_x}) = 0$ by \cref{canonical-exact-cech-complex}, we conclude using the associated long exact sequence.
\end{proof}

\begin{theorem}\label{cech-eilenberg-1-agree}
Given a \v{C}ech cover $(X,S)$, we have that $H^1(X,\Z) = \check{H}^1(X,S,\Z)$
\end{theorem}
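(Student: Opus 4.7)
The plan is to compare the two four-term exact sequences provided by \Cref{eilenberg-exact} and \Cref{cech-exact}, and identify their first two terms using (a generalization of) \Cref{cech-eilenberg-0-agree}. Since both sequences end in $\to 0$, matching the first two terms together with a naturality check will force the final terms to agree.

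First I would generalize \Cref{cech-eilenberg-0-agree} from the constant coefficients $\Z$ to an arbitrary family $A:X\to\mathrm{Ab}$. The argument is the same: an element of $\check{H}^0(X,S,A)$ is a dependent function $f:\prod_{x:X}A_x^{S_x}$ with $f_x(u)=f_x(v)$ for all $u,v:S_x$, and since each $A_x$ is a set and each $S_x$ is merely inhabited by the definition of a \v{C}ech cover, such an $f$ is the same data as an element of $\prod_{x:X}A_x = H^0(X,A)$. Moreover the resulting identification $\check{H}^0(X,S,A)\cong H^0(X,A)$ is manifestly natural in $A$, since on both sides a morphism of coefficient families acts by post-composition.

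Next I would apply this identification to the two families $\lambda x.\Z^{S_x}$ and $\lambda x.\Z^{S_x}/\Z$, obtaining
\[
H^0(X,\lambda x.\Z^{S_x})\cong\check{H}^0(X,S,\lambda x.\Z^{S_x}),\qquad H^0(X,\lambda x.\Z^{S_x}/\Z)\cong\check{H}^0(X,S,\lambda x.\Z^{S_x}/\Z),
\]
and by naturality in the coefficients the connecting map induced by $\Z^{S_x}\twoheadrightarrow \Z^{S_x}/\Z$ agrees on both sides. Thus the first two terms of the exact sequences of \Cref{eilenberg-exact} and \Cref{cech-exact} are identified, together with the map between them.

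Finally, since both exact sequences have the form $M_0\to M_1\to M_2\to 0$, the third term is in each case the cokernel of the first map; as the first map is the same under the above identification, we obtain the desired isomorphism $H^1(X,\Z)\cong\check{H}^1(X,S,\Z)$. The only subtle point, and the main thing to get right, is the naturality of the identification in \Cref{cech-eilenberg-0-agree} with respect to morphisms of coefficient families, so that the comparison square between the two exact sequences actually commutes; once this is established, the conclusion is immediate.
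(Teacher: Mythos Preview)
Your proposal is correct and is essentially the same approach as the paper's own proof, which simply cites \Cref{cech-eilenberg-0-agree}, \Cref{eilenberg-exact}, and \Cref{cech-exact}. You have correctly spelled out the details the paper leaves implicit: that \Cref{cech-eilenberg-0-agree} must be applied with the non-constant coefficient families $\lambda x.\Z^{S_x}$ and $\lambda x.\Z^{S_x}/\Z$ (the same one-line argument works), and that naturality in the coefficients is needed so that both exact sequences share the same first map and hence the same cokernel.
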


\begin{proof}
We apply \cref{cech-eilenberg-0-agree}, \cref{eilenberg-exact} and \cref{cech-exact}.
\end{proof}

This means that \v{C}ech cohomology does not depend on $S$.

\subsection{Cohomology of the interval}
%
%
\begin{remark}\label{description-Cn-simn}
  Recall from \Cref{def-cs-Interval} that 
  there is a binary relation $\sim_n$ on $2^n=:\I_n$ such that 
  $(2^n,\sim_n)$ is equivalent to  $(\mathrm{Fin}(2^n),\lambda x,y.\ |x-y|\leq 1)$
  and for $\alpha,\beta:2^\N$ we have $(cs(\alpha) = cs(\beta)) \leftrightarrow 
  \left(\forall_{n:\N}\alpha|_n \sim_n \beta|_n\right)$. 
\end{remark}

We define $\I_n^{\sim2} = \Sigma_{x,y:\I_n}x\sim_n y$ and $\I_n^{\sim3} = \Sigma_{x,y,z:\I_n}x\sim_n y \land y\sim_n z\land x\sim_n z$.

\begin{lemma}\label{Cn-exact-sequence}
For any $n:\N$ we have an exact sequence:
\[0\to \Z\to \Z^{\I_n} \to \Z^{\I_n^{\sim2}} \to \Z^{\I_n^{\sim3}}\]
with the obvious boundary maps.
\end{lemma}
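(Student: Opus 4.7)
The plan is to exploit the equivalence $(\I_n,\sim_n) \simeq (\mathrm{Fin}(2^n), \lambda x,y.\ |x-y|\leq 1)$ from \Cref{description-Cn-simn}, which presents $\I_n$ as the vertex set of a reflexive path graph on $N:=2^n$ vertices. Under this identification the claim becomes a discrete Poincar\'e lemma: the complex is exact because the path is inhabited, connected, and has no triples of pairwise distinct, pairwise adjacent vertices.

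Exactness at $\Z$ is immediate since $\I_n$ is inhabited, so the constants embed into $\Z^{\I_n}$. For exactness at $\Z^{\I_n}$, a function $f$ in the kernel of $d_0$ satisfies $f(k+1)=f(k)$ for every consecutive pair $k,k+1$ (using reflexivity of $\sim_n$ to handle $y=x$), hence is constant and lies in the image of $\Z\to\Z^{\I_n}$.

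The substantive step is exactness at $\Z^{\I_n^{\sim 2}}$. Given a cocycle $g$, I would first specialize the cocycle identity to degenerate triples: taking $u=v=w=x$ yields $g(x,x)=0$, and taking $u=w=x$, $v=y$ with $x\sim_n y$ yields antisymmetry $g(y,x)=-g(x,y)$. The key combinatorial remark is that in $(\mathrm{Fin}(2^n), |x-y|\leq 1)$ no three pairwise distinct vertices are pairwise adjacent, so \emph{all} nontrivial cocycle constraints reduce to these two. I would then integrate along the path: define $f:\I_n\to\Z$ by $f(0)=0$ and $f(k+1)=f(k)+g(k,k+1)$, and verify $g(x,y)=f(y)-f(x)$ by a case split on whether $y$ equals $x$, $x+1$, or $x-1$, invoking antisymmetry in the last case. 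The hard part is essentially nonexistent—the only real bookkeeping is extracting $g(x,x)=0$ and antisymmetry from the cocycle condition, after which the content reduces to the fact that the nerve of $(\I_n,\sim_n)$ is a $1$-dimensional contractible simplicial complex.
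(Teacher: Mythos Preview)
Your proposal is correct and follows essentially the same route as the paper: both identify $(\I_n,\sim_n)$ with the path graph on $2^n$ vertices via \Cref{description-Cn-simn}, both dispatch exactness at $\Z$ and $\Z^{\I_n}$ by inhabitation and connectedness, and both handle exactness at $\Z^{\I_n^{\sim 2}}$ by integrating along the path, defining $f(k)=g(0,1)+\cdots+g(k-1,k)$ and checking $g(x,y)=f(y)-f(x)$. Your writeup is more explicit than the paper's in isolating the degenerate cocycle identities $g(x,x)=0$ and $g(y,x)=-g(x,y)$ and in observing that the absence of nondegenerate triangles means these are the only constraints needed for the verification; the paper simply asserts ``we can check that $\beta(m,n)=\alpha(n)-\alpha(m)$'' and leaves this unpacking to the reader.
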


\begin{proof}
It is clear that the map $\Z\to \Z^{\I_n}$ is injective as $\I_n$ is inhabited, so the sequence is exact at $\Z$. Assume a cocycle $\alpha:\Z^{\I_n}$, meaning that for all $u,v:\I_n$, if $u\sim_nv$ then $\alpha(u)=\alpha(v)$. Then by \cref{description-Cn-simn} we see that $\alpha$ is constant, so the sequence is exact at $\Z^{\I_n}$.

Assume a cocycle $\beta:\Z^{\I_n^{\sim2}}$, meaning that for all $u,v,w:\I_n$ such that $u\sim_nv$, $v\sim_nw$ and $u\sim_nw$ we have that $\beta(u,v)+\beta(v,w) = \beta(u,w)$. 
Using \cref{description-Cn-simn} we can define $\alpha(n) = \beta(0,1)+\cdots+\beta(n-1,n)$.
We can check that $\beta(m,n) = \alpha(n)-\alpha(m)$, so $\beta$ is indeed a coboundary and the sequence is exact at $\Z^{\I_n^{\sim2}}$.
\end{proof}

\begin{proposition}\label{cohomology-I}
We have that $H^0(\I,\Z) = \Z$ and $H^1(\I,\Z) = 0$.
\end{proposition}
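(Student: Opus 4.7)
The plan is to apply \Cref{cech-eilenberg-1-agree} (and \Cref{cech-eilenberg-0-agree}) to the \v{C}ech cover given by the surjection $cs : 2^\N \twoheadrightarrow \I$ from the proof of \Cref{IntervalIsCHaus}, and then recognise the resulting \v{C}ech complex as a sequential colimit of the finite exact complexes of \Cref{Cn-exact-sequence}, whence exactness passes to the colimit.

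First I would verify that $(\I, cs^{-1})$ is a \v{C}ech cover: $cs$ is surjective, so $\prod_{x:\I}\propTrunc{cs^{-1}(x)}$, and $\sum_{x:\I} cs^{-1}(x) \simeq 2^\N$ which is Stone. The key computational step is to identify, for each $k\geq 1$, the degree-$k$ term $\prod_{x:\I}\Z^{cs^{-1}(x)^k} \simeq \Z^{\sum_{x:\I} cs^{-1}(x)^k}$ of $\check{C}(\I, cs^{-1}, \Z)$. The indexing set unfolds to $\{(\alpha_1,\ldots,\alpha_k)\in (2^\N)^k \mid cs(\alpha_1)=\cdots=cs(\alpha_k)\}$, which by \Cref{description-Cn-simn} is the sequential limit of the finite sets $\I_n^{\sim k}$ (taking $\I_n^{\sim 1} := \I_n$). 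Next I would extend \Cref{scott-continuity} from $\N$ to $\Z$ by a direct variant of \Cref{MapsStoneToNareBounded} applied to $|f|$, so that any $f:S\to\Z$ from a Stone space factors through a finite subset of $\Z$; consequently $\Z^{\lim_n \I_n^{\sim k}}$ is the sequential colimit of $\Z^{\I_n^{\sim k}}$. Since the \v{C}ech boundary maps are defined pointwise and are compatible with the restrictions $\I_{n+1}^{\sim k}\to \I_n^{\sim k}$, the whole complex $\check{C}(\I, cs^{-1}, \Z)$ is exhibited as the sequential colimit of the complexes $C_n = \bigl(0 \to \Z \to \Z^{\I_n} \to \Z^{\I_n^{\sim 2}} \to \Z^{\I_n^{\sim 3}}\bigr)$ of \Cref{Cn-exact-sequence}.

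Each $C_n$ is exact by \Cref{Cn-exact-sequence}, and sequential (hence filtered) colimits of abelian groups preserve exact sequences, so in the colimit one has $\ker d_0 \simeq \Z$ and $\ker d_1 = \mathrm{im}\, d_0$; that is, $\check{H}^0(\I, cs^{-1},\Z) = \Z$ and $\check{H}^1(\I, cs^{-1},\Z) = 0$. Applying \Cref{cech-eilenberg-0-agree} and \Cref{cech-eilenberg-1-agree} then gives $H^0(\I,\Z) = \Z$ and $H^1(\I,\Z) = 0$. The main (mild) obstacle is the commutation of $\Z^{(-)}$ with the sequential limits defining the terms of the \v{C}ech complex: once one sees that $\Z$-valued functions on a Stone limit of finite sets are the filtered colimit of $\Z$-valued functions on the finite stages, the entire argument reduces to the already-established exactness of the finite complex \Cref{Cn-exact-sequence}.
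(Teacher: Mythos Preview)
Your proposal is correct and follows essentially the same route as the paper: use the \v{C}ech cover coming from $cs$, identify $\sum_{x:\I}T_x^l$ with $\lim_n \I_n^{\sim l}$, pass to the colimit of the exact finite complexes of \Cref{Cn-exact-sequence} via \Cref{scott-continuity}, and conclude by \Cref{cech-eilenberg-0-agree} and \Cref{cech-eilenberg-1-agree}. You are in fact slightly more explicit than the paper in two places: you note that each fiber $cs^{-1}(x)$ is Stone (needed for the definition of \v{C}ech cover), and you observe that \Cref{scott-continuity} as stated is for $\N$ and must be extended to $\Z$ via boundedness of $|f|$---the paper simply cites \Cref{scott-continuity} without comment.
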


\begin{proof}
Consider $cs:2^\N\to\I$ and the associated \v{C}ech cover $T$ of $\I$ defined by: 
\[T_x = \Sigma_{y:2^\N} (x=_\I cs(y))\]
Then for $l=2,3$ we have that $\mathrm{lim}_n\I_n^{\sim l} = \sum_{x:\I} T_x^l$. By \cref{Cn-exact-sequence} and stability of exactness under sequential colimit, we have an exact sequence:
\[ 0\to \Z\to \mathrm{colim}_n \Z^{\I_n} \to \mathrm{colim}_n \Z^{\I_n^{\sim2}}\to \mathrm{colim}_n \Z^{\I_n^{\sim3}}\]
By \cref{scott-continuity} this sequence is equivalent to:
\[ 0\to \Z\to \Pi_{x:\I}\Z^{T_x} \to  \Pi_{x:\mathbb{I}}\Z^{T_x^2} \to  \Pi_{x:\mathbb{I}}\Z^{T_x^3}\]
So it being exact implies that $\check{H}^0(\I,T,\Z) = \Z$ and $\check{H}^1(\I,T,\Z) = 0$.
We conclude by \cref{cech-eilenberg-0-agree} and \cref{cech-eilenberg-1-agree}.
\end{proof}

\begin{remark}
We could carry a similar computation for $\mathbb{S}^1$, by approximating it with $2^n$ with $0^n\sim_n1^n$ added. We would find $H^1(\mathbb{S}^1,\Z)=\Z$.
\end{remark}

\subsection{Brouwer's fixed-point theorem}

Here we consider the modality defined by localising at $\I$ \cite{modalities}, denoted by $L_\I$. We say that $X$ is $\I$-local if $L_\I(X) = X$ and that it is $\I$-contractible if $L_\I(X)=1$.

\begin{lemma}\label{Z-I-local}
$\Z$ and $2$ are $\I$-local.
\end{lemma}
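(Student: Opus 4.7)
The plan is to show, for each of $\Z$ and $2$, that the constant-function inclusion $X \to X^\I$ is an equivalence; since $\I$ is inhabited (e.g.\ by $cs(\lambda n.\,0)$) and $X$ is a set, this inclusion is automatically an embedding, so it suffices to check surjectivity, i.e.\ that every map $\I \to X$ is constant.

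For $\Z$ this is essentially a restatement of \Cref{cohomology-I}. By \Cref{cech-eilenberg-0-agree} (and in particular its proof) the group $H^0(\I,\Z)$ is identified with $\Z^\I$ via the constant-function inclusion $\Z \hookrightarrow \Z^\I$, and we already computed $H^0(\I,\Z) = \Z$. Hence $\Z \to \Z^\I$ is an equivalence, so $\Z$ is $\I$-local.

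For $2$ I would reduce to the $\Z$ case along the inclusion $\iota : 2 \hookrightarrow \Z$. Given any $f : \I \to 2$, the composite $\iota \circ f : \I \to \Z$ is constant with some value $n : \Z$ by the previous paragraph. Since $f$ factors through $\{0,1\}$, we get $n \in \{0,1\}$, so $f$ itself is the constant function at $n$. This proves surjectivity of $2 \to 2^\I$ and hence that $2$ is $\I$-local. The real work has already been carried out in the cohomology computation $H^0(\I,\Z) = \Z$, so I do not anticipate any further obstacle.
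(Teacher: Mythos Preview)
Your argument is correct and essentially the same as the paper's: for $\Z$ you both invoke $H^0(\I,\Z)=\Z$ from \Cref{cohomology-I} to conclude $\Z\to\Z^\I$ is an equivalence, and for $2$ you both reduce to the $\Z$ case. The only cosmetic difference is that the paper phrases the $2$ step as ``$2$ is a retract of $\Z$'' (invoking closure of $\I$-local types under retracts), whereas you unfold this by composing with the embedding $2\hookrightarrow\Z$ and reading off that the constant value lies in $\{0,1\}$; these amount to the same thing.
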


\begin{proof}
By \cref{cohomology-I}, from $H^0(\I,\Z)=\Z$ we get that the map $\Z\to \Z^\I$ is an equivalence, so $\Z$ is $\I$-local. We see that $2$ is $\I$-local as it is a retract of $\Z$.
\end{proof}

\begin{remark}
Since $2$ is $\I$-local, we know by duality that any Stone space is $\I$-local.
\end{remark}

\begin{lemma}\label{BZ-I-local}
$\B\Z$ is $\I$-local.
\end{lemma}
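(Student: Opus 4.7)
The goal is to show that the canonical constant map $c : \B\Z \to (\B\Z)^\I$, sending $x$ to $\lambda t.\,x$, is an equivalence, which is exactly what it means for $\B\Z$ to be $\I$-local. A map of types is an equivalence iff it is both surjective and an embedding, so I plan to treat these two conditions in turn, each reducing to results already established.

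For surjectivity of $c$, given any $f : \I \to \B\Z$ I must show it is merely equal to some constant map. The set truncation of $\I \to \B\Z$ is by definition $H^1(\I, \Z)$, which is trivial by \Cref{cohomology-I}. Since $\B\Z$ is pointed and connected, every map is therefore merely equal to the constant at the basepoint, and so $c$ is surjective onto $(\B\Z)^\I$.

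For the embedding property, fix $x, y : \B\Z$ and consider the induced map $(x = y) \to (c(x) = c(y))$. By function extensionality, the target unfolds to $(x = y)^\I$ and the map becomes the constant map $(x=y) \to (x=y)^\I$, so we need $(x = y)$ to be $\I$-local. By connectedness of $\B\Z$, the type $(x = y)$ is merely equivalent to $\Omega \B\Z \simeq \Z$. Since $\I$-locality is a mere proposition preserved under equivalence, and $\Z$ is $\I$-local by \Cref{Z-I-local}, it follows that $(x = y)$ is $\I$-local.

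The main subtlety is in the surjectivity step: identifying the set truncation of $\I \to \B\Z$ with $H^1(\I, \Z)$ and then using the vanishing of $H^1(\I,\Z)$ together with the pointedness and connectedness of $\B\Z$ to pick out a constant representative. Both ingredients are part of the standard synthetic theory of deloopings \cite{hott, davidw23}, so the argument reduces cleanly to facts already available in the paper.
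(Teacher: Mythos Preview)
Your proof is correct and follows essentially the same approach as the paper's. The paper phrases the argument in terms of factorizations of maps $\I\to\B\Z$ through $1$ (uniqueness from the identity types being $\Z$-torsors and hence $\I$-local by \Cref{Z-I-local}, mere existence from $H^1(\I,\Z)=0$), whereas you phrase it as showing the constant map $c:\B\Z\to(\B\Z)^\I$ is a surjective embedding; these are the same two ingredients in slightly different packaging.
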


\begin{proof}
Any identity type in $\B\Z$ is a $\Z$-torsor, so it is $\I$-local by \cref{Z-I-local}. So there is at most one factorisation of any map $\I\to \B\Z$ through $1$. From $H^1(\I,\Z)=0$ we get that there merely exists such a factorisation.
\end{proof}

\begin{lemma}\label{continuously-path-connected-contractible}
Assume $X$ a type with $x:X$ such that for all $y:X$ we have $f:\I\to X$ such that $f(0)=x$ and $f(1)=y$. Then $X$ is $\I$-contractible.
\end{lemma}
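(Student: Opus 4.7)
The goal is to show that the unit map $\eta : X \to L_\I(X)$ factors through $1$, equivalently that $L_\I(X)$ is a contractible type. Since we already have the point $\eta(x) : L_\I(X)$, it suffices to prove $\prod_{z : L_\I(X)} z = \eta(x)$.

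The plan is to reduce this to a statement about points of $X$ via the induction principle of the modality, then discharge that statement using the hypothesis. First I would observe that $L_\I$ is a reflective subuniverse (a modality, in the sense of \cite{modalities}), hence identity types of $\I$-local types are themselves $\I$-local; in particular each fiber $z = \eta(x)$ of the goal is $\I$-local. By the induction principle for the localization, it therefore suffices to check the statement on points in the image of $\eta$, i.e.\ to prove $\eta(y) = \eta(x)$ for every $y : X$.

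Given such a $y$, the hypothesis supplies a map $f : \I \to X$ with $f(0) = x$ and $f(1) = y$. Composing with $\eta$ produces a map $\eta \circ f : \I \to L_\I(X)$. Because $L_\I(X)$ is $\I$-local, the canonical map $L_\I(X) \to L_\I(X)^\I$ is an equivalence; so $\eta \circ f$ is (propositionally) a constant map, and in particular $\eta(f(0)) = \eta(f(1))$. This yields $\eta(x) = \eta(y)$ as needed, completing the induction and hence showing $L_\I(X) = 1$.

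The one subtle point I expect to be the main obstacle is justifying the reduction step cleanly: one must know that identity types of modal types are modal, so that the induction principle of $L_\I$ applies to the family $z \mapsto (z = \eta(x))$. This is standard for reflective subuniverses \cite{modalities}, but it is the step that does the real work — after that, the argument is just ``a path in $X$ becomes a constant path in any $\I$-local type.''
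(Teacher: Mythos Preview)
Your proof is correct and follows essentially the same route as the paper: compose the given paths with the unit into $L_\I(X)$, use $\I$-locality to conclude $\eta(x)=\eta(y)$ for all $y:X$, and then invoke the modality's elimination principle to extend this to all of $L_\I(X)$. You are simply more explicit than the paper about why the elimination principle applies to the family $z\mapsto (z=\eta(x))$, namely that identity types of local types are local.
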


\begin{proof}
For all $y:X$ we get a map $g:\I\to X\to L_\I(X)$ such that $g(0) = [x]$ and $g(1)=[y]$. Since $L_\I(X)$ is $\I$-local this means that $\prod_{x:X}[*]=[x]$. We conclude $\prod_{y:L_\I(X)}[x]=y$ by applying the elimination principle for the modality.
\end{proof}

\begin{corollary}\label{R-I-contractible}
We have that $\R$ and $\mathbb{D}^2=\{x,y:\mathbb R\ \vert\ x^2+y^2\leq 1\}$ are $\I$-contractible.
\end{corollary}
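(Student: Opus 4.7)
The plan is to apply \Cref{continuously-path-connected-contractible} directly to each space by exhibiting an explicit basepoint and a continuous family of straight-line paths. Since the lemma reduces $\I$-contractibility to a purely metric/algebraic statement (existence of a path $\I\to X$ from a fixed basepoint to any point), no further homotopy-theoretic machinery is required.

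For $\R$, I would take the basepoint to be $0$. Given any $y:\R$, define $f:\I\to\R$ by $f(t) = t\cdot y$. Scalar multiplication $\I\times\R\to\R$ is a well-defined operation on the Cauchy (or Dedekind) reals, so $f$ is a legitimate function of types. It satisfies $f(0)=0$ and $f(1)=y$, so the hypothesis of \Cref{continuously-path-connected-contractible} holds, and $\R$ is $\I$-contractible.

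For $\mathbb{D}^2$, I would take the basepoint to be $(0,0)$, which lies in $\mathbb{D}^2$ since $0^2+0^2=0\leq 1$. Given $(y_1,y_2):\mathbb{D}^2$, define $f:\I\to\mathbb{D}^2$ by $f(t) = (ty_1,ty_2)$. To see this is well-typed I need $(ty_1)^2+(ty_2)^2\leq 1$; but $t:\I$ means $0\leq t\leq 1$, so $t^2\leq 1$ and hence $(ty_1)^2+(ty_2)^2 = t^2(y_1^2+y_2^2)\leq y_1^2+y_2^2\leq 1$. Again $f(0)=(0,0)$ and $f(1)=(y_1,y_2)$, so \Cref{continuously-path-connected-contractible} gives that $\mathbb{D}^2$ is $\I$-contractible.

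There is no real obstacle here; the only point worth mentioning is that the straight-line homotopies are genuine functions on the real-number types (not merely on rationals or on a Stone approximation), which is fine because we have already declared the reals and can freely use constructive analysis. The substantive content was packaged into \Cref{continuously-path-connected-contractible}, whose proof in turn rested on the cohomology computation \Cref{cohomology-I} via \Cref{BZ-I-local}.
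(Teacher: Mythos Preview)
Your proof is correct and matches the paper's implicit argument: the corollary is stated without proof immediately after \Cref{continuously-path-connected-contractible}, and the straight-line paths you give are exactly what is intended. One small correction to your closing remark: the proof of \Cref{continuously-path-connected-contractible} is a direct argument with the $\I$-localisation modality and does not depend on the cohomology computation \Cref{cohomology-I} or on \Cref{BZ-I-local}.
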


\begin{proposition}\label{shape-S1-is-BZ}
$L_\I(\R/\Z) = \B\Z$.
\end{proposition}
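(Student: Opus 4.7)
The plan is to construct the classifying map of the principal $\Z$-bundle $e:\R\twoheadrightarrow\R/\Z$, observe it factors through $L_\I(\R/\Z)$ because $\B\Z$ is $\I$-local, and then show this factorisation is an equivalence by a fibre computation using $\Sigma$-closure of the localisation modality.

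Concretely, I would define $\phi:\R/\Z\to\B\Z$ by sending each $[x]$ to the $\Z$-torsor of representatives $\{y:\R\ \vert\ y-x\in\Z\}$; this is well-defined and valued in $\B\Z$ because $\Z$ acts freely and transitively on the fibres of $e$ by translation. By the classifying property, $\mathrm{fib}_*(\phi)\simeq\R$ at the base-point $*:\B\Z$ (equivalently, $\R$ is recovered as the total space of the bundle associated to $\phi$). Since $\B\Z$ is $\I$-local by \Cref{BZ-I-local}, $\phi$ factors uniquely through the localisation unit as $\tilde\phi:L_\I(\R/\Z)\to\B\Z$.

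It remains to show $\tilde\phi$ is an equivalence. Writing $\R/\Z\simeq\Sigma_{t:\B\Z}\mathrm{fib}_t(\phi)$ and using that $L_\I$ is a $\Sigma$-closed modality in the sense of \cite{modalities} with the target $\B\Z$ already modal, localisation commutes with this decomposition to give $L_\I(\R/\Z)\simeq\Sigma_{t:\B\Z}L_\I(\mathrm{fib}_t(\phi))$, under which $\tilde\phi$ corresponds to the first projection. In particular, $\mathrm{fib}_*(\tilde\phi)\simeq L_\I(\R)\simeq 1$ by \Cref{R-I-contractible}. Since $\B\Z$ is $0$-connected, every fibre of $\tilde\phi$ is merely equivalent to this one, hence contractible (as contractibility is a proposition), so $\tilde\phi$ is an equivalence.

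The main obstacle is the fibre computation above: justifying that $L_\I$ commutes with the $\Sigma$ decomposition of $\R/\Z$ over the $\I$-local base $\B\Z$. This is the standard consequence of $\Sigma$-closure for a modality applied to a modal base — on each fibre the comparison map is literally the localisation unit, and such fibre-wise units assemble to a unit over the modal base — but it is the technical heart of the argument and relies on $L_\I$ being a modality (and not merely a reflective localisation), which is available because $\I$ is a pointed type.
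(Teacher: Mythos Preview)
Your proof is correct and takes essentially the same approach as the paper: both construct the classifying map $\R/\Z\to\B\Z$ from the $\Z$-torsor fibers of $\R\to\R/\Z$ and verify it is an $\I$-localisation by observing that $\B\Z$ is $\I$-local and, using connectedness of $\B\Z$, that the single fiber $\R$ is $\I$-contractible via \Cref{R-I-contractible}. You spell out the $\Sigma$-closure mechanism that makes the fiber check sufficient, which the paper leaves implicit; as a minor aside, nullification at \emph{any} type (pointed or not) yields a modality, so your closing caveat about $\I$ being pointed is unnecessary.
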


\begin{proof}
As for any group quotient, the fibers of the map $\R\to\R/\Z$ are $\Z$-torsors, se we have an induced pullback square:
\[
\begin{tikzcd}
\R\ar[r]\ar[d] & 1\ar[d] \\
\R/\Z\ar[r] & \B\Z
\end{tikzcd}
\]
Now we check that the bottom map is an $\I$-localisation. Since $\B\Z$ is $\I$-local by \cref{BZ-I-local}, it is enough to check that its fibers are $\I$-contractible. Since $\B\Z$ is connected it is enough to check that $\R$ is $\I$-contractible, but this is \cref{R-I-contractible}.
\end{proof}

\begin{remark}
By \cref{BZ-I-local}, for any $X$ we have that $H^1(X,\Z) = H^1(L_{\I}(X),\Z)$, so that by \cref{shape-S1-is-BZ} we have that $H^1(\R/\Z,\Z) = H^1(\B\Z,\Z) = \Z$.
\end{remark}

We omit the proof that $\mathbb{S}^1=\{x,y:\R\ \vert\ x^2+y^2=1\}$ is equivalent to $\R/\Z$.
The equivalence can be constructed using trigonometric functions, which exists by Proposition 4.12 in \cite{Bishop}.

\begin{proposition}
\label{no-retraction}
The map $\mathbb{S}^1\to \mathbb{D}^2$ has no retraction.
\end{proposition}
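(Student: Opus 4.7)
The plan is to run the standard cohomological obstruction argument, using the computations of $H^1$ that the paper has already established. Suppose for contradiction that $r : \mathbb{D}^2 \to \mathbb{S}^1$ is a retraction of the inclusion $i : \mathbb{S}^1 \to \mathbb{D}^2$, so that $r \circ i = \mathrm{id}_{\mathbb{S}^1}$. Applying the contravariant functor $H^1(-, \mathbb{Z})$ produces a factorisation
\[ \mathrm{id} : H^1(\mathbb{S}^1, \mathbb{Z}) \to H^1(\mathbb{D}^2, \mathbb{Z}) \to H^1(\mathbb{S}^1, \mathbb{Z}) \]
of the identity on $H^1(\mathbb{S}^1, \mathbb{Z})$ through $H^1(\mathbb{D}^2, \mathbb{Z})$.

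Next I would identify both groups using what has already been done. For $\mathbb{S}^1$: the paper notes $\mathbb{S}^1 \simeq \mathbb{R}/\mathbb{Z}$, and the remark following \Cref{shape-S1-is-BZ} gives $H^1(\mathbb{S}^1, \mathbb{Z}) = H^1(\mathbb{R}/\mathbb{Z}, \mathbb{Z}) = H^1(\mathrm{B}\mathbb{Z}, \mathbb{Z}) = \mathbb{Z}$. For $\mathbb{D}^2$: by \Cref{R-I-contractible} it is $\mathbb{I}$-contractible, i.e.\ $L_\mathbb{I}(\mathbb{D}^2) = 1$, and since $\mathrm{B}\mathbb{Z}$ is $\mathbb{I}$-local by \Cref{BZ-I-local}, we have
\[ H^1(\mathbb{D}^2, \mathbb{Z}) = H^1(L_\mathbb{I}(\mathbb{D}^2), \mathbb{Z}) = H^1(1, \mathbb{Z}) = 0. \]
Therefore the identity map $\mathbb{Z} \to \mathbb{Z}$ factors through the trivial group, which is absurd.

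There is no real obstacle here: every ingredient has been assembled in the preceding subsections. The only point to be slightly careful about is that $H^1(-, \mathbb{Z})$ is genuinely functorial on $X \to \mathrm{B}\mathbb{Z}$ (it is the set truncation of the post-composition map), so that $r \circ i = \mathrm{id}$ induces the identity on $H^1$; this is immediate from the definition of $H^1$ as the set truncation of the mapping space into $\mathrm{B}\mathbb{Z}$. The invariance $H^1(X, \mathbb{Z}) = H^1(L_\mathbb{I}(X), \mathbb{Z})$ is exactly the content of the remark following \Cref{shape-S1-is-BZ}, so the argument closes cleanly.
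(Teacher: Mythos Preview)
Your proof is correct and rests on the same ingredients (\Cref{R-I-contractible}, \Cref{BZ-I-local}, \Cref{shape-S1-is-BZ}) as the paper's. The only cosmetic difference is that the paper applies the functor $L_{\I}$ directly to the hypothetical retraction, obtaining a retraction of $\B\Z \to 1$ and hence that $\B\Z$ is contractible; your route via $H^1(-,\Z)$ reaches the equivalent absurdity that the identity on $\Z$ factors through the trivial group.
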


\begin{proof}
By \cref{R-I-contractible} and \cref{shape-S1-is-BZ} we would get a retraction of $\B\Z\to 1$, so $\B\Z$ would be contractible.
\end{proof}

\begin{theorem}[Intermediate value theorem]
  \label{ivt}
  For any $f: \I\to \I$ and $y:\I$ such that $f(0)\leq y$ and $y\leq f(1)$,
  there exists $x:\I$ such that $f(x)=y$.
\end{theorem}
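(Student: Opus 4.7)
The plan is to convert the existence of a root of $f(\cdot)=y$ into a closed proposition, reduce to its double negation, and derive a contradiction by exhibiting a non-constant decidable function $\I \to 2$.

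First, I set $A = \{x:\I \mid f(x)=y\}$. Since $\I$ is compact Hausdorff (\Cref{IntervalIsCHaus}) and equality on a compact Hausdorff space is closed (\Cref{StoneEqualityClosed} transported along any Stone cover via \Cref{CompactHausdorffClosed}), $A \subseteq \I$ is closed, using that $f$ is automatically continuous. By \Cref{InhabitedClosedSubSpaceClosedCHaus}, the proposition $\exists_{x:\I}\, f(x)=y$ is then closed, hence $\neg\neg$-stable. It therefore suffices to derive a contradiction from the assumption $\forall_{x:\I}\, f(x)\neq y$.

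Under this assumption, I define the closed subsets $P = \{x:\I \mid f(x) \geq y\}$ and $Q = \{x:\I \mid f(x) \leq y\}$. The real-number form of LLPO, available here by \Cref{LLPO} and standard in constructive reverse mathematics (\cite{HannesDiener}), gives $P \cup Q = \I$, while the assumption yields $P \cap Q = A = \emptyset$. By \Cref{CHausSeperationOfClosedByOpens}, we obtain disjoint opens $U \supseteq P$ and $V \supseteq Q$; since $P \cup Q = \I$, this forces $U \cup V = \I$, so $U$ and $V$ are complementary, and in particular $U$ is clopen. By \Cref{ClopenDecidable}, $U$ corresponds to a decidable subset, i.e.~to a function $D: \I \to 2$ with $D(0) = 0$ (since $0 \in Q \subseteq V$) and $D(1) = 1$ (since $1 \in P \subseteq U$).

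To conclude, I invoke $\I$-locality of $2$ (\Cref{Z-I-local}) to obtain that every map $\I \to 2$ is constant, contradicting $D(0)\neq D(1)$. The main potential obstacle is justifying the real-number form of LLPO, namely $r \leq 0 \vee r \geq 0$ for any real $r$, from the combinatorial \Cref{LLPO}; this is routine but must be spelled out. As a more elementary alternative to the final step, one could use dependent choice to build sequences $a_n, b_n : \I$ with $D(a_n)=0$, $D(b_n)=1$ and $b_n - a_n = 1/2^n$, converging to a common $x^*$, and then contradict the $\epsilon,\delta$-continuity of $D$ at $x^*$; this replaces the appeal to cohomology-derived $\I$-locality by pure bisection.
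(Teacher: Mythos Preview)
Your proof is correct and follows the same overall arc as the paper's: reduce to $\neg\neg$-stability via \Cref{InhabitedClosedSubSpaceClosedCHaus}, assume $f(x)\neq y$ everywhere, produce a non-constant map $\I\to 2$, and contradict \Cref{Z-I-local}.

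The one noteworthy difference is in how you obtain the decidable partition of $\I$. The paper proceeds more directly: from $f(x)\neq y$ and Markov's principle one gets apartness, and then the standard Bishop-style dichotomy $f(x)<y \lor y<f(x)$ immediately yields disjoint \emph{open} sets $U_0,U_1$ covering $\I$, without any appeal to LLPO or to normality. Your route---closed cover $P\cup Q=\I$ via real-number LLPO, then \Cref{CHausSeperationOfClosedByOpens}, then \Cref{ClopenDecidable}---is a legitimate detour through the compact-Hausdorff machinery of the paper, but it is strictly longer than needed here. Both arguments land in exactly the same place, so this is a matter of economy rather than substance. Your alternative bisection ending is also valid and has the virtue of avoiding the cohomological input behind \Cref{Z-I-local}, at the cost of a small amount of extra analysis.
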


\begin{proof}
  By \Cref{InhabitedClosedSubSpaceClosedCHaus}, the proposition $\exists x:\I.f(x)=y$ is closed and therefore $\neg\neg$-stable, so we can proceed with a proof by contradiction.
  If there is no such $x:\I$, we have $f(x)\neq y$ for all $x:\I$.
  It is a standard fact of constructive analysis \cite{Bishop}, that for different numbers $a,b:\I$, we have $a<b$ or $b<a$, so the following two sets cover $\I$:
  \[
    U_0:= \{x:\I\mid f(x)<y\} \quad\quad
    U_1:= \{x:\I\mid y<f(x)\}
    \]
  Since $U_0$ and $U_1$ are disjoint, we have $\I=U_0+U_1$ which allows us to define a non-constant function $\I\to 2$, which contradicts \Cref{Z-I-local}.
\end{proof}

\begin{theorem}[Brouwer's fixed-point theorem]
  For all $f:\mathbb{D}^2\to \mathbb{D}^2$ there exists $x:\mathbb{D}^2$ such that $f(x)=x$.
\end{theorem}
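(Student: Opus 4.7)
The strategy is the classical reduction to the no-retraction theorem \Cref{no-retraction}. First, observe that the statement we wish to prove is a closed proposition and therefore $\neg\neg$-stable: $\mathbb{D}^2$ is compact Hausdorff (it is a closed subspace of the compact Hausdorff square $[-1,1]^2$), and equality on $\mathbb{D}^2\subseteq \R^2$ is closed (equality of reals is a countable conjunction of the closed conditions $|x-y|\leq 1/n$). Hence $\{x:\mathbb{D}^2 \mid f(x)=x\}$ is a closed subset of $\mathbb{D}^2$, and by \Cref{InhabitedClosedSubSpaceClosedCHaus} its inhabitation is a closed proposition. So we may assume for contradiction that $f(x)\neq x$ for every $x:\mathbb{D}^2$, and aim at building a retraction $\mathbb{D}^2\to \mathbb{S}^1$ to contradict \Cref{no-retraction}.

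The retraction $r$ sends $x$ to the unique point on $\mathbb{S}^1$ lying on the ray from $f(x)$ through $x$. Writing $v(x) = x - f(x)$, set $r(x) = f(x) + t(x)\cdot v(x)$, where $t(x)\geq 1$ is the larger root of
\[
\|v(x)\|^2\, t^2 + 2\,(f(x)\cdot v(x))\, t + \|f(x)\|^2 - 1 = 0.
\]
At $t=1$ this expression equals $\|x\|^2-1\leq 0$, and it tends to $+\infty$ as $t$ grows. The intermediate value theorem \Cref{ivt}, applied to the continuous map $t\mapsto \|f(x)+t\cdot v(x)\|^2$ on a sufficiently large interval, therefore yields such a $t(x)\geq 1$, and the positive leading coefficient $\|v(x)\|^2$ makes it the unique root in $[1,\infty)$. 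For $x\in\mathbb{S}^1$ we already have $\|f(x)+1\cdot v(x)\|=\|x\|=1$, so $t(x)=1$ and $r(x)=x$, giving the retraction property.

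The main obstacle is constructively extracting the apartness $\|v(x)\|^2 > 0$ from the hypothesis $f(x) \neq x$: without it the quadratic degenerates, the square root in the explicit formula for $t(x)$ may fail to exist, and the construction collapses. This is where Markov's principle plays its role. Since the equality $f(x) = x$ in $\R^2$ is equivalent to $\forall_{n:\N}\; \|v(x)\|^2 \leq 1/n$, a countable conjunction of closed propositions, \Cref{ClosedMarkov} yields from its negation a witness $n$ with $\|v(x)\|^2 > 1/n$, so that the discriminant $(f(x)\cdot v(x))^2 - \|v(x)\|^2(\|f(x)\|^2 - 1)$ is strictly positive and the quadratic formula produces a well-defined $t(x)$, depending on $x$ through composition of already-available operations on reals. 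The resulting $r:\mathbb{D}^2\to \mathbb{S}^1$ retracts $\mathbb{S}^1\hookrightarrow\mathbb{D}^2$, contradicting \Cref{no-retraction} and closing the proof by contradiction.
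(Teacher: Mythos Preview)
Your proof is correct and follows essentially the same route as the paper's: reduce to \Cref{no-retraction} by using \Cref{InhabitedClosedSubSpaceClosedCHaus} to argue by contradiction, then construct the standard ray-to-boundary retraction once $x-f(x)$ is known to be apart from $0$. You are in fact more explicit than the paper about the one genuinely constructive point---deriving $\|v(x)\|^2>0$ from $f(x)\neq x$ via Markov's principle---which the paper compresses into the bare assertion that ``one of the coordinates of $d_x$ is invertible''; the appeal to \Cref{ivt} is not needed once you invoke the quadratic formula, but it does no harm.
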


\begin{proof}
  As above, by \Cref{InhabitedClosedSubSpaceClosedCHaus}, we can proceed with a proof by contradiction,
  so we assume $f(x)\neq x$ for all $x:\mathbb{D}^2$.
  For any $x:\mathbb{D}^2$ we set $d_x= x-f(x)$, so we have that one of the coordinates of $d_x$ is invertible.
  Let $H_x(t) = f(x) + t\cdot d_x $ be the line through $x$ and $f(x)$.
  The intersections of $H_x$ and $\partial\mathbb{D}^2=\mathbb{S}^1$ are given by the solutions of an equation quadratic in $t$. By invertibility of one of the coordinates of $d_x$, there is exactly one solution with $t> 0$.
  We denote this intersection by $r(x)$ and the resulting map $r:\mathbb D^2\to\mathbb S^1$ has the property that it preserves $\mathbb{S}^1$.
  Then $r$ is a retraction from $\mathbb{D}^2$ onto its boundary $\mathbb{S}^1$, which is a contradiction by \Cref{no-retraction}.
\end{proof}

\printbibliography

\end{document}